\newtheorem{theorem}{Theorem}[section]
\newtheorem{lemma}[theorem]{Lemma}
\newtheorem{proposition}[theorem]{Proposition}
\newtheorem{corollary}[theorem]{Corollary}
\theoremstyle{definition}
\newtheorem{definition}[theorem]{Definition}
\renewcommand{\leq}{\leqslant}
\renewcommand{\geq}{\geqslant}
\newcommand\SL{\operatorname{SL}}
\newcommand\GL{\operatorname{GL}}
\newcommand\Sp{\operatorname{Sp}}
\newcommand\adj{\operatorname{adj}}
\newcommand\Mat{\operatorname{Mat}}
\newcommand\Supp{\operatorname{Supp}}
\newcommand\Res{\operatorname{Res}}
\newcommand\PSL{\operatorname{PSL}}
\newcommand\U{\operatorname{U}}
\newcommand\Heis{\operatorname{H}}
\newcommand{\pmat}[4]{\begin{pmatrix} #1 & #2 \\ #3 & #4\end{pmatrix}}
\newcommand{\smallpmat}[4]{\big(\begin{smallmatrix} #1 & #2 \\ #3 & #4\end{smallmatrix}\big)}
\def\F{\mathbf{F}}
\def\R{\mathbf{R}}
\def\C{\mathbf{C}}
\def\Z{\mathbf{Z}}
\def\E{\mathbb{E}}
\def\Q{\mathbf{Q}}
\def\N{\mathbf{N}}
\def\T{\mathbf{T}}
\def\eps{\varepsilon}
\newcommand{\md}[1]{\ensuremath{(\operatorname{mod}\, #1)}}
\newcommand{\mdsub}[1]{\ensuremath{(\mbox{\scriptsize mod}\, #1)}}
\newcommand{\mdsublem}[1]{\ensuremath{(\mbox{\scriptsize \textup{mod}}\, #1)}}
\numberwithin{equation}{section}
\begin{document}

% \title[short text for running head]{full title}
\title[Quadratic forms in 8 prime variables]{Quadratic forms in 8 prime variables}

%    Only \author and \address are required; other information is
%    optional.  Remove any unused author tags.

%    author one information
% \author[short version for running head]{name for top of paper}
\author{Ben Green}
\address{Mathematical Institute\\
Radcliffe Observatory Quarter\\
Woodstock Road\\
Oxford OX2 6GG\\
England}
\email{ben.green@maths.ox.ac.uk}
\thanks{The author is supported by a Simons Investigator grant and is grateful to the Simons Foundation for their continued support.}

%\onehalfspace
%    \subjclass is required.
%\subjclass[2000]{Primary: 11B32, 11P55. Secondary: 20C33.}
%    The 2010 edition of the Mathematics Subject Classification is
%    now available.  If you are citing a classification from the
%    new scheme, use the following input coding instead.
%\subjclass[2010]{Primary }

\begin{abstract}
We give an asymptotic for the number of prime solutions to $Q(x_1,\dots, x_8) = N$, subject to a mild non-degeneracy condition on the homogeneous quadratic form $Q$. 

The argument initially proceeds via the circle method, but this does not suffice by itself. To obtain a nontrivial bound on certain averages of exponential sums, we interpret these sums as matrix coefficients for the Weil representation of the symplectic group $\Sp_8(\Z/q\Z)$. Averages of such matrix coefficients are then bounded using an amplification argument and a convergence result for convolutions of measures, which reduces matters to understanding the action of certain 12-dimensional subgroups in the Weil representation. Sufficient understanding can be gained by using the basic represention theory of $\SL_2(k)$, $k$ a finite field.

\end{abstract}

\maketitle

\tableofcontents

\section{Introduction}

Let $Q : \Z^8 \rightarrow \Z$ be a homogeneous quadratic form with integer coefficients, and let $N \in \Z$. We will study solutions to $Q(x_1,\dots, x_8) = N$ with the $x_i$ prime.

Throughout the paper, it is convenient to split $\Z^8 = \Z^4 \times \Z^4$ and to write 
\[ Q(x,y) = x^T a x + x^T b y + y^T c y\] where $a,b,c \in \Mat_4(\Z)$ with $a,c$ symmetric. Equivalently, 
\[ Q(x,y) = (x^T, y^T) \pmat{a}{b/2}{b^T/2}{c}\binom{x}{y}  .\]
Assume that $b$ is invertible. An important role will be played by the 4-by-4 matrix
\begin{equation}\label{delta-def} \Delta := 4b^{-1} a b^{-T} c - I .\end{equation}
This formally resembles the discriminant (of a form in two variables), only it is matrix-valued.

We turn now to the main theorem of the paper. Here, $\Lambda$ is the von Mangoldt function. For $q$ a positive integer, $\Lambda_{\Z/q\Z} : \Z/q\Z \rightarrow \C$ takes the value $q/\phi(q)$ when $(x,q) = 1$, and $0$ otherwise. As the notation suggests, this is the natural local variant of the von Mangoldt function. The normalisation is chosen so that the average value of $\Lambda_{\Z/q\Z}$ is 1. We abuse notation by writing $\Lambda_{\Z/p\Z}$ instead of $\Lambda_{\Z/p^n\Z}$ (the domain will always be clear from context). Finally, we write $\Lambda^{\otimes 4}(x) = \Lambda(x_1)\Lambda(x_2)\Lambda(x_3)\Lambda(x_4)$ for $x \in \Z^4$, and $\Lambda_{\Z/q\Z}^{\otimes 4}$ is defined analogously.

\begin{theorem}\label{mainthm}
Suppose that $Q$ is a quadratic form for which $\det ab \neq 0$ and $\Delta$ has four distinct eigenvalues which lie in $\overline{\Q} \setminus \{-1,0\}$. Then we have the expected local-global estimate for the number of solutions to $Q(x,y) = N$ in primes, namely for any real $A$ \[ \sum_{\substack{x,y \in [X]^4 \\ Q(x, y) = N}} \Lambda^{\otimes 4}(x) \Lambda^{\otimes}(y)  =  \mathfrak{S}(N) X^6  + O_{A,Q}(X^6 \log^{-A} X).\]
Here
\begin{equation} \mathfrak{S}(N) = \beta_{\infty} \prod_p \beta_p(N) \end{equation}
where $\beta_p(N) = \lim_{n \rightarrow \infty} \beta_{p,n}(N)$ is the $p$-adic density of solutions, where
\begin{equation}\label{beta-p}
\beta_{p,n}(N) := p^{-7n} \sum_{\substack{ x, y \in (\Z/p^n \Z)^4 \\  Q(x, y) \equiv 0 \mdsublem{p^n}}} \Lambda^{\otimes 4}_{\Z/p \Z}(x) \Lambda^{\otimes 4}_{\Z/p \Z}(y),
\end{equation}
and \[ \beta_{\infty} :=   \lim_{\delta \rightarrow 0} \frac{1}{2\delta} \mu_{\R^8} \{ x,y \in [0,1]^4 :  |Q(x,y) - \frac{N}{X^2}| \leq \delta\}\] is an archimedean measure of the density of \textup{(}positive, real\textup{)} solutions. Included in the statements is the fact that the limit in the definition of the $p$-adic density \eqref{beta-p} exists. 
\end{theorem}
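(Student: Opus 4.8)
The plan is to run the Hardy--Littlewood circle method with von Mangoldt weights. Put $S(\alpha) = \sum_{x,y\in[X]^4}\Lambda^{\otimes 4}(x)\Lambda^{\otimes 4}(y)\, e(\alpha Q(x,y))$, so that the quantity to be estimated is $\int_{\R/\Z} S(\alpha)e(-\alpha N)\,d\alpha$, and split the circle into major arcs around fractions $a/q$ with $q \le (\log X)^B$ and a minor-arc remainder $\mathfrak{m}$, where $B = B(A)$ is large. On the major arcs one decomposes each of the eight copies of $\Lambda$ by Vaughan's identity, or simply applies the Siegel--Walfisz theorem to the sums of $\Lambda$ over residue classes mod $q$, and then evaluates the resulting smooth sums by Poisson summation; this produces the main term, which factorises as the singular integral $\beta_\infty$ times the truncated Euler product $\prod_{p\le(\log X)^B}\beta_p(N)$. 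The tail of the product is controlled by the estimate $\beta_p(N) = 1 + O(p^{-1-\delta})$, which for a form in eight variables follows from a crude bound for the complete exponential sum $\sum_{x,y\bmod p^n}\Lambda_{\Z/p\Z}^{\otimes 4}(x)\Lambda_{\Z/p\Z}^{\otimes 4}(y)\,e_{p^n}(tQ(x,y))$; here the hypothesis $\det(ab)\neq 0$ is what guarantees that $Q\bmod p$ has large rank for all but finitely many $p$, so that these sums exhibit essentially square-root cancellation. The existence of the $p$-adic density $\beta_p(N) = \lim_n\beta_{p,n}(N)$ follows from the same input: isolating the top-level frequency shows $\beta_{p,n+1}(N)-\beta_{p,n}(N) = O(p^{-cn})$, so the sequence is Cauchy.

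Everything therefore comes down to the minor-arc bound $\int_{\mathfrak{m}}|S(\alpha)|\,d\alpha = O\bigl(X^6(\log X)^{-A}\bigr)$. No pointwise estimate for $S(\alpha)$ is available on $\mathfrak{m}$: a single Weyl or van der Corput differencing of the quadratic $Q$ linearises it, but at the price of a generic linear form in prime variables, and it is precisely this loss that the representation-theoretic input is designed to avoid. Instead I would, after a Vaughan decomposition and a Cauchy--Schwarz step in a long variable, complete the remaining $x,y$ sums to full residue systems modulo the relevant denominator $q$. This reduces the minor-arc mass to a weighted average over moduli $q$ of quantities of the shape $q^{-c}\sum_{(a,q)=1}\bigl|\sum_{x,y\bmod q}\Lambda_{\Z/q\Z}^{\otimes 4}(x)\Lambda_{\Z/q\Z}^{\otimes 4}(y)\,e_q(aQ(x,y))\bigr|$. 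Reading the $(4+4)$-variable splitting of $Q$ as a Schr\"odinger-model recipe --- multiply by the chirp $e_q(a\,x^Tax)$, Fourier transform along the pairing $b$, multiply by the chirp $e_q(a\,y^Tcy)$ --- identifies the inner complete sum with a matrix coefficient $\langle\rho_q(g_a)\Lambda_q,\Lambda_q\rangle$ of the Weil representation $\rho_q$ of $\Sp_8(\Z/q\Z)$ on $L^2\bigl((\Z/q\Z)^4\bigr)$, where $a\mapsto g_a$ is an explicit one-parameter family (a torus direction in $\GSp_8$) attached to $Q$ and $\Lambda_q = \Lambda_{\Z/q\Z}^{\otimes 4}$ is the corresponding distinguished vector.

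To bound these averages of matrix coefficients I would amplify: the $k$-th power of the average over $a$ of $\langle\rho_q(g_a)\Lambda_q,\Lambda_q\rangle$ is, up to harmless manipulation, $\langle\rho_q(\mu_q^{*k})\Lambda_q,\Lambda_q\rangle$, where $\mu_q$ is the pushforward of the uniform measure on the family $\{g_a\}$. It therefore suffices to prove a quantitative mixing statement: $\mu_q^{*k}$ converges, with a rate in $k$ uniform in $q$, to Haar measure on the subgroup it generates. By the Chinese Remainder Theorem this reduces to prime-power moduli, and this is where the hypotheses on $\Delta$ enter: when $\Delta$ has four distinct eigenvalues, none equal to $0$ or $-1$, a symplectic change of variables decouples the $\Sp_8$ problem into (essentially) four rank-one pieces, each governed by a copy of $\SL_2$ over a finite field $k$ --- whence the ``$12 = 4\times 3$'' dimension count for the relevant subgroups --- and the excluded values $\{0,-1\}$ are exactly those at which a piece would degenerate. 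On each such piece the decomposition of the Weil representation is explicit, and the required convergence rate follows from the classification of irreducible representations of $\SL_2(k)$ together with the standard bounds on their dimensions and character values. Feeding the resulting power saving back through the $q$-average, and dispatching the finitely many small moduli and exceptional primes via the major-arc analysis, completes the argument. The principal obstacle is precisely this uniform-in-$q$ mixing estimate for $\mu_q^{*k}$: one must verify both that the convolution powers equidistribute at the necessary rate and that the subgroup actually generated is the tractable $12$-dimensional one rather than something larger, and it is to make the latter true that the non-degeneracy conditions on $Q$ are imposed.
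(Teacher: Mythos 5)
Your high-level architecture is the right one and matches the paper closely: a circle-method split, a classical major-arc analysis, a reduction of the hard minor arcs to complete exponential sums mod $q$, the identification of those sums as matrix coefficients of the Weil representation of $\Sp_8(\Z/q\Z)$ (via exactly the chirp--Fourier--chirp factorisation you describe), a CRT reduction to prime moduli, the appearance of $\SL_2(\F_p[\Delta])$, and the observation that $\{0,-1\}$ are the degenerate eigenvalues. The amplification idea is also the right one. But there are several genuine gaps in how you propose to execute this.

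First, the amplification is not as harmless as you suggest. You want to bound an average $\int |\langle v, \rho(g) w\rangle|\,d\mu(g)$ with the absolute value inside. The identity ``$k$-th power of the average equals $\langle \rho(\mu^{*k})v, v\rangle$'' holds only without the absolute value, and with the absolute value the correct statement, obtained by an iterated Cauchy--Schwarz (introduce the phase $\xi(g) = e(\arg\langle v, \rho(g)w\rangle)$, square, and use unitarity repeatedly), is
\[
\Big(\int |\langle v,\rho(g)w\rangle|\,d\mu(g)\Big)^{2m} \;\leq\; \int |\langle w, \rho(g) w\rangle|\,d\mu^{(2m)}(g),
\]
where $\mu^{(2m)} = \mu^{\circ}\ast\mu\ast\mu^{\circ}\ast\cdots\ast\mu$ is the \emph{symmetrised} $2m$-fold convolution, and one of the two vectors has disappeared. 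This matters: the group your convolution powers equidistribute on is the one generated by $\Supp(\mu^{\circ}\ast\mu)$, i.e.\ by the elements $g^{-(r)}g^{(s)}$, not by the $g^{(r)}$ themselves; the two groups are a priori different.

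Second, and more seriously, you conflate two logically distinct ingredients. After the amplification one needs \textup{(i)} quantitative convergence of $\mu^{(2m)}$ to the uniform measure on the subgroup $\Gamma_q$ it generates, and \textup{(ii)} a quasirandomness statement that the Weil representation restricted to $\Gamma_q$ has no irreducible constituents of low dimension. These are used together but proved by completely different methods. You propose to get both from ``the classification of irreducible representations of $\SL_2(k)$ together with the standard bounds on their dimensions and character values'', but that is exactly what step (ii) needs and does \emph{not} give you (i): knowing the irreducible dimensions of $\SL_2(k)$ says nothing about whether your particular algebraically-parametrised measure $\mu_q$ has a spectral gap. Proving (i) requires either expander-type technology (e.g.\ in the style of Varj\'u) or, as the paper does, an argument of a quite different flavour: one interprets $\Vert\mu_p^{(2^j)}\Vert^2$ as the number of $\F_p$-points on a variety of bounded complexity, applies Lang--Weil, and observes that the resulting normalised counts take discretised values, forcing the non-increasing sequence $\Vert\mu_p^{(2^j)}\Vert$ to stabilise after $O(1)$ steps; additive-combinatorial stability of Young's inequality then yields a subgroup $H$ on which $\mu^{(2^t)}$ is near-uniform. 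None of this is in your sketch, and the representation-theoretic route you gesture at would require an independent spectral-gap input that you have not supplied.

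Third, you correctly flag the identification of $\Gamma_q$ as ``the principal obstacle'' but give no argument, and this is in fact a substantial piece of work: one must show that the conjugated generators $\tau g^{-(r)}g^{(s)}\tau^{-1}$ generate \emph{all} of $\SL_2(\F_p[\Delta])$, which requires (a) an $\SL_2$-over-a-field computation ruling out containment in Borel or subfield subgroups (this is where the exclusion of $\theta\in\{0,-1\}$ actually bites, via an explicit polynomial-in-$(r,s)$ identity and Schwartz--Zippel), and (b) a Goursat--Ribet argument over the product $\prod_i \SL_2(\F_p(\alpha_i))$ to rule out the generated group sitting diagonally. Your ``four rank-one pieces'' picture is correct only when $\overline{\rho}_\Delta$ splits completely over $\F_p$; in general the factors are $\SL_2(\F_{p^{n_i}})$ for a partition of $4$, and the Goursat step has to deal with all of them.

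Finally, two smaller points. On the minor arcs the paper makes the estimate for \emph{arbitrary} $1$-bounded test functions $F_1, F_2$ and then later wraps them into periodic pieces; the exponential-sum bound is an operator bound on $\ell^2((\Z/q\Z)^4)$, uniform in the test vectors, and this is both cleaner and what actually emerges from the reduction. Attempting to reduce to the specific vector $\Lambda_{\Z/q\Z}^{\otimes 4}$ as you propose is not what a Cauchy--Schwarz-then-complete manoeuvre produces, and anyway throws away the uniformity you need. And the quasirandomness input is not merely ``no small irreducibles'': by the $\SL_2$ theory the only small constituent could be the trivial one, so one must show the Weil representation restricted to $\Gamma_p$ has no nontrivial invariant vector; this is proved by a separate, rather concrete support/uncertainty argument, and it is where the $0$-eigenvalue exclusion on $\Delta$ is used.
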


Throughout the paper, we will say that a form $Q$ is \emph{generic} if it satisfies the conditions of this theorem, that is to say if $\det ab \neq 0$ and if $\Delta$ has four distinct eigenvalues in $\overline{\Q}$, and that neither $0$ nor $-1$ is one of these eigenvalues. The word generic is appropriate, since this condition holds for a Zariski-dense set of $(a,b,c)$ in the 36-dimensional parameter space where $a, c$ are symmetric. To see this, first note that the condition that $\Delta$ has distinct eigenvalues is (Zariski-)closed, by considering the resultant of the characteristic polynomial $\rho_{\Delta}(\lambda) := \det(\Delta - \lambda I)$ and its derivative $\rho'_{\Delta}(\lambda)$. The conditions that $\det ab = 0$, and that $\Delta$ has an eigenvalue $0$ or $-1$, are evidently closed conditions. Finally, these conditions are nontrivial (i.e. not always satisfied) as one can see by taking $a$ to be a diagonal matrix with distinct rational eigenvalues (not $0$ or $\frac{1}{4}$) and $b = c = I$. 

Note that $\Delta$ is not canonically associated to $Q$, being dependent on the splitting of variables into two sets of four. However, one may observe that if $a$ is invertible then
\[  \pmat{a}{b/2}{b^T/2}{c} = \pmat{1}{0}{A^T}{1} \pmat{B}{0}{0}{C} \pmat{1}{A}{0}{1}\] with $A = \frac{1}{2} a^{-1} b$, $B = a$, $C = \frac{1}{4} b^T a^{-1} b \Delta$, and so 
\begin{equation}\label{m-delta}\det Q := \det \pmat{a}{b/2}{b^T/2}{c} = 2^{-16} (\det b)^2 \det \Delta.\end{equation} (This is also true without the assumption that $a$ is invertible by a continuity argument.) Therefore the requirement that $0$ is not an eigenvalue of $\Delta$ is essentially an invariant of $Q$, more-or-less equivalent to $\det Q \neq 0$ (called the ``regular'' case in previous works such as \cite{zhao}). We do not expect any variant of our methods to handle the irregular case $\det Q = 0$, which essentially corresponds to forms in 7 or fewer variables. \vspace*{6pt}

\emph{Previous results.} Jianya Liu \cite{liu} handled generic quadratic forms in 10 prime variables.  Lilu Zhao \cite{zhao} subsequently handled all regular quadratic forms in 9 variables. These works use fairly classical forms of the Hardy-Littlewood circle method. For reasons we will go into later, 9 variables appears to be the limit of what any such method can give, and so far as I am aware results in 8 or fewer variables are known only for particular types of form with some degenerate and/or diagonal behaviour. For diagonal forms,  5 variables suffice by work of Hua \cite{hua}. If one is content with almost-primes instead of primes, the number of variables can be reduced to $3$: see \cite{bgs} and subsequent works.
\vspace*{6pt}

\emph{Future work.} In our main theorem we imposed conditions on $Q$, namely that $\det ab \neq 0$ and the matrix $\Delta$ has four distinct eigenvalues in $\overline{\Q} \setminus \{0, -1\}$. Whilst these are not especially severe restrictions, it nonetheless seems to be of interest to weaken them as far as possible, and we intend to address this in future work. 

There are at least three paths to pursue in this direction. First, there are 35 essentially different ways to split $8$ variables into two groups of $4$, which one would expect to lead to $\Delta$s with different properties. Second, many of the arguments of the paper can be modified to work in more degenerate situations. Finally, in some highly degenerate situations Theorem \ref{mainthm} can be established by classical methods such as those in \cite{zhao}. One would expect this to be the case when $\det b = 0$ for all splittings of the 8 variables (the case of low ``off-diagonal rank'').  We anticipate this to be a somewhat tedious endeavour, with all the main ideas already present in this paper and \cite{zhao}.

On a different matter, allowing linear terms in $Q$ (that is, nonhomogeneous quadratics) is probably possible but seems to require a fairly significant modification of the method, and we will not attempt this here.\vspace*{6pt}

\emph{Notation.} Most of our notation is fairly standard. We write $e(t) = e^{2\pi i t}$, and for $q$ a positive integer we write $e_q(x) := e^{2\pi i x/q}$. If $S$ is a finite set and $F : S \rightarrow \C$ a function, we write $\E_{x \in S} F(x)$ to mean the average of $F$ over $S$. We write $\T = \R/\Z$, and we write $\Vert x \Vert_{\T}$ for the distance from $x$ to the nearest integer. We write $[X]$ to denote the discrete interval $\{1,\dots, X\}$, and $[0,X]$ for the continuous interval $\{ x : 0 \leq x \leq X \} \subset \R$.

If $G$ is a finite group then we write $\ell^2(G)$ for the vector space of all functions $f : G \rightarrow \C$ together with the inner product $\langle f_1, f_2\rangle := \E_{x \in G} f_1(x) \overline{f_2(x)}$ and the associated norm $\Vert f \Vert_2 := \sqrt{\langle f, f\rangle}$. Later on in the paper we will also define norms of probability measures,  and we caution the reader that there we will use a different normalisation.  

If $V$ is a Hermitian inner product space (such as $\ell^2(G)$) then we write $\U(V)$ for the group of unitary transformations of $V$.

For $p$ a prime, we will freqently encounter the group $\Z/p\Z$. When this arises as a group or a ring, we will write it $\Z/p\Z$, but when it is important that it is a field, we will write $\F_p$. This may seems slightly eccentric, but it does not seem stylistically correct to talk about homomorphisms from $\Z/q\Z$ to $\F_p$ (when $p | q$) and nor does it seem right to discuss the field $(\Z/p\Z)(\theta)$ or the algebraic closure $\overline{\Z/p\Z}$. At times the distinction is somewhat arbitrary. 

Throughout the paper we abuse notation in certain standard ways which should not cause any confusion. For example, we also write $Q(x,y)$ for the quadratic form $x^T a x + x^T b y + y^T c y$ over $\Z/q\Z$, by which we mean that $x,y \in (\Z/q\Z)^4$ and $a,b$ and $c$ are to be considered $\md{q}$. Slightly more subtly, we also consider $\Delta$ (defined in \eqref{delta-def} as an element of $\Mat_4(\Q)$) as an element of $\Mat_4(\Z/p\Z)$, which makes sense provided $p \nmid \det b$. Similarly, we consider a certain 8-by-8 symplectic matrix $g$ (defined in \eqref{g-def-prelim} below) as an element of $\Sp_8(\Z/q\Z)$ for squarefree $q$, which again will make sense provided no prime factor of $q$ divides $\det b$.

Finally, we regard the quadratic form $Q$ as fixed throughout the paper and will not explicitly indicate dependence on $Q$ in asymptotic notation such as $\ll$ or $O()$.

\vspace*{6pt}

\emph{Acknowledgements.} It is a pleasure to thank Emmanuel Breuillard, Charlotte Chan, Tom Fisher and Bal\'azs Szendr\H{o}i for helpful correspondence related to this work and earlier versions of it, and Roger Baker and James Maynard for discussions which introduced me to the problem in around 2014. The author is a Simons Investigator and is very grateful to the Simons Foundation for their continued support.

\section{Outline of the argument}

The initial steps of the argument proceed in the classical fashion using the circle method, which we set up in Section \ref{sec3}. We introduce the exponential sum 
\[ S(\theta) := \sum_{x, y\in [X]^4}  \Lambda^{\otimes 4}(x)\Lambda^{\otimes 4}(y) e(\theta Q(x,y)),\]
where $\Lambda^{\otimes 4}(x)$ is shorthand for $\Lambda(x_1) \Lambda(x_2)\Lambda(x_3)\Lambda(x_4)$, and of course $\Lambda$ denotes the von Mangoldt function. Then by orthogonality we have
\[ \sum_{\substack{x,y \in [X]^4 \\ Q(x, y) = N}} \Lambda^{\otimes 4}(x)\Lambda^{\otimes 4}(y) = \int_{\T} S(\theta) e(-\theta N) d\theta.\]
We divide $\T$ into the major arcs $\mathfrak{M}$ (roughly, the set of $\theta$ within distance $\sim X^{-2}$ from a rational $\frac{a}{q}$ with $q \ll \log^{O(1)} X$) and the minor arcs $\mathfrak{m}$. The major arcs give the main term in the asymptotic, and the analysis of them is entirely classical. We give this analysis in Section \ref{sec4}, referring to \cite{zhao} for the details when possible.

For the minor arcs, the fact that we are discussing primes is essentially irrelevant and the same arguments work with $\Lambda^{\otimes 4}(x)\Lambda^{\otimes 4}(y)$ replaced by $F_1(x) F_2(y)$ for any reasonably bounded functions $F_1, F_2$. We in fact divide the minor arcs into two sets $\mathfrak{m}_1$ and $\mathfrak{m}_2$, with $\mathfrak{m}_1$ being points not too close to a rational and $\mathfrak{m}_2$ being the points very close to a rational (but with moderately large denominator). The precise definitions are given at the start of Section \ref{sec3}. The treatment of the integral over $\mathfrak{m}_1$ uses diophantine approximation arguments standard in the area, and is given in Section \ref{sec5}.

The treatment of the minor arcs $\mathfrak{m}_2$ is the heart of the paper. One may reduce to considering actual rational points $\frac{r}{q}$, with $q > \log^C X$ moderately large, and one is then led naturally led to look at exponential sums of the form
\begin{equation}\label{tf1f2} T_{f_1, f_2}(r) := q^2 \E_{x,y \in (\Z/q\Z)^4} f_1(x) f_2(y) e_q(r Q(x,y)),\end{equation} where here $r \in (\Z/q\Z)^*$.
There is a ``trivial'' upper bound of $1$ for such sums when $\Vert f_1 \Vert_{2}, \Vert f_2 \Vert_2 \leq 1$, which turns out to be (just) not good enough for the purposes of bounding the integral over $\mathfrak{m}_2$. However, any improvement of it by a factor $q^{-\delta}$ would suffice.

Unfortunately, there is no such improvement: the trivial bound is best possible. However, by a less wasteful reduction we can arrange things so that we consider instead the averages
\begin{equation}\label{t-avg} \frac{1}{\phi(q)}\sum_{r \in (\Z/q\Z)^*} |T_{f_1, f_2}(r)|.\end{equation}
Again, a saving of $q^{-\delta}$ over the trivial bound of $1$ would be enough. 

We incorporate some additional tricks which allow us to restrict attention to the case $q$ squarefree and without very small prime factors, two features which are vital in our later arguments. The details of these reductions are given in Section \ref{sec6}.

The remainder of the paper is devoted to establishing a nontrivial bound of the required strength for averages \eqref{t-avg}. To make progress on this problem, we interpret the exponential sums $T_{f_1, f_2}(r)$ as matrix coefficients $\langle f_1, \rho(g^{(r)}) f_2\rangle$, where here $\rho : \Sp_8(\Z/q\Z) \rightarrow \U(\ell^2((\Z/q\Z)^4))$ is a certain unitary representation of the symplectic group $\Sp_8$ over $\Z/q\Z$ called the Weil representation. After a brief introduction to the symplectic group and the Weil representation, we give this interpretation in Section \ref{sec7}. Whilst the theory of the Weil representation is well-known over $\R$ and somewhat well-known over finite fields, we do not know of a good source for the theory we need over $\Z/q\Z$, so we must develop some of this ourselves. This is fairly straightforward given the finite field statements, and is done in Appendix \ref{weil-app}.

The elements $g^{(r)} \in \Sp_8(\Z/q\Z)$ are what we call ``dilates'' of a single element $g = g^{(1)}$ given by the formula
\begin{equation}\label{g-def-prelim} g := \pmat{-2b^{-T} c}{b^{-T}}{4ab^{-T}c - b}{-2ab^{-T}}.\end{equation} The \emph{dilate} of $\smallpmat{A}{B}{C}{D} \in \Sp_8(\Z/q\Z)$ by $r \in (\Z/q\Z)^*$ is $\smallpmat{A}{r^{-1} B}{rC}{D}$; this is in fact an action of $(\Z/q\Z)^*$ by automorphisms, as may be easily checked. 

One is therefore led to the question of bounding an average of matrix coefficients $|\langle f_1, \rho(g^{(r)}) f_2\rangle|$, where $r$ ranges over $(\Z/q\Z)^*$. 

In Section \ref{sec8} we supply a general tool for bounding averages of matrix coefficients, in principle applicable to any unitary representation $\psi$ of any finite group $G$. This allows one to bound an average \[ \int_G |\langle f_1, \psi(x) f_2\rangle| d\mu(x),\] where $\mu$ is a probability measure on $G$, when two conditions are satisfied:
\begin{enumerate}
\item (convergence to uniform measure) Some symmetrised convolution power $\mu^{\circ} \ast \mu \ast \mu^{\circ} \ast \mu \cdots$ of bounded order should be close to the uniform measure on a subgroup $H \leq G$;
\item (quasirandomness) $\psi|_H$ has no low-dimensional irreducible components.
\end{enumerate}
We wish to apply this tool with $G = \Sp_8(\Z/q\Z)$, $\psi = \rho$ being the Weil representation, and $\mu$ being the uniform measure on the $\phi(q)$ points $g^{(r)}$, $r \in (\Z/q\Z)^*$. To do this we need to establish the convergence and quasirandomness properties.

The task (1) of showing that (symmetrised) convolution powers of $\mu$ converge to a uniform measure on a subgroup suggests the literature on the affine sieve, expanders and general measure convolutions in groups of Lie type, in particular the work of Varj\'u \cite{varju} which provides results in the appropriate generality. Some variant of this can probably be made to work in our context. However, our particular measure $\mu$ has a rather algebraic definition, being parametrised by (very simple) rational functions and we are able to offer an alternative approach using the Lang-Weil estimate. This is inspired by a blog post of Tao \cite{tao-spectral}, giving an alternative proof (inspired by model-theoretic work of Pillay and Starchenko \cite{pillay-starchenko} and unpublished notes of Hrushovski) of his own algebraic regularity lemma \cite{tao-algebraic-regularity}. This may be of independent interest, though we only develop it in the specific setting of interest to us here. This allows one to demonstrate rapid convergence of (symmetrised) powers of $\mu$ to the uniform measure on the group $\Gamma_q$ generated by the elements $g^{-(r)} g^{(s)}$, without knowing \emph{a priori} what this group is. The arguments may be found in Section \ref{unif-conv-power}.

The remaining task (2) is to establish the quasirandomness property for $\rho|_{\Gamma_q}$. It is easy to see that $\Gamma_q \cong \prod_{p | q} \Gamma_p$, and so by using the basic theory of tensor product representations it turns out to be enough to understand the case $q = p$ prime. First, we identify $\Gamma_p$ explicitly. I was initially under the impression that the elements $g^{-(r)} g^{(s)}$ might generically generate the whole of $\Sp_8(\Z/p\Z)$ (which has size $\sim p^{36}$), on the grounds that there is no immediately evident reason why they should not, and for the analogous situation in $\Sp_2(\Z/p\Z)$ this is true. However, it turns out that this is not the case, and that $\Gamma_p$ is (generically) a group of size $\sim p^{12}$, a conjugate (in $\GL_8(\Z/p\Z)$) of $\SL_2(\F_p[\Delta])$, where here $\Delta$ is the matrix discriminant given in \eqref{delta-def}. Establishing this takes some work, involving calculations in $\SL_2$ together with applications of lemmas of Goursat and Ribet on subgroups of direct products. A number of facts about $\SL_2$ of a finite field are required here, and these are collated in Appendix \ref{sl2-app}. These tasks are accomplished in Section \ref{sec10}.

With $\Gamma_p$ identified explicitly, we turn to the quasirandomness property itself. It is essentially automatic from the representation theory of $\SL_2(\F_{p^n})$ that if $\rho|_{\Gamma_p}$ has an irreducible component of small degree, then this component must be the trivial representation: that is, $\Gamma_p$, acting via the Weil representation on $\ell^2((\Z/p\Z)^4)$, would have a nontrivial fixed vector. The final task of the paper, then, is to rule this out. We do this in Section \ref{sec11} using rather direct and explicit (that is, not using any representation theory) arguments.

\section{The circle method}\label{sec3}

In this section we describe the basic setup of the circle method. As is typical in problems of this type we will be aiming for error terms in our main theorem of $O_A(X^6 \log^{-A} X)$, for an arbitrary positive real number $A$.  Fix such an $A$, without loss of generality $A \geq 10$, and set \begin{equation}\label{q-def} M := \log^{C_1} X, \quad M' := \log^{C_2} X,\end{equation} where $C_1 := 10 A$ and $C_2  := 10^5 A\delta^{-1}$, where $\delta$ is the exponent appearing in Proposition \ref{zq-main-prop} below (these choices are by no means optimal, but this is inconsequential).

Set \begin{equation}\label{K-def} K := 8 \max_{ij} |b_{ij}|,\end{equation} where the $b_{ij}$ are the entries of the 4-by-4 matrix $b$. Thus $K$ is a constant depending only on the quadratic form $Q$.  For $q \in \N$ and for $r \in (\Z/q\Z)^*$, denote 

\begin{equation}\label{irq-def} I_{r,q} := \{\theta \in \T: |\theta - \frac{r}{q}| \leq \frac{M}{X^2}\} \end{equation}
and
\begin{equation}\label{irq-tilde}  \tilde I_{r,q} :=  \{ \theta \in \T : |\theta - \frac{r}{q}| \leq \frac{1}{KqX} \}.\end{equation} 
Define the \emph{major arcs}
\begin{equation}\label{major-arc-def} \mathfrak{M} := \bigcup_{\substack{q \leq M' \\ r \in (\Z/q\Z)^*}} I_{r,q},\end{equation} and set
\begin{equation}\label{minor-def}  \mathfrak{m}_1 := \bigcup_{\substack{q \leq KX \\ r \in (\Z/q\Z)^*}} (\tilde I_{r,q} \setminus I_{r,q}), \quad  \mathfrak{m}_2 := \bigcup_{\substack{M' < q \leq KX \\ r \in (\Z/q\Z)^*}} I_{r,q}.\end{equation}

\begin{lemma}\label{minor-major}
We have $\mathfrak{M} \cup \mathfrak{m}_1 \cup \mathfrak{m}_2 = \T$.
\end{lemma}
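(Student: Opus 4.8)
The plan is to show that any $\theta \in \T$ not in $\mathfrak{M}$ must lie in $\mathfrak{m}_1 \cup \mathfrak{m}_2$, using Dirichlet's theorem on rational approximation. Given $\theta \in \T$, apply Dirichlet with denominator bound $KX$: there exist coprime $r, q$ with $1 \le q \le KX$ and $|\theta - \frac{r}{q}| \le \frac{1}{q \cdot KX} \le \frac{1}{KqX}$. (We may of course take $r \in (\Z/q\Z)^*$ after reducing.) Thus $\theta \in \tilde I_{r,q}$ for this pair $(r,q)$, and the only question is which of the three pieces it falls into.

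First I would dispose of the case $q > M'$. If $q > M'$ then either $|\theta - \frac rq| \le \frac{M}{X^2}$, in which case $\theta \in I_{r,q}$ and hence $\theta \in \mathfrak{m}_2$ (as $M' < q \le KX$); or $|\theta - \frac rq| > \frac{M}{X^2}$, in which case $\theta \in \tilde I_{r,q} \setminus I_{r,q} \subseteq \mathfrak{m}_1$. Either way $\theta \in \mathfrak{m}_1 \cup \mathfrak{m}_2$, so we are done in this case.

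Now suppose $q \le M'$. If $|\theta - \frac rq| \le \frac{M}{X^2}$ then $\theta \in I_{r,q} \subseteq \mathfrak{M}$ and we are done. Otherwise $|\theta - \frac rq| > \frac{M}{X^2}$, and since $|\theta - \frac rq| \le \frac{1}{KqX}$ we again have $\theta \in \tilde I_{r,q} \setminus I_{r,q} \subseteq \mathfrak{m}_1$. This exhausts all cases, so every $\theta \in \T$ lies in $\mathfrak{M} \cup \mathfrak{m}_1 \cup \mathfrak{m}_2$; the reverse inclusion is trivial since each of the three sets is a subset of $\T$.

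The only point requiring a word of care — and the mild obstacle here — is the compatibility of the numerical parameters: one needs $\frac{M}{X^2} \le \frac{1}{KqX}$ to fail to cause a gap, i.e. one must check that the intervals $\tilde I_{r,q}$ genuinely contain the corresponding $I_{r,q}$ whenever both are in play, which requires $\frac{M}{X^2} \le \frac{1}{KqX}$, equivalently $q \le \frac{X}{KM}$. For $q \le M'$ this holds once $X$ is large (since $M' = \log^{C_2} X$ and $M = \log^{C_1} X$), and for $q > M'$ the point $\theta$ was placed in $\mathfrak{m}_1$ or $\mathfrak{m}_2$ directly without needing this nesting. Hence for $X$ sufficiently large the covering is complete; for small $X$ the statement is vacuous or can be absorbed into the implied constants elsewhere. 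No genuine difficulty arises.
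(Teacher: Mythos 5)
Your argument is correct and matches the paper's proof, which invokes Dirichlet's theorem to cover $\T$ by the $\tilde I_{r,q}$ with $q \leq KX$ and then observes the rest is immediate from the definitions. Your closing worry about the nesting $I_{r,q}\subseteq\tilde I_{r,q}$ is actually unnecessary (and the lemma holds unconditionally, not just for large $X$): the case split only uses the disjoint decomposition $\tilde I_{r,q}=(\tilde I_{r,q}\cap I_{r,q})\cup(\tilde I_{r,q}\setminus I_{r,q})$, and the first piece lands in $\mathfrak{M}$ or $\mathfrak{m}_2$ via $I_{r,q}$ while the second is in $\mathfrak{m}_1$ by definition, regardless of which interval is longer.
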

\begin{proof}
By Dirichlet's theorem on diophantine approximation, 
\[ \T = \bigcup_{\substack{q \leq KX \\ r \in (\Z/q\Z)^*}} \tilde I_{r,q}.\]
The result then follows immediately.
\end{proof}

Set
\begin{equation}\label{s-def} S(\theta) := \sum_{x, y \in [X]^4}  \Lambda^{\otimes 4}(x)\Lambda^{\otimes 4}(y) e(\theta Q(x,y)),\end{equation}
where, recall, $\Lambda^{\otimes 4}(x)$ is a convenient shorthand for $\prod_{i=1}^4 \Lambda(x_i)$. Then by orthogonality we have
\begin{equation}\label{orth} \sum_{\substack{x,y \in  [X]^4 \\ Q(x,y) = N}} \Lambda^{\otimes 4}(x)\Lambda^{\otimes 4}(y) = \int_{\T} S(\theta) e(- \theta N) d\theta.\end{equation}

We evaluate this by considering the contributions to the integral from $\mathfrak{M}, \mathfrak{m}_1, \mathfrak{m}_2$ separately. The major arcs $\mathfrak{M}$ give the main term in the asymptotic, as the following result shows.

\begin{proposition}[Major arcs]\label{major-arc-prop}
Suppose that $Q$ is regular, that is to say $\smallpmat{a}{b/2}{b^T/2}{c}$ is nonsingular. Then we have
\[ \int_{\mathfrak{M}} S(\theta) e(- \theta N) d\theta = \mathfrak{S}(N) X^6 + O_A(X^6 \log^{-A} X),\] where the singular series $\mathfrak{S}(N)$ is as described in Theorem \ref{mainthm}.
\end{proposition}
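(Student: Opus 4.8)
The plan is to carry out the standard major-arc analysis of the circle method, closely following \cite{zhao}; only the hypothesis $\det Q \neq 0$ is used (none of the genericity conditions on $\Delta$ enter here). Since $M = \log^{C_1} X$ and $M' = \log^{C_2} X$ are fixed powers of $\log X$, the arcs $I_{r,q}$ with $q \le M'$, $r \in (\Z/q\Z)^*$, are pairwise disjoint once $X$ is large, so it suffices to treat each separately with $\theta = r/q + \beta$, $|\beta| \le M/X^2$. On such an arc one sorts the summation variables into residue classes modulo $q$: since $Q(x,y) \equiv Q(s,t) \md{q}$ whenever $x \equiv s$, $y \equiv t$, the twist $e_q(rQ(x,y))$ depends only on the residues, and one is left with $\Lambda$-weighted sums over arithmetic progressions modulo $q$ against the modulation $e(\beta Q(x,y))$. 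Because $|\beta| \le M/X^2$, this modulation has total variation $\ll M$ in each coordinate, so repeated partial summation together with the Siegel--Walfisz theorem (uniform for $q \le \log^{O(1)} X$) replaces each $\Lambda$-weighted sum over a progression by $\phi(q)^{-1}$ times the corresponding integral. The upshot is that for $\theta = r/q + \beta$,
\[ S(\theta) = \sigma_q(r)\, I(\beta) + O\big(q^{O(1)} X^8 \exp(-c\sqrt{\log X})\big), \]
where $I(\beta) := \int_{[0,X]^8} e(\beta Q(\xi,\eta))\, d\xi\, d\eta$ and $\sigma_q(r) := \E_{s,t \in (\Z/q\Z)^4} \Lambda^{\otimes 4}_{\Z/q\Z}(s) \Lambda^{\otimes 4}_{\Z/q\Z}(t)\, e_q(rQ(s,t))$. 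Integrating the error term over $|\beta| \le M/X^2$ and summing over the $\ll M'^2$ arcs contributes $\ll X^6 \log^{O(1)} X \exp(-c\sqrt{\log X}) \ll X^6 \log^{-A} X$.

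For the archimedean factor, rescaling $\xi \mapsto X\xi$ gives $I(\beta) = X^8 J(\beta X^2)$ with $J(\gamma) := \int_{[0,1]^8} e(\gamma Q(\xi,\eta))\, d\xi\, d\eta$, and nondegeneracy of $Q$ gives the stationary-phase bound $|J(\gamma)| \ll \min(1, |\gamma|^{-4})$. Hence
\[ \int_{|\beta| \le M/X^2} I(\beta)\, e(-\beta N)\, d\beta = X^6 \int_{|\gamma| \le M} J(\gamma)\, e(-\gamma N/X^2)\, d\gamma = X^6 \beta_\infty + O(X^6 M^{-3}), \]
the identification of the completed integral $\int_{\R} J(\gamma)\, e(-\gamma N/X^2)\, d\gamma$ with $\beta_\infty$ being the standard one, via Fourier inversion applied to the push-forward of Lebesgue measure on $[0,1]^8$ under $Q$.

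It remains to handle the arithmetic sum $\sum_{q \le M'} \chi_q(N)$, where $\chi_q(N) := \sum_{r \in (\Z/q\Z)^*} \sigma_q(r)\, e_q(-rN)$. By the Chinese Remainder Theorem the function $q \mapsto \chi_q(N)$ is multiplicative, and a routine orthogonality computation (expand the indicator of $Q(s,t) \equiv N \md{p^n}$, then regroup the dual variable as $p^{n-k}r$) identifies $\sum_{k=0}^n \chi_{p^k}(N)$ with $\beta_{p,n}(N)$; consequently $\beta_p(N) = \lim_n \beta_{p,n}(N) = \sum_{k \ge 0} \chi_{p^k}(N)$ exists as soon as this series converges. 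Convergence of the local series, absolute convergence of $\prod_p \beta_p(N)$, and the tail bound $\sum_{q > M'} |\chi_q(N)| \ll M'^{-1}$ all follow from the uniform estimate $|\sigma_q(r)| \ll_{\varepsilon} q^{-4+\varepsilon}$; this is the one place $\det Q \neq 0$ is used, and it follows by diagonalising $Q$ over each $\Z_p$ and invoking the standard bounds for one-variable quadratic Gauss sums, the coprimality constraints $(s_i,q)=1$ being disposed of by inclusion--exclusion. Using absolute convergence to write $\sum_{q=1}^{\infty} \chi_q(N) = \prod_p \beta_p(N)$ and combining the three ingredients gives
\[ \int_{\mathfrak{M}} S(\theta)\, e(-\theta N)\, d\theta = \Big( \prod_p \beta_p(N) + O(\log^{-A} X) \Big)\big( \beta_\infty X^6 + O(X^6 M^{-3}) \big) + O(X^6 \log^{-A} X) = \mathfrak{S}(N) X^6 + O(X^6 \log^{-A} X). \]

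I do not expect a conceptual obstacle here: this is the classical part of the argument and, modulo notation, the computations are essentially those of \cite{zhao}. The two genuinely technical points are the uniform Gauss-sum estimate $|\sigma_q(r)| \ll_{\varepsilon} q^{-4+\varepsilon}$ at prime-power moduli — needed both to control the tail of the singular series and to establish existence of the $p$-adic densities $\beta_p(N)$ — and the bookkeeping required to check that the Siegel--Walfisz error, once pushed through all eight coordinates against the modulation $e(\beta Q)$ and summed over every major arc, remains $\ll X^6 \log^{-A} X$.
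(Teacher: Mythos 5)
Your argument is correct and follows the same classical route as the paper: separate the arcs, use Siegel--Walfisz with partial summation against the modulation to replace the $\Lambda$-weighted sums by the singular integral times a local factor, extend the ranges of both the singular series and singular integral at negligible cost using the $q^{-4+\eps}$ Gauss-sum bound (which is where regularity of $Q$ enters), and identify the completed singular series and integral with $\prod_p\beta_p(N)$ and $\beta_\infty X^6$ respectively. The paper simply outsources the bulk of this to \cite[Lemma 3.6]{zhao} (modulo the slightly different major-arc parameters, which it notes only help) and then carries out the same translation into the local densities $\beta_p$, $\beta_\infty$ that you describe.
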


We will prove this in the next section using classical methods, referring to \cite{zhao} for most of the details.

Now we turn to the minor arcs $\mathfrak{m}_1$ and $\mathfrak{m}_2$. Here, as previously remarked, the fact that we are dealing with primes and the von Mangoldt function is essentially irrelevant. For any functions $F_1, F_2 : [X] \rightarrow \C$ we introduce the sums 
\begin{equation}\label{sf1f2-def} S_{F_1, F_2}(\theta) :=  \sum_{x, y\in [X]^4}  F_1(x)F_2(y) e(\theta Q(x,y)).\end{equation}

\begin{proposition}[Minor arcs $\mathfrak{m}_1$]\label{minor-arcs-m1}
Suppose that $\det b \neq 0$. Then we have
\[ \int_{\mathfrak{m}_1} |S_{F_1, F_2}(\theta)| d\theta \ll_A X^6 \log^{-A-8} X,\] uniformly for all $1$-bounded functions $F_1, F_2$.
\end{proposition}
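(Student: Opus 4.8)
The plan is to bound $\int_{\mathfrak{m}_1} |S_{F_1,F_2}(\theta)|\,d\theta$ by combining a pointwise Weyl-type estimate for $S_{F_1,F_2}(\theta)$ on $\mathfrak{m}_1$ with a simple covering/measure bound for $\mathfrak{m}_1$ itself. First I would recall that $S_{F_1,F_2}(\theta) = \sum_{x,y\in[X]^4} F_1(x)F_2(y)e(\theta Q(x,y))$ where $Q(x,y) = x^Tax + x^Tby + y^Tcy$. Since $b$ is invertible, the key observation is that for fixed $x$, the exponential in $y$ contains the genuinely quadratic piece $e(\theta\, y^Tcy)$ together with a linear-in-$y$ piece $e(\theta\, x^Tby)$; more usefully, completing the sum in a Weyl-differencing fashion, I would apply the Cauchy–Schwarz (or van der Corput / Weyl differencing) inequality in the variable $y$ to reduce to a sum over a difference variable $h$ in which the phase becomes \emph{linear}: differencing $Q(x,y+h) - Q(x,y)$ in $y$ kills the $y^Tcy$ term's quadratic dependence and leaves, for each $h$, a product of one-dimensional geometric-type sums $\prod_i \|\theta (\text{linear form in } x, h)_i\|_\T^{-1}$-style bounds. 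Because $b$ is invertible, the linear forms appearing are nondegenerate, so one gets genuine equidistribution savings.

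The next step is the Diophantine input: on $\mathfrak{m}_1 = \bigcup_{q\le KX,\ r\in(\Z/q\Z)^*}(\tilde I_{r,q}\setminus I_{r,q})$, by definition $\theta$ is within $1/(KqX)$ of some $r/q$ with $(r,q)=1$ but \emph{not} within $M/X^2$ of it; in particular any rational approximation $r/q$ to $\theta$ with small denominator must have $q$ in the range roughly $M \le q \le KX$ (if $q$ were tiny we'd be in $\mathfrak{M}$, and Dirichlet forces existence of such a $q \le KX$). Feeding $q\gtrsim M = \log^{C_1}X$ into the Weyl estimate yields a pointwise bound of the shape $|S_{F_1,F_2}(\theta)| \ll X^8 (q^{-1} + q X^{-2} + X^{-1})^{c}$ for some positive $c$ (with implied constants depending on $Q$ through $b, c$ and on the number of variables), which after optimising over the allowed range of $q$ gives $|S_{F_1,F_2}(\theta)| \ll X^8 \cdot \log^{-c C_1} X$ uniformly on $\mathfrak{m}_1$, say $|S_{F_1,F_2}(\theta)| \ll X^8 \log^{-(A+9)}X$ once $C_1 = 10A$ is large enough relative to $c$. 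I would be a little careful here to get the dependence on $q$ in both directions (major-arc proximity excluded, and $q$ not too large) so that the worst case over the whole of $\mathfrak{m}_1$ is still a genuine power-of-log saving.

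Finally I multiply by $\mathrm{meas}(\mathfrak{m}_1)$. Since $\mathfrak{m}_1 \subseteq \bigcup_{q\le KX}\bigcup_{r}\tilde I_{r,q}$ and each $\tilde I_{r,q}$ has length $2/(KqX)$ with $\phi(q)\le q$ choices of $r$, we get $\mathrm{meas}(\mathfrak{m}_1) \le \sum_{q\le KX} q\cdot \frac{2}{KqX} \ll \sum_{q \le KX} \frac{1}{X} \ll 1$ — a crude bound, but all that is needed. (One can do slightly better, but it is irrelevant.) Combining, $\int_{\mathfrak{m}_1}|S_{F_1,F_2}(\theta)|\,d\theta \ll X^8 \log^{-(A+9)}X \cdot 1 \ll X^6 \log^{-(A+8)}X$ provided we have a spare factor — wait, this gives $X^8$, not $X^6$, so in fact the pointwise bound needs to save a factor $X^{2}$ over the trivial bound $X^8$ \emph{before} the log saving; this is exactly what the Weyl differencing over the $4+4$ variables delivers, since differencing one block of $4$ variables against a nondegenerate linear form produces a saving of a full power of $X^{2}$ (four one-dimensional sums each saving, roughly, a factor $X^{1/2}$ off the trivial $X$, hence $X^2$ total) plus the additional $\log$-power from $q \gtrsim M$. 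So the honest target pointwise bound is $|S_{F_1,F_2}(\theta)| \ll X^6 \log^{-(A+8)}X$ on $\mathfrak{m}_1$, and then $\mathrm{meas}(\mathfrak{m}_1)\ll 1$ finishes it.

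The main obstacle, I expect, is organising the Weyl differencing so that the invertibility of $b$ is actually used to guarantee that all the relevant linear forms in the difference variables are nondegenerate modulo the denominator $q$, and then doing the standard but fiddly bookkeeping of geometric sums $\sum_{|n|\le X}\min(X, \|\theta n\|_\T^{-1})$ with $\theta$ near $r/q$ — keeping track of the interaction between the range of $q$ (bounded below by $M$, above by $KX$) and the error terms $qX^{-2}$ and $X^{-1}$, so that one genuinely extracts $\log^{-A-8}X$ uniformly and not just for $q$ in some favourable subrange. This is entirely standard technology in the area (as the paper itself notes, "diophantine approximation arguments standard in the area"), so I would lean on the treatment in \cite{zhao} for the details of the geometric-sum estimates and focus the write-up on the reduction step and the final combination.
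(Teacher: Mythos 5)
There is a genuine gap in the final assembly of the argument. You propose to establish a \emph{uniform} pointwise bound $|S_{F_1,F_2}(\theta)| \ll X^6 \log^{-(A+8)}X$ on all of $\mathfrak{m}_1$ and multiply by $\mu(\mathfrak{m}_1)\ll 1$. But that pointwise bound is false. The set $\mathfrak{m}_1$ is a union over \emph{all} $q \le KX$, including very small $q$: for instance $\theta = 2M/X^2$ lies in $\tilde I_{0,1}\setminus I_{0,1}\subset \mathfrak{m}_1$ for $X$ large. Your remark that ``any rational approximation $r/q$ to $\theta$ with small denominator must have $q \gtrsim M$'' is not correct; membership in $\mathfrak{m}_1$ only forces $|\theta - r/q| > M/X^2$, not $q \ge M$. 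And at such a $\theta$ the sum really is large. Taking $Q(x,y) = x^T y$ (so $a=c=0$, $b=I$) and $F_1=F_2=1$, one has $S(\theta) = \big(\sum_{x,y\in[X]} e(\theta xy)\big)^4$, and the inner sum is at least $c\,X^2/M$ (the terms with $x \le X/(2M)$ already contribute this much), so $|S(\theta)| \gg X^8/M^4 = X^8 \log^{-40A}X$, which for $X$ large dwarfs $X^6 \log^{-(A+8)}X$. Even away from these small-$q$ singularities the Cauchy--Schwarz/geometric-sum estimate only delivers $|S_{F_1,F_2}(\theta)| \ll X^6 \log^{2} X$ --- a power-of-log \emph{loss}, not a saving --- so a uniform $X^6\log^{-(A+8)}X$ pointwise bound is unattainable by this route.

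The fix, which is what the paper actually does, is twofold. First, the measure bound must be sharpened to $\mu(\mathfrak{m}_1) < 1/M$ (not merely $\ll 1$); this uses that $\tilde I_{r,q}\setminus I_{r,q}$ is empty once $q > X/(KM)$, so $\sum_{q}\phi(q)\cdot\frac{2}{KqX} \le \sum_{q \le X/(KM)}\frac{2}{KX} < \frac{1}{M}$. That factor $M^{-1} = \log^{-10A}X$ is what turns the pointwise $X^6\log^{2}X$ into an acceptable $X^6 M^{-1}\log^2 X$. Second, the contribution from the regions where the pointwise estimate degrades (the term $\sim X^4\log^2 X/(\eta^2 q^2)$ for $\theta = r/q+\eta$) cannot be handled by a single worst-case pointwise bound times total measure; it must be genuinely \emph{integrated} in $\eta$ over the range $M/X^2 < |\eta| \le 1/(KqX)$, which gives $\int \eta^{-2}\,d\eta \ll X^2/M$, and then summed over $q$ and $r$, where $\sum_q \phi(q)/q^2 \ll \log X$. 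The balancing here is essential: precisely where the pointwise bound is worst (small $q$, $|\eta|$ near $M/X^2$), the measure of the corresponding $\eta$-range is small. Your Cauchy--Schwarz step (differencing one block of four variables using the invertibility of $b$ to produce nondegenerate linear forms, then reducing to one-dimensional sums $\sum_{|h|\le KX/2}\min(X,\|\theta h\|_\T^{-1})$) is exactly the paper's mechanism and is fine; the missing idea is the trade-off between measure and pointwise size, which a ``uniform pointwise $\times$ total measure'' argument cannot capture.
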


We will prove this in Section \ref{sec5}, using diophantine approximation arguments typical of the circle method.

\begin{proposition}[Minor arcs $\mathfrak{m}_2$]\label{minor-arcs-m2}
Suppose that $Q$ is generic \textup{(}that is, $\Delta$ has four distinct eigenvalues in $\overline{\Q} \setminus \{0, -1\}$\textup{)}. Then we have
\[ \int_{\mathfrak{m}_2} |S_{F_1, F_2}(\theta)| d\theta \ll_A X^6 \log^{-A-8} X,\] uniformly for all $1$-bounded functions $F_1, F_2$.
\end{proposition}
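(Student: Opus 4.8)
\textbf{Proof proposal for Proposition \ref{minor-arcs-m2}.} The plan is to follow the outline sketched in Section 2, reducing the integral over $\mathfrak{m}_2$ to an average of Weil-representation matrix coefficients and then invoking the general machinery developed later in the paper. First, I would dissect $\mathfrak{m}_2 = \bigcup_{M' < q \leq KX} \bigcup_{r} I_{r,q}$ dyadically in $q$: for $Q_0 \in [M', KX]$ a power of two, estimate $\int_{\bigcup_{Q_0 < q \leq 2Q_0, (r,q)=1} I_{r,q}} |S_{F_1, F_2}(\theta)|\, d\theta$. On each such arc write $\theta = r/q + \beta$ with $|\beta| \leq M/X^2$ and perform the standard completion-of-the-exponential-sum step: split each of the eight variables $x_i, y_j$ into residue classes mod $q$, so that $S_{F_1,F_2}(r/q + \beta)$ becomes a sum over residues $u, v \in (\Z/q\Z)^4$ of the complete exponential sum $e_q(rQ(u,v))$ times a smooth ``Archimedean'' weight depending on $\beta$ and on the partial sums of $F_1, F_2$ along arithmetic progressions. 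After bounding the Archimedean integral over $\beta$ and summing over the $\phi(q)$ values of $r$, one is left — for $F_1, F_2$ that are $1$-bounded, hence $\ell^2$-bounded after normalisation — with controlling an average of the shape $\phi(q)^{-1}\sum_{r \in (\Z/q\Z)^*} |T_{f_1, f_2}(r)|$ as in \eqref{t-avg}, for suitable $1$-bounded-in-$\ell^2$ functions $f_1, f_2$ on $(\Z/q\Z)^4$. This is exactly the ``less wasteful reduction'' alluded to in the outline; the details, including the passage to $q$ squarefree without small prime factors, are precisely what Section \ref{sec6} is advertised to supply, so I would cite the relevant proposition there (the one producing a bound of the form $q^{-\delta}$, i.e. Proposition \ref{zq-main-prop}).

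With that reduction in hand, the heart of the matter is the claimed power-saving bound $\phi(q)^{-1}\sum_{r} |T_{f_1, f_2}(r)| \ll q^{-\delta}$ for $q$ squarefree with no small prime factors. Here I would invoke the interpretation $T_{f_1,f_2}(r) = \langle f_1, \rho(g^{(r)}) f_2\rangle$ from Section \ref{sec7}, turning the quantity into an average $\int_G |\langle f_1, \rho(x) f_2\rangle|\, d\mu(x)$ over $G = \Sp_8(\Z/q\Z)$, where $\mu$ is the uniform measure on the $\phi(q)$ dilates $g^{(r)}$. Then one applies the abstract matrix-coefficient bound of Section \ref{sec8}: it suffices to verify (i) that a bounded-order symmetrised convolution power of $\mu$ is close to the uniform measure on the subgroup $\Gamma_q = \langle g^{-(r)} g^{(s)}\rangle$, which is provided by the Lang--Weil-based convergence result of Section \ref{unif-conv-power}; and (ii) that $\rho|_{\Gamma_q}$ has no low-dimensional irreducible constituents. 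For (ii) one uses $\Gamma_q \cong \prod_{p|q} \Gamma_p$ together with the tensor-product structure of $\rho$ to reduce to $q = p$ prime, the identification $\Gamma_p \cong$ a $\GL_8$-conjugate of $\SL_2(\F_p[\Delta])$ from Section \ref{sec10}, and the representation theory of $\SL_2$ of a finite field, which forces any small constituent to be trivial; finally, the no-invariant-vector statement of Section \ref{sec11} rules that out. The genericity hypothesis on $\Delta$ (four distinct eigenvalues, none equal to $0$ or $-1$) enters precisely in identifying $\Gamma_p$ and in ensuring $\F_p[\Delta]$ has the right structure.

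The main obstacle, in the sense of where the real content lies, is of course (ii) — the quasirandomness of $\rho|_{\Gamma_p}$ — together with the prerequisite explicit identification of $\Gamma_p$; but within the present proposition, assuming the downstream sections as black boxes, the only genuinely delicate bookkeeping is the first paragraph's reduction. Specifically, one must be careful that the completion step does not lose more than a bounded power of $\log X$: the Archimedean integral over $|\beta| \leq M/X^2$ contributes a factor $\asymp M X^{-2} \cdot X^6 / q^2$ per arc after the trivial bound $q^2$ on the complete sum is replaced by the gain $q^{2-\delta}$, and summing over $q \in (Q_0, 2Q_0]$ and $r$ gives roughly $M X^4 \cdot Q_0^{-\delta} \cdot Q_0^{?}$; one checks the powers of $Q_0$ cancel against the $X^6$ and the summation range so that, since $Q_0 > M' = \log^{C_2} X$ with $C_2$ chosen large relative to $A/\delta$, the total is $\ll X^6 \log^{-A-8} X$ as required. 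I would also need to confirm that the $1$-boundedness of $F_1, F_2$ survives the normalisation converting partial sums along progressions into $\ell^2$-normalised $f_1, f_2$ (this is where dividing by $q^{1/2}$ per factor and the $q^2$ in \eqref{tf1f2} are accounted for), but this is routine. In short: the proof is the assembly of the reduction in Sections \ref{sec6}--\ref{sec7}, the abstract bound in Section \ref{sec8}, and the group-theoretic and representation-theoretic inputs of Sections \ref{unif-conv-power}, \ref{sec10} and \ref{sec11}; the present section's job is the dyadic circle-method bookkeeping that glues them to the integral over $\mathfrak{m}_2$.
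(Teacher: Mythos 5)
Your high-level strategy matches the paper's: express the integral over $\mathfrak{m}_2$ in terms of the complete sums $T_{f_1,f_2}(r)$, reduce to the averaged bound of Proposition \ref{zq-main-prop}, and cite the later sections for that bound. But there is a genuine gap in the reduction itself, at precisely the point you label routine.

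When you write $\theta = r/q + \beta$ and split the eight variables into residue classes mod $q$, the exponential factor $e(\beta Q(x,y))$ does \emph{not} factor as a product of a function of $x$ and a function of $y$: it is a quadratic phase mixing the two blocks. Consequently the inner sum over $x \equiv u$, $y \equiv v$ is not of the form $f_1(u) f_2(v)$, and there is no direct way to ``bound the Archimedean integral over $\beta$'' first and then appeal to $T_{f_1,f_2}(r)$ for $\ell^2$-normalised $f_1, f_2$. A plan that silently assumes this separability does not go through. The paper resolves this with a specific Fourier-analytic device: it introduces $W_\lambda(u,v) = w(u,v) e(\lambda Q(u,v))$ with $w$ a smooth cutoff, writes $S_{F_1,F_2}(r/q + \eta) = \int \hat W_{\eta X^2}(\xi,\xi')\, S_{F_{1,\xi},F_{2,\xi'}}(r/q)\, d\xi\, d\xi'$ using Fourier inversion, and observes that each Fourier mode $e(\xi u + \xi' v)$ is a \emph{linear} modulation which absorbs harmlessly into $F_1, F_2$, preserving $1$-boundedness. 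The cost is the $\ell^1$-norm $\Vert \hat W_\lambda \Vert_1 \ll \max(1,|\lambda|)^4$, acceptable because $|\lambda| = |\eta| X^2 \leq M = \log^{C_1} X$. This step is the technical heart of the reduction and cannot be omitted.

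A secondary issue: you defer ``the passage to $q$ squarefree without small prime factors'' to ``details Section 6 is advertised to supply,'' but that reduction is itself nontrivial and is in fact part of what you are being asked to prove. The paper's Lemma \ref{factor-lemma} (factor $q = q_0 q_1$, pay $q_1^9$ to drop to the good part $q_0$) combined with a careful $\min(q^{-1}, q_1^{11}q^{-1-\delta})$ summation is needed to make the sum over all $q > M'$ converge with the required saving. Your dyadic decomposition in $q$ and the ``one checks the powers of $Q_0$ cancel'' sketch do not by themselves address how to control the (generic) values of $q$ that are not squarefree with no small prime factors; without that, the convergence argument cannot close.

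Finally, a minor remark: the box decomposition giving Corollary \ref{saq-cors} (covering $[X]^4 \times [X]^4$ by boxes of side $q$) is the right way to produce the $\ell^2$-normalised $f_1, f_2$; this does match what you gesture at, but it happens at $\beta = 0$, i.e.\ \emph{after} the Fourier step has already removed the $\eta$-dependence.
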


This proof of this, which is a substantial undertaking, contains the new ideas of the paper and occupies the remaining sections.

Let us conclude this section by remarking that Propositions \ref{major-arc-prop}, \ref{minor-arcs-m1} and \ref{minor-arcs-m2} easily combine to establish Theorem \ref{mainthm}. Indeed,  by \eqref{orth} and Proposition \ref{major-arc-prop} we have
\begin{align*}   \big| \sum_{\substack{x,y \in  [X]^4 \\ Q(x,y) = N}} \Lambda^{\otimes 4}(x)&\Lambda^{\otimes 4}(y) - \mathfrak{S}(N) X^6 \big| \\ &  \leq O_A(X^6\log^{-A} X) + |\int_{\T \setminus \mathfrak{M}} S(\theta) e(-\theta N) d\theta|.\end{align*}
By the triangle inequality and Lemma \ref{minor-major}, the second term on the right is bounded above by
\[ \int_{\mathfrak{m}_1} |S(\theta)| d\theta + \int_{\mathfrak{m}_2} |S(\theta)| d\theta.\]
By Propositions \ref{minor-arcs-m1} and \ref{minor-arcs-m2} (taking $F_1 = F_2 = (\log X)^{-4} \Lambda^{\otimes 4}$), both of these terms are bounded by $O_A(X^6 \log^{-A} X)$.

\section{The major arcs}\label{sec4}

In this section we establish Proposition \ref{major-arc-prop}. The argument is very classical and in fact large portions of it may be simply quoted from \cite{zhao}. For this part of the argument, similar results hold with as few as 5 variables. Define
\begin{equation}\label{zhao-c} C(q,r) := \sum_{\substack{ x,y \in (\Z/q\Z)^4 \\ (x,q) = (y,q) = 1}} e_q(r Q(x,y)),\end{equation}
\begin{equation}\label{zhao-b} B_{Q,N}(q) := \frac{1}{\phi(q)^8} \sum_{r \in (\Z/q\Z)^*} C(q,r) e_q(-rN),\end{equation}
\begin{equation}\label{zhao-s} \mathfrak{B}_{Q,N} := \sum_{q = 1}^{\infty} B_{Q,N}(q),\end{equation}
\begin{equation}\label{zhao-i} I(t) := \int_{u,v \in [0, X]^4} e(tQ(u,v)) dudv\end{equation} and
\begin{equation}\label{zhao-j} \mathfrak{J}_{Q,N}(X) := \int^{\infty}_{-\infty} I(t) e(- Nt).\end{equation}
These are the same definitions as those in \cite[Section 3]{zhao}, with some notational substitutions (Zhao's $f, t, \beta, \mathfrak{S}(f,t)$ become our $Q, N, t, \mathfrak{B}_{Q,N}$ respectively). Also, we notate our quadratic forms using two variables $x, y$. Definitions like these will be familiar to anyone with knowledge of the circle method. The following is \cite[Lemma 3.6]{zhao}.
\begin{lemma}[Major arcs]\label{major-arc-lem}
We have 
\begin{equation}\label{major-arcs} \int_{\mathfrak{M}} S(\theta) e(- \theta N) d\theta = \mathfrak{B}_{Q,N} \mathfrak{J}_{Q,N}(X) + O_A(X^6 \log^{-A} X).\end{equation}
\end{lemma}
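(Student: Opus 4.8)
\textbf{Proof proposal for Lemma \ref{major-arc-lem}.}

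The plan is to run the standard major-arc analysis of the circle method, decomposing $S(\theta)$ on each arc $I_{r,q} \subset \mathfrak{M}$ into a product of a complete exponential sum over $\Z/q\Z$ and an archimedean oscillatory integral, with an acceptable error. Concretely, on $I_{r,q}$ one writes $\theta = \frac{r}{q} + \eta$ with $|\eta| \le M/X^2$, and uses the fact that $\Lambda$ restricted to a residue class $a \md q$ with $(a,q)=1$ has, after a Siegel--Walfisz-type equidistribution input, average density $q/\phi(q)$ up to an error $O(X\log^{-B} X)$ for any $B$ (this is where $\Lambda$ enters, but only through this classical estimate, and where the restriction $q \le M' = \log^{C_2} X$ is used). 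Splitting the sum $\sum_{x,y\in[X]^4}\Lambda^{\otimes 4}(x)\Lambda^{\otimes 4}(y)e(\theta Q(x,y))$ according to the residues $x \equiv u$, $y \equiv v \md q$, the $e_q(rQ(x,y))$ factor depends only on $(u,v)$, and the remaining sum over the arithmetic progressions is handled by partial summation against $I(\eta)$. This yields, for each arc,
\[ \int_{I_{r,q}} S(\theta)e(-\theta N)\,d\theta = \frac{C(q,r)}{\phi(q)^8}\, e_q(-rN)\int_{|\eta|\le M/X^2} I(\eta) e(-\eta N)\,d\eta + (\text{error}),\]
where $C(q,r)$ is exactly the complete sum \eqref{zhao-c} because the coprimality conditions $(x,q)=(y,q)=1$ are precisely what survive from $\Lambda_{\Z/q\Z}^{\otimes 4}$.

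The second step is purely bookkeeping: sum over $r \in (\Z/q\Z)^*$ and $q \le M'$ to assemble the truncated singular series $\sum_{q \le M'} B_{Q,N}(q)$ and the truncated singular integral $\int_{|\eta|\le M/X^2} I(\eta)e(-\eta N)\,d\eta$. One then needs (a) that the tail $\sum_{q > M'} |B_{Q,N}(q)|$ is $O(X^6 \log^{-A}X)$ — which follows from the standard bound $|B_{Q,N}(q)| \ll q^{-1-\delta'}$ valid in the regular case (here the hypothesis that $Q$ is regular, i.e. $\det\smallpmat{a}{b/2}{b^T/2}{c}\ne0$, is exactly what gives square-root cancellation in the Gauss-type sums $C(q,r)$ and hence convergence of $\mathfrak{B}_{Q,N}$ with a power saving), and (b) that the tail of the singular integral, $\int_{|\eta|>M/X^2}|I(\eta)|\,d\eta$, contributes $O(X^6\log^{-A}X)$, which follows from the easy bound $|I(\eta)| \ll X^4 (X^2|\eta|)^{-2}$ obtained by stationary phase / integration by parts using $\det b \ne 0$ (this is where one needs the quadratic part to be genuinely nondegenerate in the relevant sense, and four pairs of variables give the exponent $-2$, i.e.\ four independent one-dimensional integrals each contributing $(X^2|\eta|)^{-1/2}$). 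Both (a) and (b) are worked out in detail in \cite[Section 3]{zhao}, and since our definitions \eqref{zhao-c}--\eqref{zhao-j} coincide with his after the notational dictionary $f,t,\beta,\mathfrak{S}(f,t) \leftrightarrow Q, N, t, \mathfrak{B}_{Q,N}$, we may simply invoke \cite[Lemma 3.6]{zhao}.

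The one genuine point requiring care — and the main obstacle, such as it is — is verifying that our normalisations match Zhao's, in particular the placement of the $\phi(q)$ factors: our $\Lambda_{\Z/q\Z}$ is normalised to average $1$, so the weight $q^4/\phi(q)^4$ appearing when one passes from $\Lambda^{\otimes 4}$ on each of the two groups of four variables to the complete sum $C(q,r)$ is absorbed correctly into the $\phi(q)^{-8}$ in \eqref{zhao-b}, and the corresponding $q^{8}$ from $(\Z/q\Z)^8$ is absorbed into $I(\eta) \asymp X^8$ via the change of variables $u = x/X$, $v=y/X$. Once this matching is confirmed, no new work is needed: the argument is identical to \cite{zhao} because it uses none of the prime-variable structure beyond Siegel--Walfisz and none of the form's structure beyond regularity and $\det b \ne 0$, both of which are hypotheses here. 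I would therefore present this lemma essentially as a citation, spelling out only the normalisation dictionary and the two tail bounds (a) and (b) in a sentence each.
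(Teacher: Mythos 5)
Your proposal takes the same route as the paper: the paper's ``proof'' of this lemma is literally the citation to \cite[Lemma 3.6]{zhao}, together with a short remark on the (inconsequential) differences in normalisation — notably the major arc width $M/X^2$ here versus $M/(qX^2)$ in Zhao, and the need to take a slightly higher exponent in the Siegel--Walfisz input — and your proposal likewise reduces to that citation after a normalisation check. One small slip in your internal sketch: the bound on the oscillatory integral should have prefactor $X^8$, not $X^4$ (since trivially $|I(\eta)| \leq X^8$), i.e.\ something of the form $|I(\eta)| \ll X^8 (X^2|\eta|)^{-\alpha}$ with $\alpha > 1$ suffices for the tail; but as this lives inside the cited material it does not affect the argument.
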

\emph{Remarks.} Our choice of $C_1$ and $C_2$ in the definition \eqref{q-def} of $M, M'$ ensures that our major arcs are amply wide enough that the error term in \cite[Lemma 3.6]{zhao} is $O_A( X^6 \log^{-A} X)$. The only consequence of taking the major arcs this wide is that the choice of exponent in the error term of the Siegel-Walfisz theorem towards the end of \cite[Section 3]{zhao} (which is, in any case, not made explicit there) must be larger.

There is one further inconsequential difference between our setup and that in \cite{zhao}. In \cite{zhao} the major arc about $\frac{r}{q}$ has width $\frac{M}{qX^2}$, whereas we have taken the width to be $\frac{M}{X^2}$.  The only other tiny change required is in (3.17), (3.19) of \cite{zhao} where the integrals should be taken over our slightly longer range $|\beta| \leq M/X^2$ (which actually helps slightly). 

To reconcile this with Proposition \ref{major-arc-prop} we must express $\mathfrak{B}_{Q.N}$ and $\mathfrak{J}_{Q,N}(X)$ in terms of the local densities $\beta_p, \beta_{\infty}$, whose definitions are given in the statement of Theorem \ref{mainthm}. This is again a standard endeavour, but it is not done in Zhao's paper so we give brief details now.

Recall the definition \eqref{beta-p} of $\beta_{p,n}(N)$. By orthogonality, we have
\[ \beta_{p, n}(N) = p^{-8n} \sum_{r \in \Z/p^n \Z}\sum_{x, y \in (\Z/p^n \Z)^4} \Lambda_{\Z/p\Z}^{\otimes 4}(x) \Lambda_{\Z/p\Z}^{\otimes 4}(y) e_{p^n}(r (Q(x,y) - N)).\]
In the sum over $r$, write $r = p^{n - j} r'$ with $(r', p) = 1$. One may check that the contribution from a particular $j$ is $B_{Q,N}(p^j)$, and so 
\[ \beta_{p, n}(N) = \sum_{j = 0}^n B_{Q,N}(p^j).\] Taking the limit as $n \rightarrow \infty$ gives
\[ \beta_p(N) = \sum_{j = 0}^{\infty} B_{Q,N}(p^j).\]
Finally, since $B_{Q,N}(q)$ is a multiplicative function of $q$ (see \cite[Lemma 3.1]{zhao}) we have
\begin{equation}\label{non-arch} \prod_{p} \beta_p (N) = \sum_{q = 1}^{\infty} B_{Q,N}(q) = \mathfrak{B}_{Q,N}.\end{equation}
There are, of course, convergence issues to be dealt with here, but these are fully fleshed out in \cite[Lemma 3.4]{zhao}.

To handle the archimedean factor $\beta_{\infty}$, we proceed is as follows (we leave detailed analytic justifications to the reader). For $\eps > 0$, set
\[ f_{\eps}(w) := \frac{1}{2\eps} \mu_{\R^4} \{ u,v \in \R^4 : u,v \in [X]^4, w - \eps \leq Q(u,v) \leq w + \eps\}.\]
Fourier inversion then gives
\[ f_{\eps}(N) = \int^{\infty}_{-\infty} \hat{f}_{\eps}(t) e(Nt) dt.\]
However, 
\[ I(t) = \lim_{\eps \rightarrow 0} \hat{f}_{\eps}(-t).\]
Taking limits as $\eps \rightarrow 0$ (and substituting $x := \frac{u}{X}$, $y := \frac{v}{X}$, $\delta := \frac{\eps}{X^2}$ and using the homogeneity of $Q$) gives 
\begin{equation}\label{jqn} \mathfrak{J}_{Q,N}(X) = \beta_{\infty} X^6.\end{equation}
Substituting \eqref{non-arch} and \eqref{jqn} into Lemma \ref{major-arc-lem} gives Proposition \ref{major-arc-prop}.

\section{Minor arcs: the integral over \texorpdfstring{$\mathfrak{m}_1$}{m1}}\label{sec5}

In this section we prove Proposition \ref{minor-arcs-m1}. The reader may wish to recall the definitions of $I_{r,q}$, $\tilde I_{r,q}$ and $\mathfrak{m}_1$, which are \eqref{irq-def}, \eqref{irq-tilde} and \eqref{minor-def} respectively.

\begin{proof}[Proof of Proposition \ref{minor-arcs-m1}]
Observe that $\tilde I_{r,q} \setminus I_{r,q}$ is empty if $q >X/KM$, and therefore
\begin{equation}\label{m1-meas} \mu(\mathfrak{m}_1) \leq \sum_{q \leq X/KM} \sum_{r \in (\Z/q\Z)^*} \frac{2}{Kq X} <\frac{1}{M}.\end{equation}

Write
\begin{equation}\label{sf1f2-rewrite} S_{F_1, F_2}(\theta) := \sum_{x,y \in [X]^4} F'_1(x) F'_2(y) e(\theta x^T b y)\end{equation} where $F'_1(x) := F_1(x) e(\theta x^T a x)$, $F'_2(y) := F_2(y) e(\theta  y^T c y)$.
By Cauchy-Schwarz, 
\[ |S_{F_1, F_2}(\theta)| \leq X^2 \big(\sum_{y,y' \in [X]^4} F'_2(y) \overline{F'_2(y')} \sum_{x \in [X]^4} e(\theta x^T b (y - y'))\big)^{1/2}.\]

By the standard estimate
\[ |\sum_{x \in [X]} e(\xi x)| \ll \min(X, \Vert \xi \Vert_{\T}^{-1})\] and since $\Vert F_2 \Vert_{\infty} \leq 1$ it follows that
\begin{equation}\label{sf1-use} |S_{F_1, F_2}(\theta)| \ll X^2 \big(\sum_{y,y' \in [X]^4}  \prod_{j = 1}^4 \min(X, \Vert \theta (b(y - y'))_j \Vert_{\T}^{-1})\big)^{1/2}.\end{equation}
Now the image of $[X]^4 \times [X]^4$ under the map $(y, y') \mapsto b^T (y - y')$ is contained in the box $[-\frac{1}{2}KX, \frac{1}{2}KX]^4$ and the fibres are of size at most $X^4$ (since $b$ is nonsingular; recall also from the definition \eqref{K-def} that $K = 8 \max_{ij} |b_{ij}|$). It follows from \eqref{sf1-use} that 
\begin{equation}
|S_{F_1, F_2}(\theta)| \ll  X^{4} \big(  \sum_{\substack{h \in \Z \\ |h| \leq KX/2}}  \min(X, \Vert \theta h \Vert_{\T}^{-1}) \big)^2.\label{first-est}
\end{equation}

Suppose now that $\theta \in \tilde I_{r,q} \setminus I_{r,q}$, thus 
\[ \theta = \frac{r}{q} + \eta \quad \mbox{where} \quad \frac{M}{X^2} < |\eta| \leq \frac{1}{KqX}.\] Foliating into progression modulo $q$ we have \begin{align}\nonumber  \sum_{|h| \leq KX/2} \min(X, & \Vert \theta h \Vert_{\T}^{-1}) \\ & = \sum_{b \mdsub q} \sum_{\substack{|h| \leq KX/2 \\ h \equiv b \mdsub{q}}} \min(X, \Vert (\frac{r}{q} + \eta) h) \Vert_{\T}^{-1}).\label{t-sum} \end{align}
We evaluate the contributions from $b = 0$ and $b \neq 0$ separately. If $b \neq 0$, $h \equiv b \md{q}$ and $|h| \leq KX/2$ then 
\[ \Vert (\frac{r}{q} + \eta) h \Vert_{\T} \geq \Vert \frac{rb}{q} \Vert_{\T} - \frac{KX}{2}|\eta|\geq \Vert \frac{rb}{q}\Vert_{\T} - \frac{1}{2q}.\] Thus, if $b \neq 0$, 
\begin{equation}\label{b-not-zero} \sum_{\substack{|h| \leq KX/2 \\ h \equiv b \mdsub{q}}} \min(X, \Vert (\frac{r}{q} + \eta) h) \Vert_{\T}^{-1}) \ll \frac{X}{q} \big(\Vert \frac{rb}{q}\Vert_{\T} - \frac{1}{2q}\big)^{-1}.\end{equation} (Recall here that $q \leq KX$, so the number of terms in the sum over $h$ is indeed $\ll X/q$.) Now as $b$ ranges over $(\Z/q\Z) \setminus \{0\}$, so does $rb$.  Thus
\[ \sum_{\substack{b \mdsub{q} \\ b \neq 0}} \big(\Vert  \frac{rb}{q}\Vert_{\T} - \frac{1}{2q}\big)^{-1} = \sum_{\substack{s \mdsub{q} \\ s \neq 0}} \big(\Vert \frac{s}{q}\Vert_{\T} - \frac{1}{2q}\big)^{-1} \ll q \log q.\] 
Substituting into \eqref{b-not-zero}, we see that the contribution to the right-hand side of \eqref{t-sum} from the terms with $b \neq 0$ is $\ll X \log q = O(X\log X)$.

Now we look at the contribution to the right-hand side of \eqref{t-sum} from $b = 0$. Making the substitution $h = kq$, this is
\begin{equation}\label{b0-contrib} \sum_{|k| \leq KX/2q} \min(X, \Vert \eta k q \Vert_{\T}^{-1}).\end{equation}
We have
\[ |\eta k q| \leq \frac{1}{KqX} \cdot \frac{KX}{2q} \cdot q < \frac{1}{2},\] so $\Vert \eta k q \Vert_{\T} = |\eta k q|$. Therefore \eqref{b0-contrib} is
\[ \sum_{|k| \leq KX/2q} \min(X, |\eta k q |^{-1})  \leq X + \sum_{0 < |k| < KX/2q} |\eta k q|^{-1}  \ll X + \frac{1}{\eta q} \log X.\]
Substituting these bounds for $b \neq 0$ and $b = 0$ into \eqref{t-sum}, we obtain
\[ \sum_{|h| \leq KX/2} \min(X, \Vert \theta h \Vert_{\T}^{-1}) \ll (X  + \frac{1}{\eta q} )\log X.\]
Substituting into \eqref{first-est} gives, for $\theta = \frac{r}{q} + \eta \in \tilde I_{r,q} \setminus I_{r,q}$, 
\[ |S_{F_1, F_2}(\theta) | \ll X^6 \log^2 X + \frac{X^4 \log^2 X }{\eta^2 q^2}.\]
To complete the proof of Proposition \ref{minor-arcs-m1}, we need to integrate this estimate over $\theta \in \mathfrak{m}_1$, that is to say over all $\tilde I_{r,q} \setminus I_{r,q}$ with $q \leq KX$ and $r \in (\Z/q\Z)^*$. The contribution from the first term $X^6 \log^2 X$ is at most $X^6 M^{-1} \log^2 X$ by \eqref{m1-meas}. The contribution from the second term is
\begin{align*}  \ll X^4 \log^2 X & \sum_{q \leq KX} \frac{1}{q^2} \sum_{r \in (\Z/q\Z)^*} \int_{M/X^2}^{1/KqX} \eta^{-2} d\eta \\ & \leq X^4 \log^2 X \sum_{q \leq KX} \frac{1}{q} \frac{X^2}{M} \ll X^6 M^{-1}  \log^3 X.\end{align*}
Recalling that $M = \log^{C_1} X$ with $C_1 = 10A$, this completes the proof.
\end{proof}

\section{The integral over \texorpdfstring{$\mathfrak{m}_2$}{m2} -- first reductions}\label{sec6}

We now begin the lengthy task of establishing Proposition \ref{minor-arcs-m2}. Once again, the reader may wish to begin by recalling the pertinent definitions, which are those of $I_{r,q}$ (see \eqref{irq-def}), $\mathfrak{m}_2$ (see \eqref{minor-def}) and $S_{F_1, F_2}(\theta)$ (given in \eqref{sf1f2-def}). 

At the heart of our analysis will be certain complete exponential sums $T_{f_1, f_2}$. Let $q$ be a positive integer, and suppose that $f_1, f_2 : (\Z/q\Z)^4 \rightarrow \C$. Define
 \begin{equation}\label{t-def} T_{f_1, f_2}(r) :=  q^2\E_{x,y \in (\Z/q\Z)^4}  f_1(x) f_2(y) e_q(r Q(x, y) ).\end{equation}  
 \emph{Remark.} Of course, $T$ is also depends on $q$, but we omit explicit mention of this from the notation. There should not be any danger of confusion.
 For fixed $r$ and general $f_1, f_2$ we have the following bound.
 
 \begin{lemma}\label{triv-bd}
Suppose that $f_1, f_2 : (\Z/q\Z)^4 \rightarrow \C$. Suppose that $\det b \neq 0$. Then for any $r \in (\Z/q\Z)^*$ we have \begin{equation}\label{triv-T}  |T_{f_1, f_2}(r)| \leq (q, \det b)^2 \Vert f_1 \Vert_2 \Vert f_2 \Vert_2.\end{equation} \end{lemma}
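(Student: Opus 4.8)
The plan is to view $T_{f_1,f_2}(r)$ as a bilinear form in $f_1,f_2$ and extract its operator norm. Writing $T_{f_1,f_2}(r) = q^2 \E_{x,y} f_1(x) f_2(y) e_q(rQ(x,y)) = \langle \overline{f_1}, \mathcal{K} f_2 \rangle$ (up to normalisation conventions), where $\mathcal{K}$ is the integral operator on $\ell^2((\Z/q\Z)^4)$ with kernel $K(x,y) = q^2 \E\text{-}\mathrm{normalised}\ e_q(rQ(x,y))$, the claimed bound is precisely the statement that the operator norm of $\mathcal{K}$ is at most $(q,\det b)^2$. So by Cauchy–Schwarz it suffices to bound $\|\mathcal{K}\|_{\mathrm{op}}$, and since $\mathcal{K}$ has a kernel of constant modulus, the natural route is to compute $\mathcal{K}\mathcal{K}^*$ and show it is $(q,\det b)^4$ times a projection, or more simply to show $\mathcal{K}$ is $(q,\det b)^2$ times a (partial) unitary.

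First I would reduce to the cross term. Since $e_q(rQ(x,y)) = e_q(r x^Tax)\, e_q(r x^Tby)\, e_q(r y^Tcy)$, the factors $e_q(rx^Tax)$ and $e_q(ry^Tcy)$ can be absorbed into $f_1$ and $f_2$ respectively without changing $\|f_1\|_2,\|f_2\|_2$; so WLOG $Q(x,y) = x^Tby$ and we must bound $q^2 \E_{x,y} f_1(x) f_2(y) e_q(r x^T b y)$. Now, for fixed $y$, the inner sum over $x$ is an additive character sum: $q^4 \E_x f_1(x) e_q(x^T (rb) y) = \widehat{f_1}(rby)$ in the appropriate normalisation, i.e. the operator is essentially "Fourier transform followed by the linear substitution $y \mapsto rby$." The Fourier transform on $(\Z/q\Z)^4$ is unitary (with the right normalisation the $q^2$ prefactor is exactly what makes it so), and $r$ is a unit so $y \mapsto ry$ is a bijection. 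The only obstruction to the whole thing being unitary is that $y \mapsto by$ need not be a bijection of $(\Z/q\Z)^4$: its image and the sizes of its fibres are controlled by the Smith normal form of $b$, and the relevant loss is exactly $(q,\det b)$ per "copy" — more precisely, pulling back by $b$ multiplies the $\ell^2$ norm by at most $\sqrt{\prod_i (q, d_i)}$ where $d_1,\dots,d_4$ are the elementary divisors of $b$, and $\prod_i (q,d_i) \le (q,\det b)^2$ since $\prod d_i = \pm\det b$...

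Let me restate that cleanly: I would put $b$ in Smith normal form $b = u d v$ with $u,v \in \GL_4(\Z/q\Z)$ (or work over $\Z$ and reduce), change variables absorbing $u,v$ into $f_1,f_2$, and reduce to $d$ diagonal with entries $d_1,\dots,d_4$, $\prod d_i \equiv \det b$. Then $T$ factors as a product of four one-dimensional sums $q^{1/2}\cdot q^{1/2}\E_{x_i,y_i} g_i(x_i) h_i(y_i) e_q(r d_i x_i y_i)$, and for each one the map $x_i \mapsto$ (its Fourier transform evaluated at $r d_i y_i$) has operator norm exactly $(q, d_i)$ — a direct computation, since summing a character $e_q(d_i x_i y_i)$ over $x_i$ gives $q$ if $q \mid d_i y_i$ and $0$ otherwise, picking out a coset of size $(q,d_i)$. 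Multiplying, $\|T\|_{\mathrm{op}} \le \prod_i (q,d_i) \le (q,\det b)^2$ (the last inequality because each $(q,d_i)$ divides $(q,\det b)$ and there is room to spare — in fact $\prod_i(q,d_i) \mid (q,\det b)^2$ is cleanest to see from $d_1 \mid d_2 \mid d_3 \mid d_4$).

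The main obstacle is just bookkeeping the normalisation constants (the $q^2$ in front of an $\E$ over $q^8$ points, versus the $\|\cdot\|_2$ which is also an $\E$-normalised $\ell^2$ norm) so that "Fourier transform is unitary" comes out exactly, and then tracking the Smith-normal-form loss to land on the exponent $2$ rather than something weaker like $4$. Neither is deep; the genuinely substantive inequality is the elementary-divisor bound $\prod_{i=1}^4 (q,d_i) \le (q,\det b)^2$, which follows from $d_1 \mid d_2 \mid d_3 \mid d_4$ and $d_1 d_2 d_3 d_4 \mid \det b$ (so $(q,d_1)(q,d_2) \mid (q,d_1d_2) \cdot$ something... ) — I would verify it via the prime-by-prime valuation inequality $\sum_i \min(v_p(q), v_p(d_i)) \le 2\min(v_p(q), \sum_i v_p(d_i))$, which holds because the two largest terms $v_p(d_3),v_p(d_4)$ already dominate, being $\ge$ the two smallest.
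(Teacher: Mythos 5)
Your overall route --- absorbing the diagonal modulations, reducing to the bilinear form $x^T b y$, putting $b$ in Smith normal form over $\Z$ and reducing mod $q$, then tensoring one-dimensional Fourier estimates --- is a genuinely different and more structural argument than the paper's, which simply applies Cauchy--Schwarz once and bounds $\#\{h : bh \equiv 0 \pmod q\}$ via $\adj(b)$. Your approach has the merit of identifying the exact operator norm $\sqrt{\prod_{i=1}^4 (q,d_i)}$, and the Smith-normal-form picture is conceptually transparent. However, as written the final two steps both contain errors, and those errors happen to pull in opposite directions.

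First, the one-dimensional operator norm is $\sqrt{(q,d_i)}$, not $(q,d_i)$. The ``direct computation'' you cite --- summing $e_q(d_i x_i y_i)$ over $x_i$ gives $q$ when $q \mid d_i y_i$, picking out a subgroup of size $(q,d_i)$ --- is a computation of $\mathcal{K}_i^* \mathcal{K}_i$, and shows it equals $(q,d_i)$ times a projection, so $\Vert \mathcal{K}_i \Vert_{\mathrm{op}} = \sqrt{(q,d_i)}$. (You in fact say ``at most $\sqrt{\prod_i (q,d_i)}$'' in your first pass, but then drop the square root in your ``restate cleanly'' paragraph.) Second, the inequality $\prod_{i=1}^4 (q,d_i) \leq (q,\det b)^2$, and the valuation inequality $\sum_i \min(v_p(q),v_p(d_i)) \leq 2\min(v_p(q), \sum_i v_p(d_i))$, are both false in general: take $q = p^3$ and $d_1 = \cdots = d_4 = p^2$, so that the left side is $p^8$ while the right side is $(p^3)^2 = p^6$. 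The reasoning ``the two largest terms already dominate'' breaks down precisely when $v_p(q)$ is intermediate between $v_p(d_i)$ and $\sum_i v_p(d_i)$.

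Fortunately the corrected version is easier than what you attempted: once the per-factor norm is $\sqrt{(q,d_i)}$, you need only $\prod_i (q,d_i) \leq (q,\det b)^4$, and this is immediate since $d_i \mid \det b$ forces $(q,d_i) \mid (q,\det b)$ for each $i$. Thus $\Vert T \Vert_{\mathrm{op}} \leq \sqrt{\prod_i (q,d_i)} \leq (q,\det b)^2$, recovering the lemma (and in fact the example above, extended to $8 \times 8$, shows the exponent $2$ cannot be lowered to $1$, so there is no paradox in your sharper-looking intermediate claims being false).
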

\begin{proof}
Modifying $f_1(x)$ to $f_1(x) e_q(r x^T a x)$ and $f_2(y)$ to $f_2(y) e_q(ry^T c y)$, it suffices to show that 
\[ \E_{x,y \in (\Z/q\Z)^4} f_1(x) f_2(y) e_q( rx^T b y) \leq (q, \det b)^2 q^{-2} \Vert f_1 \Vert_2 \Vert f_2 \Vert_2.\] By Cauchy-Schwarz, it suffices to show that 
\[ \E_{y,y' \in (\Z/q\Z)^4} f_2(y) \overline{f_2(y')} \E_{x \in (\Z/q\Z)^4}  e_q(rx^T b (y - y')) \leq (q, \det b)^4 q^{-4}\Vert f_2 \Vert_2^2.\] By orthogonality, and since $(r,q) = 1$, the left-hand side is
\begin{align*} \E_{y, y' \in (\Z/q\Z)^4} f_2(y) & \overline{f_2(y')} 1_{b(y - y') \equiv 0 \mdsub{q}} \\ & = q^{-4}\sum_{\substack{h \in (\Z/q\Z)^4 \\ bh \equiv 0 \mdsub{q}}} \E_{y \in (\Z/q\Z)^4} f_2(y) \overline{f_2(y+h)} .\end{align*}
By Cauchy-Schwarz this is at most \[ q^{-4}\# \{ h \in (\Z/q\Z)^4 : bh \equiv 0 \md{q}\} \Vert f_2 \Vert_2^2,\]
and so it is enough to show that 
\begin{equation}\label{suffices-1} \# \{ h \in (\Z/q\Z)^4 : bh \equiv 0 \md{q}\} \leq (q, \det b)^4.\end{equation}
Now if $bh \equiv 0 \md{q}$ then, multiplying on the left by $\adj(b)$, we have $(\det b) h \equiv 0 \md{q}$, i.e. if $h = (h_1,h_2, h_3 ,h_4)$ then $q | (\det b) h_i$ . The number of choices of each $h_i$ is therefore $(q, \det b)$ and so \eqref{suffices-1} follows. This concludes the proof of \eqref{triv-T}.
\end{proof}

The bound in Lemma \ref{triv-bd} is best possible, at least when $(q, \det b) = 1$. To see this, let $\psi : (\Z/q\Z)^4 \rightarrow\C$ be any function with $\Vert \psi \Vert_2 = 1$, and take
\[ f_1(x) := q^2 \E_{y \in (\Z/q\Z)^4} \overline{\psi}(y) e_q(-rQ(x,y)), \quad f_2(x) := \psi(x) .\]
Then one may check using the orthogonality relations that 
\begin{equation}\label{sharp} |T_{f_1,f_2}(r)| = \Vert f_1 \Vert_2 = \Vert f_2 \Vert_2 = 1.\end{equation} A more conceptual explanation of this is as follows. First note that
\begin{equation}\label{t-unit} T_{f_1, f_2}(r) = \overline{\langle \overline{f}_1, \Phi f_2\rangle},\end{equation} where the map $\Phi : \ell^2((\Z/q\Z)^4) \rightarrow \ell^2((\Z/q\Z)^4)$ is given by \[ \Phi f(x) := q^2 \E_{y \in (\Z/q\Z)^4} f(y) e_q(r Q(x, y) ).\] One may then make the key observation that $\Phi$ is unitary (being a composition of invertible dilations, quadratic modulations and Fourier transform). Then we have $f_1 := \overline{\Phi \psi}$, $f_2 = \psi$, and the relations \eqref{sharp} are clear from \eqref{t-unit} and the unitary nature of $\Phi$. 

This also allows a very short (albeit ultimately equivalent) proof of Lemma \ref{triv-bd} in the case $(q, \det b) = 1$. Indeed, by Cauchy-Schwarz and unitarity we have
\[ |T_{f_1, f_2}(r)| = |\langle \overline{f}_1, \Phi f_2 \rangle| \leq \Vert f_1 \Vert_2 \Vert \Phi f_2 \Vert_2 = \Vert f_1 \Vert_2 \Vert f_2 \Vert_2.\]
\vspace*{6pt}
 
One may, using arguments similar to those below, use the bound obtained in Lemma \ref{triv-bd} to show that (roughly speaking)
\[ \int_{\theta \in \bigcup_{r \in (\Z/q\Z)^*} I_{r,q}} |S_{F_1, F_2}(\theta)|d\theta  \lessapprox X^6 q^{-1}  .\] Unfortunately the sum over $q$ does not converge and so this (just) fails to give the desired estimate Proposition \ref{minor-arcs-m2}. It is this, and the sharpness of Lemma \ref{triv-bd}, which ultimately explain the failure of the classical circle method to handle the problem of
quadratic forms in 8 prime variables.

To get around this issue we introduce the following improvement on \eqref{triv-bd} when an average over $r$ is included (at least when $q$ is squarefree and has no small prime factors, and $Q$ is generic).

\begin{proposition}\label{zq-main-prop} There is an absolute constant $\delta > 0$ with the following property. Suppose that $Q$ is generic. Then there is $p_0(Q)$ such that if $q$ is squarefree and with all prime factors greater than $p_0(Q)$, then we have
 \[ \frac{1}{\phi(q)}\sum_{r \in (\Z/q\Z)^*} |T_{f_1, f_2}(r)| \ll q^{- \delta} \Vert f_1 \Vert_2 \Vert f_2 \Vert_2\] for any $f_1, f_2 \in \ell^2((\Z/q\Z)^4)$. 
\end{proposition}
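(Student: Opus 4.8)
The plan is to follow the roadmap laid out in Section~\ref{sec6}--\ref{sec11}, converting the problem about the complete exponential sums $T_{f_1,f_2}(r)$ into one about averages of matrix coefficients of the Weil representation. First I would record the identity, established in Section~\ref{sec7} and building on \eqref{t-unit}, expressing $T_{f_1,f_2}(r)$ (up to complex conjugation and the harmless passage from the operator $\Phi$ to a normalised Weil representation $\rho : \Sp_8(\Z/q\Z) \to \U(\ell^2((\Z/q\Z)^4))$, the requisite $\Z/q\Z$-theory being developed in Appendix~\ref{weil-app}) as the matrix coefficient $\langle f_1, \rho(g^{(r)}) f_2\rangle$, where $g^{(r)}$ is the dilate by $r$ of the symplectic matrix $g$ of \eqref{g-def-prelim}. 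Provided $p_0(Q)$ is taken large enough that every prime $p > p_0(Q)$ divides neither $\det b$ nor a resultant guaranteeing that $\Delta \pmod p$ still has four distinct eigenvalues avoiding $0$ and $-1$, all of these objects are well defined for squarefree $q$ with prime factors $> p_0(Q)$, and genericity is preserved modulo $q$. The task then becomes bounding $\frac{1}{\phi(q)}\sum_{r\in(\Z/q\Z)^*} |\langle f_1, \rho(g^{(r)})f_2\rangle|$ for unit vectors $f_1, f_2$, i.e. an integral $\int_G |\langle f_1,\rho(x)f_2\rangle|\,d\mu(x)$ with $G = \Sp_8(\Z/q\Z)$ and $\mu$ the uniform probability measure on $\{g^{(r)} : r\in(\Z/q\Z)^*\}$.

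Next I would feed this into the general matrix-coefficient estimate of Section~\ref{sec8}, which (via an amplification argument) reduces a power saving for such an average to two inputs: (1) a bounded-order symmetrised convolution power of $\mu$ is close to the uniform measure on the subgroup $\Gamma_q := \langle\, g^{-(r)} g^{(s)} : r,s \in (\Z/q\Z)^* \,\rangle$; and (2) $\rho|_{\Gamma_q}$ has no low-dimensional irreducible subrepresentation (quasirandomness). For (1), I would exploit that $\mu$ is parametrised by very simple rational functions of $r$, so that the relevant convolution powers are supported on algebraic sets of controlled complexity; following the Lang--Weil argument of Section~\ref{unif-conv-power} (in the spirit of Tao's spectral-proof blog post) one obtains rapid equidistribution on $\Gamma_q$ without needing to know $\Gamma_q$ a priori. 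Because $\Gamma_q \cong \prod_{p\mid q}\Gamma_p$ and the Weil representation of a product is (essentially) the tensor product of the factor representations, both the convergence and the quasirandomness claims for general squarefree $q$ reduce to the prime case $q=p$.

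For the prime case I would carry out the two remaining pieces of work. In Section~\ref{sec10} I would identify $\Gamma_p$ explicitly, showing it is a $\GL_8(\Z/p\Z)$-conjugate of $\SL_2(\F_p[\Delta])$, which under the genericity hypothesis decomposes as a product of copies of $\SL_2$ over the residue fields at the distinct eigenvalues of $\Delta$; here the hypotheses that the four eigenvalues are distinct and avoid $\{0,-1\}$ enter crucially, as do the lemmas of Goursat and Ribet on subgroups of direct products together with the basic facts about $\SL_2(k)$, $k$ a finite field, collected in Appendix~\ref{sl2-app}. Then, since for such a $\Gamma_p$ the representation theory of $\SL_2$ of a finite field forces any small irreducible constituent of $\rho|_{\Gamma_p}$ to be trivial, the quasirandomness statement (2) reduces to showing that $\Gamma_p$ acting via $\rho$ on $\ell^2((\Z/p\Z)^4)$ has no nonzero fixed vector; this I would verify in Section~\ref{sec11} by a direct and explicit computation, using nothing about representation theory. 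Assembling (1) and (2) into the Section~\ref{sec8} bound produces the claimed saving $q^{-\delta}$ with an absolute $\delta > 0$.

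The main obstacle I expect is the prime-case input (2): first, pinning down $\Gamma_p$ on the nose — in particular ruling out the naive guess that the elements $g^{-(r)}g^{(s)}$ generate all of $\Sp_8(\Z/p\Z)$ and instead locating them inside a conjugate of $\SL_2(\F_p[\Delta])$ of size $\sim p^{12}$ — which is a delicate structural computation inside $\SL_2$, and second, the explicit verification that the Weil representation has no $\Gamma_p$-fixed vector. The latter is a genuinely necessary piece of input rather than a formality: a fixed vector would be exactly the obstruction to any power saving, so the whole approach stands or falls on this step.
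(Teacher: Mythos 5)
Your proposal is correct and follows essentially the same approach as the paper: reinterpreting the exponential sums as matrix coefficients of the Weil representation, applying the amplification bound of Section~\ref{sec8}, and establishing the two inputs (equidistribution via Lang--Weil in Section~\ref{unif-conv-power}, and quasirandomness of $\rho|_{\Gamma_q}$ via the identification of $\Gamma_p$ with a conjugate of $\SL_2(\F_p[\Delta])$ and the no-fixed-vector argument in Sections~\ref{sec10}--\ref{sec11}). This is a faithful outline of the paper's own proof, not a genuinely different route.
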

The proof of this proposition occupies most of the rest of the paper. The remainder of this section is devoted to deriving Proposition \ref{minor-arcs-m2} from it.

First we observe that Lemma \ref{triv-bd} and Proposition \ref{zq-main-prop} have a fairly straightforward application to the sums $S_{F_1, F_2}(\theta)$ for $\theta = \frac{r}{q}$, which we record now.

\begin{corollary}\label{saq-cors}
Suppose that $F_1, F_2 : [X]^4 \rightarrow \C$ are $1$-bounded. Suppose that $\det b \neq 0$. Then we have the pointwise bound 
\begin{equation}\label{pointwise-s} |S_{F_1, F_2}(\frac{r}{q})| \ll X^8 q^{-2}.\end{equation}  
Suppose additionally that $q$ is squarefree and has no prime factors of size $\leq p_0(Q)$, and that $Q$ is generic. Suppose that $q \leq KX$. Then 
\begin{equation}\label{average-s} \sum_{r \in (\Z/q\Z)^*} |S_{F_1, F_2}(\frac{r}{q})| \ll  X^8 q^{-1 - \delta}. \end{equation}
\end{corollary}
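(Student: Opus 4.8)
\emph{The plan.} The idea is that at a rational point $\theta=r/q$ the sum $S_{F_1,F_2}(r/q)$ is, up to an explicit power of $q$, the complete exponential sum $T_{f_1,f_2}(r)$ attached to the reductions of $F_1,F_2$ modulo $q$, so that the two estimates follow by feeding this into Lemma~\ref{triv-bd} and Proposition~\ref{zq-main-prop} respectively. Precisely, for $u\in(\Z/q\Z)^4$ set $f_i(u):=\sum_{x\in[X]^4,\ x\equiv u\,\md{q}}F_i(x)$. Since $Q$ has integer coefficients, $e_q(rQ(x,y))$ depends only on $x,y\,\md{q}$, so partitioning the sum \eqref{sf1f2-def} (evaluated at $\theta=r/q$) according to the residues of $x$ and $y$ gives
\[ S_{F_1,F_2}(r/q)=\sum_{u,v\in(\Z/q\Z)^4}f_1(u)f_2(v)e_q(rQ(u,v))=q^6\,T_{f_1,f_2}(r), \]
the last equality being the definition \eqref{t-def} together with the normalisation of the average $\E_{x,y\in(\Z/q\Z)^4}$.

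\emph{The one quantitative input.} Everything then hinges on the fact that the periodised weights $f_i$ have $\ell^2$-norm much smaller than $1$, namely $\|f_i\|_2\ll X^4 q^{-4}$ for $q\le X$ (the range relevant to us). This is elementary: since $F_i$ is $1$-bounded, $|f_i(u)|\le\#\{x\in[X]^4:x\equiv u\,\md{q}\}\le(\lfloor X/q\rfloor+1)^4\ll(X/q)^4$, while $\sum_{u}|f_i(u)|\le\sum_{x\in[X]^4}|F_i(x)|\le X^4$; hence
\[ \|f_i\|_2^2=q^{-4}\sum_u|f_i(u)|^2\le q^{-4}\Big(\max_u|f_i(u)|\Big)\Big(\sum_u|f_i(u)|\Big)\ll q^{-4}(X/q)^4X^4=X^8q^{-8}. \]

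\emph{Concluding both parts.} For \eqref{pointwise-s}, Lemma~\ref{triv-bd} gives $|S_{F_1,F_2}(r/q)|=q^6|T_{f_1,f_2}(r)|\le q^6(q,\det b)^2\|f_1\|_2\|f_2\|_2\ll q^6(X^4q^{-4})^2=X^8q^{-2}$, using $(q,\det b)\le|\det b|=O(1)$ (and that dependence on $Q$ is suppressed). For \eqref{average-s}, the extra hypotheses — $q$ squarefree with all prime factors exceeding $p_0(Q)$, and $Q$ generic — are precisely those of Proposition~\ref{zq-main-prop}; summing the displayed identity over $r\in(\Z/q\Z)^*$ and invoking that proposition yields
\[ \sum_{r\in(\Z/q\Z)^*}|S_{F_1,F_2}(r/q)|=q^6\phi(q)\cdot\frac1{\phi(q)}\sum_{r\in(\Z/q\Z)^*}|T_{f_1,f_2}(r)|\ll q^6\phi(q)\,q^{-\delta}\|f_1\|_2\|f_2\|_2\ll q^7q^{-\delta}(X^4q^{-4})^2=X^8q^{-1-\delta}, \]
using $\phi(q)\le q$. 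I expect no real obstacle here: this is a packaging exercise, and the only genuine point is the smallness $\|f_i\|_2\ll X^4q^{-4}$ of the periodised weights — it is this, rather than any cancellation internal to $S_{F_1,F_2}$ itself, that converts the (sharp) bound of Lemma~\ref{triv-bd} and the averaged bound of Proposition~\ref{zq-main-prop} into the stated estimates. The one thing to watch is the range of $q$: the bound on $\|f_i\|_2$ above, and hence the corollary, is used in the regime $q\le X$.
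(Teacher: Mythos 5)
Your argument is correct and lands on exactly the right estimates; the only difference from the paper's proof is in how the reduction to a complete exponential sum over $(\Z/q\Z)^4$ is packaged. The paper covers $[X]^4\times[X]^4$ by $O((X/q)^8)$ boxes $(x_0+[q]^4)\times(y_0+[q]^4)$; on each box the restrictions of $F_1,F_2$ are $1$-bounded functions on $(\Z/q\Z)^4$ with $\|f_i\|_2\le 1$, Lemma~\ref{triv-bd} (resp.\ Proposition~\ref{zq-main-prop}) is applied once per box, and the $X^8q^{-8}$ saving comes from the box count. You instead fold $F_i$ over residue classes once and for all, so the same saving lives in $\|f_1\|_2\|f_2\|_2\ll X^8q^{-8}$ (your $\sum_u|f_i(u)|^2\le(\max_u|f_i(u)|)(\sum_u|f_i(u)|)$ bound is a clean way to obtain it), and a single application of the same two inputs finishes. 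The two routes are numerically identical; yours is marginally more economical since it avoids summing a per-box estimate. You are also right to flag the implicit restriction $q\ll X$: the corollary states $q\le KX$ only for \eqref{average-s}, but the covering step in the paper's proof of \eqref{pointwise-s} (just like your bound on $\|f_i\|_2$) needs $q\ll X$ to make the box count $\ll(X/q)^8$; in the paper this is harmless because the corollary is only ever invoked in the range $q\le KX$.
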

\begin{proof}
Extend $F_1, F_2$ to functions on $\Z$ by defining them to be $0$ outside of $[X]$. Fix $x_0, y_0 \in q\Z^4$. Then
\begin{align*} \sum_{\substack{x \in x_0 + [q]^4\\ y \in y_0 + [q]^4}} & F_1(x) F_2(y) e_q(rQ(x,y))  \\ & = \sum_{x', y' \in [q]^4} F_1(x_0 + x') F_2(y_0 + y') e_q(r Q(x_0 + x', y_0 + y')).
\end{align*}
Since $x_0, y_0$ are both multiples of $q$ we have $Q(x_0 + x', y_0 + y') \equiv Q(x', y') \md{q}$ and therefore, recalling the definition \eqref{t-def},
 \begin{equation}\label{sum-t-interpret} \sum_{\substack{x \in x_0 + [q]^4\\ y \in y_0 + [q]^4}} F_1(x) F_2(y) e_q(rQ(x,y)) =  q^6  T_{f_1, f_2}(r)\end{equation}
where
 \[ f_1(x') := F_1(x_0 + x')  \quad \mbox{and} \quad f_2(y') := F_2(y_0 + y') .\] 
By Lemma \ref{triv-bd} we therefore have
 \[ \big| \sum_{\substack{x \in x_0 + [q]^4\\ y \in y_0 + [q]^4}} F_1(x) F_2(y) e_q(rQ(x,y)) \big| \leq q^6 (\det b)^2.\]
Covering the range $[X]^4 \times [X]^4$ by $\leq (\frac{X}{q} + 1)^8 \ll X^8 q^{-8}$ boxes of the form $(x_0 + [q]^4) \times (y_0 + [q]^4)$ gives \eqref{pointwise-s}.
 
To obtain \eqref{average-s}, we instead apply Proposition \ref{zq-main-prop} to \eqref{sum-t-interpret}, obtaining
\[ \sum_{r \in (\Z/q\Z)^*} | \sum_{x \in x_0 + [q]^4, y \in y_0 + [q]^4} F_1(x) F_2(y) e_q(rQ(x,y))| \ll q^{6 - \delta} \phi(q) \leq q^{7 - \delta}.\] Dividing into $O(X^8 q^{-8})$ boxes as before gives \eqref{average-s}.
\end{proof}

\begin{proof}[Proof of Proposition \ref{minor-arcs-m2}, assuming Proposition \ref{zq-main-prop}] Recall that 
\[ \mathfrak{m}_2 = \bigcup_{\substack{M' \leq q \leq KX \\ r \in (\Z/q\Z)^*}} I_{r,q},\] with $I_{r,q}$ as defined in \eqref{irq-def}, $M' = \log^{C_2} X$ (with $C_2$ as described in \eqref{q-def}), and $K = 8 \max_{ij} |b_{ij}|$ being a constant associated to the form $Q$. 

Therefore the bound we are trying to prove is
\begin{equation}\label{m2-rpt} \sum_{M' < q \leq KX} \sum_{r \in (\Z/q\Z)^*} \int_{|\eta| \leq M/X^2} |S_{F_1, F_2}(\frac{r}{q} + \eta)| d\eta \ll_A X^6 \log^{-A - 8} X.\end{equation}We begin by using some Fourier analysis to handle the inner integral over $\eta$. 
Recall the definition \eqref{sf1f2-def} of $S_{F_1, F_2}$, that is to say
\[ S_{F_1, F_2}(\theta) := \sum_{x, y \in [X]^4} F_1(x) F_2(y) e(\theta Q(x, y)).\]
Let $w : \R^2 \rightarrow \R$ be some fixed smooth compactly-supported function with $w(u,v) = 1$ for $(u,v) \in [0,1]^2$ and set, for any real parameter $\lambda$,
\begin{equation}\label{W-def} W_{\lambda}(u,v) := w(u,v) e(\lambda Q(u,v)).\end{equation}
Then
\begin{equation}\label{to-fourier} S_{F_1, F_2}(\frac{r}{q} + \eta) = \E_{x,y \in [X]^4} F_1(x) F_2(y) e_q(r Q(x,y)) W_{\eta X^2} (\frac{x}{X}, \frac{y}{X} ).\end{equation}
Now by integration by parts and Leibniz's rule we have
\[ |\hat{W}_{\lambda}(\xi, \xi')| \ll |\xi|^{-2} |\xi'|^{-2} \Vert \frac{\partial^4 W_{\eta}}{\partial^2 u \partial^2 v} \Vert_1 \ll \max(1, |\lambda|^4) |\xi|^{-2} |\xi'|^{-2} .\]
Since we also have the trivial bound 
\[ |\hat{W}_{\lambda}(\xi, \xi')| \leq \Vert W_{\lambda} \Vert_1 = \Vert w \Vert_1 \ll  1\] it follows that 
\begin{equation}\label{W-ell1} \Vert \hat{W}_{\lambda} \Vert_1 \ll \max(1, |\lambda|)^4.\end{equation}
By Fourier inversion
\[ W_{\lambda}(u,v) = \int_{\R} \hat{W}_{\lambda}(\xi, \xi') e(\xi u + \xi' v) d\xi d\xi';\] substituting into \eqref{to-fourier} gives
\[ S_{F_1, F_2}(\frac{r}{q} + \eta) = \int \hat{W}_{\eta X^2}(\xi, \xi') S_{F_{1,\xi}, F_{2,\xi'}} (\frac{r}{q}) d\xi d\xi',\] where
 \[ F_{1,\xi}(x) := F_1(x) e(\xi x/X), \quad F_{2,\xi'}(y) := F_2(y) e(\xi' y/X).\]
 Therefore
 \begin{align}\nonumber\sum_{M' < q \leq KX}& \sum_{r \in (\Z/q\Z)^*} |S_{F_1, F_2}(\frac{r}{q} + \eta)|  \leq \\ & \int_{\R} | \hat{W}_{\eta X^2}(\xi, \xi') | \sum_{M' < q \leq KX}\sum_{r \in (\Z/q\Z)^*} |S_{F_{1,\xi}, F_{2,\xi'}} (\frac{r}{q})| d\xi d\xi'.\label{eq37}\end{align}
We claim the estimate
\begin{equation}\label{m2-enough} \sum_{M' < q \leq KX} \sum_{r \in (\Z/q\Z)^*} |S_{F'_1, F'_2}(\frac{r}{q})| \ll X^8 (M')^{-\delta/100},\end{equation} uniformly for all $1$-bounded $F'_1, F'_2$, where $\delta$ is the exponent appearing in Proposition \ref{zq-main-prop}. Assuming this claim, \eqref{W-ell1} and \eqref{eq37} then imply that
\[ \sum_{M' < q \leq KX}\sum_{r \in (\Z/q\Z)^*} |S_{F_1, F_2}(\frac{r}{q} + \eta)| \ll X^{8} (M')^{-\delta/100} \max(1, |\eta| X^2)^4 .\]  Then, integrating over $|\eta| \leq M/X^2$ we obtain
\[ \int_{\mathfrak{m}_2} |S_{F_1, F_2}(\theta)|d\theta \ll  X^6 M^5(M')^{-\delta/100}  \ll_A X^6 (\log X)^{-A-8}.\] 
For the last step, we recall that we chose $M = \log^{C_1} X$, $M' = \log^{C_2} X$ with the particular choice of $C_1, C_2$ specified at the start of Section \ref{sec3}. This completes the proof of Proposition \ref{minor-arcs-m2}, assuming the claim \eqref{m2-enough}.

Now we must establish \eqref{m2-enough}. The pointwise bound \eqref{pointwise-s} is not good enough, but we do have the improved average bound \eqref{average-s}, albeit only for squarefree $q$ with no small prime factors. Most $q$ do not have this form, and so we need the following lemma to allow us to reduce matters to the consideration of to those that do.

\begin{lemma}\label{factor-lemma}
Suppose that $q = q_0 q_1$ with $(q_0, q_1) = 1$. Then 
\[ \sup_{F_1, F_2} \sum_{r \in (\Z/q\Z)^*} |S_{F_1, F_2}(\frac{r}{q})| \ll q_1^{9} \sup_{F_1, F_2} \sum_{r_0 \in (\Z/q_0 \Z)^*} |S_{F_1, F_2}(\frac{r_0}{q_0})|,\] where in both cases the $\sup$ is over $1$-bounded functions $F_1, F_2 : [X]^4 \rightarrow \C$.
\end{lemma}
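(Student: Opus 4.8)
\emph{Plan.} The idea is to use the Chinese Remainder Theorem to split the frequency $\tfrac{r}{q}$ into a $q_0$-part and a $q_1$-part, and then to absorb the $q_1$-part entirely into the cutoff functions. The only obstruction to doing this directly is the cross term $x^{T} b y$ in $Q$, which entangles the $x$ and $y$ variables; the one genuinely non-routine ingredient is a Fourier-type decoupling of this term modulo $q_1$, at the cost of only $O(q_1^{4})$ extra summands.

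First I would set up the splitting. Since $(q_0,q_1)=1$, writing $\tfrac{r}{q} \equiv \tfrac{s}{q_0} + \tfrac{r_1}{q_1} \pmod 1$ with $s \equiv r q_1^{-1} \pmod{q_0}$ and $r_1 \equiv r q_0^{-1} \pmod{q_1}$ sets up a bijection between $(\Z/q\Z)^{*}$ and $(\Z/q_0\Z)^{*} \times (\Z/q_1\Z)^{*}$. Expanding $Q(x,y) = x^{T} a x + x^{T} b y + y^{T} c y$ gives
\[ e\!\left(\tfrac{r}{q}Q(x,y)\right) = e\!\left(\tfrac{s}{q_0}Q(x,y)\right) e\!\left(\tfrac{r_1}{q_1}x^{T}ax\right) e_{q_1}\!\left(r_1 x^{T} b y\right) e\!\left(\tfrac{r_1}{q_1}y^{T}cy\right). \]
The factor $e(\tfrac{r_1}{q_1}x^{T}ax)$ is a $1$-bounded function of $x$ alone and $e(\tfrac{r_1}{q_1}y^{T}cy)$ a $1$-bounded function of $y$ alone, so both can be absorbed into $F_1$ and $F_2$ respectively without leaving the class of $1$-bounded functions.

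To handle $e_{q_1}(r_1 x^{T} b y)$ I would use the observation that $r_1 x^{T} b y \equiv (r_1 b^{T} x)^{T} y \pmod{q_1}$, so this factor depends on $x$ only through the value $h := r_1 b^{T} x \bmod q_1 \in (\Z/q_1\Z)^{4}$, which takes at most $q_1^{4}$ values. Splitting the $x$-summation into the fibres of the linear map $x \mapsto r_1 b^{T} x \bmod q_1$, on the fibre $\{h = h_0\}$ the factor becomes the pure $y$-modulation $e_{q_1}(h_0^{T} y)$, which is absorbed into $F_2$, while the fibre indicator is a $1$-bounded function of $x$ which is absorbed into $F_1$. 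This yields an identity
\[ S_{F_1,F_2}\!\left(\tfrac{s}{q_0} + \tfrac{r_1}{q_1}\right) = \sum_{h_0 \in (\Z/q_1\Z)^{4}} S_{G_1^{(r_1,h_0)},\, G_2^{(r_1,h_0)}}\!\left(\tfrac{s}{q_0}\right), \]
where $G_1^{(r_1,h_0)}, G_2^{(r_1,h_0)}$ are explicit $1$-bounded functions on $[X]^{4}$ depending on $r_1,h_0$ but crucially not on $s$ --- that independence is what makes the argument go through.

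Finally, taking absolute values, applying the triangle inequality over the $\le q_1^{4}$ values of $h_0$, then summing over $s \in (\Z/q_0\Z)^{*}$ and $r_1 \in (\Z/q_1\Z)^{*}$, and bounding each inner sum $\sum_{s}|S_{G_1,G_2}(s/q_0)|$ by $\sup_{G_1,G_2}\sum_{r_0 \in (\Z/q_0\Z)^{*}}|S_{G_1,G_2}(r_0/q_0)|$, one obtains
\[ \sum_{r \in (\Z/q\Z)^{*}} |S_{F_1,F_2}(\tfrac{r}{q})| \ \le\ q_1^{5}\, \sup_{G_1,G_2}\sum_{r_0 \in (\Z/q_0\Z)^{*}} |S_{G_1,G_2}(\tfrac{r_0}{q_0})|, \]
and then taking the supremum over $F_1,F_2$ on the left gives the lemma (in fact with the better exponent $5$ in place of $9$; the stated form is more than enough for the application in \eqref{m2-enough}). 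I do not anticipate any real difficulty: once the decoupling is in place the rest is bookkeeping with $1$-bounded functions, and the generous slack in the exponent means no care with fibre counts or constants is required.
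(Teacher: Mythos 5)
Your proof is correct, but it takes a genuinely different route from the paper's. Both arguments begin with the same Chinese Remainder decomposition $\frac{r}{q} = \frac{r_0}{q_0} + \frac{r_1}{q_1}$ and the same general strategy of absorbing the $q_1$-dependence into $1$-bounded cutoff functions. From there they diverge: the paper foliates the $x$-sum and the $y$-sum jointly into residue classes $(u,v) \in (\Z/q_1\Z)^4 \times (\Z/q_1\Z)^4$, reducing $Q(x,y) \bmod q_1$ to the constant $Q(u,v)$ on each class; this contributes $q_1^8$ from the choices of $(u,v)$ plus $q_1$ from the $r_1$-sum, hence the exponent $9$. You instead absorb the pure-$x$ piece $e(\tfrac{r_1}{q_1} x^T a x)$ and the pure-$y$ piece $e(\tfrac{r_1}{q_1} y^T c y)$ directly, and handle only the bilinear cross term by foliating $x$ over the $\le q_1^4$ fibers of the linear map $x \mapsto r_1 b^T x \bmod q_1$, on each of which the cross term becomes a pure $y$-modulation. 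This contributes $q_1^4$ from the fibers plus $q_1$ from $r_1$, giving the sharper exponent $5$. The improvement comes precisely from exploiting the bilinear structure of the cross term rather than conditioning both $x$ and $y$ fully mod $q_1$; it is a nicer argument, though the exponent is inconsequential for the downstream application \eqref{m2-enough}, as both you and the paper note.
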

\begin{proof}
By the Chinese remainder theorem we have 
\begin{equation}\label{chinese-1} \sum_{r \in (\Z/q\Z)^*} |S_{F_1, F_2}(\frac{r}{q})|  = \sum_{\substack{r_0 \in (\Z/q_0\Z)^* \\ r_1 \in (\Z/q_1\Z)^*}} |S_{F_1, F_2}(\frac{r_0}{q_0} + \frac{r_1}{q_1})|. \end{equation}
Foliating into residue classes modulo $q_1$, we have
\begin{align*} S_{F_1, F_2}(& \frac{r_0}{q_0}  + \frac{r_1}{q_1}) \\ & = \sum_{u,v \in (\Z/q_1\Z)^4} e_{q_1}(r_1 Q(u,v)) \sum_{x,y \in [X]^4} F_{1,u}(x)F_{2,v}(y)  e_{q_0}(r_0 Q(x,y))\end{align*}
where $F_{1,u}(x) := F_1(x) 1_{x \equiv u \mdsub{q_1}}$, $F_{2,v}(y) := F_2(y) 1_{y \equiv v \mdsub{q_1}}$ and so by \eqref{chinese-1}
\begin{align*} \sum_{r \in (\Z/q\Z)^*} & |S_{F_1, F_2}(\frac{r}{q})| \\ & \leq q_1^9 \sup_{u,v} \sum_{r_0 \in (\Z/q_0\Z)^*} \big| \sum_{x,y \in [X]^4} F_{1,u}(x) F_{2,v}(y) e_{q_0}(r_0 Q(x,y))\big| \\ & = q_1^9 \sup_{u,v} \sum_{r_0 \in (\Z/q_0\Z)^*} |S_{F_{1,u}, F_{2,v}}(\frac{r_0}{q_0})|.\end{align*} The lemma follows.
\end{proof}

Let us turn to the actual proof of \eqref{m2-enough}.  Let $p_0 = p_0(Q)$ be the threshold appearing in Proposition \ref{zq-main-prop}. For any $q$, write $q_0$ for the product of all primes $p > p_0$ which divide $q$ precisely once, and set $q_1 := q/q_0$, thus $q_1$ is the product  of all prime powers $p^j \Vert q$ with $p \leq p_0$ or $j \geq 2$. Note that $q_0, q_1$ are coprime. By Lemma \ref{factor-lemma} and \eqref{pointwise-s}, \eqref{average-s} we have
\[ \sum_{r \in (\Z/q\Z)^*} |S_{F_1, F_2}(\frac{r}{q}) | \ll X^8 \min(q^{-1}, q_1^9 q_0^{-1 - \delta}) \leq X^8 \min(q^{-1}, q_1^{11} q^{-1 - \delta}).\]
It therefore suffices to prove that 
\begin{equation}\label{to-est} \sum_{q > M'} \min(q^{-1}, q_1^{11} q^{-1 - \delta}) \ll (M')^{-\delta/100}.\end{equation}
The contribution from $q$ with $q_1 < q^{\delta/22}$ is acceptable (using the second term in the $\min$). 

If $q_1 \geq q^{\delta/22}$ then suppose $q_1 = \prod_p p^{v_p}$ with $v_p \geq 2$ for $p \geq p_0$. Set $q_2 := \prod_p p^{\lfloor v_p/2\rfloor}$. Then $q_2^2 | q$. Moreover, if $v_p \geq 2$ then $p^{\lfloor v_p/2\rfloor} \geq p^{v_p/3}$, so $q_2 \geq c q^{\delta/66}$ (with $c > 0$ depending only on $p_0$). Thus the contribution of these $q$ to \eqref{to-est} can be bounded by

\[ \sum_{d > c(M')^{\delta/66}} \sum_{\substack{q\ll d^{66/\delta} \\ d^2 | q }} q^{-1} \ll  \sum_{d > c(M')^{\delta/66}} \frac{\log d}{d^2} \ll (M')^{-\delta/100}.\]
This concludes the proof.\end{proof}

\section{Exponential sums as matrix coefficients on \texorpdfstring{$\Sp_8(\Z/q\Z)$}{Sp8(Z/qZ)}}\label{sec7}

The remainder of the paper is occupied with the proof of Proposition \ref{zq-main-prop}, the statement of which the reader may wish to recall at this point.

We remarked after the proof of Lemma \ref{triv-bd} that we can write
\begin{equation}\label{inner-prod-form} T_{f_1,f_2}(r) = \overline{\langle \overline{f}_1, \Phi_r f_2\rangle},\end{equation} where
\[ \Phi_r f = q^2 \E_y f(y) e_q(r Q(x,y))\] is unitary.
The crucial observation which drives our whole argument is that the subgroup of $\U (\ell^2((\Z/q\Z)^4))$ (the group of all unitary operators on $\ell^2((\Z/q\Z)^4)$) generated by operators $\Phi$ of this type (over all $Q$) is rather small. Indeed, as we shall shortly see, it has size $q^{36 + o(1)}$. This means that the specific operators $\Phi_r$ (with $Q$ fixed but $r$ allowed to vary over $(\Z/q\Z)^*$) already occupy a reasonable portion of this group.

This group turns out to be the symplectic group $\Sp_8(\Z/q\Z)$. Let us recall what these groups are, in a very concrete way. Let $R = \Z/q\Z$ with $q$ odd. Then $\Sp_8(R)$ is a group of $8 \times 8$ matrices over $R$, which we will write in $2 \times 2$ block form with each block being a $4 \times 4$ matrix.

\begin{definition}
We define the symplectic group $\Sp_8(R)$ to be the group consisting of all $8 \times 8$ block matrices $g = \smallpmat{A}{B}{C}{D}$ with entries in $R$ and $A^T C = C^T A$, $B^T D = D^T B$ and $A^T D - C^T B = I$.
\end{definition}
Define
\begin{equation}\label{J-def} J := \pmat{0}{I}{-I}{0}.\end{equation}

It is an simple exercise to check that $g \in \Sp_8(R)$ if and only if $g^T J g = J$. In fact, this is the more usual definition of the sympletic group, but it suits us to be more explicit.

Note that if $g = \smallpmat{A}{B}{C}{D} \in \Sp_8(R)$ then $g$ is left-invertible with left-inverse $\smallpmat{D^T}{-B^T}{-C^T}{A^T}$. This is then, of course, also a right-inverse for $g$, and this gives us the additional relations
\[ AB^T = B A^T, \; CD^T = DC^T \quad  \mbox{and} \quad AD^T - BC^T = I\] for any symplectic matrix. 

We will also need the fact that $(\Z/q\Z)^*$ acts on $\Sp_8(\Z/q\Z)$ by ``dilation'' automorphisms. If $g = \smallpmat{A}{B}{C}{D}$ and if $r \in (\Z/q\Z)^*$ then we define
\begin{equation}\label{gr-def} g^{(r)} = \pmat{A}{r^{-1} B}{rC}{D}.\end{equation}  It is then easy to see that this gives an action of $(\Z/q\Z)^*$ on $\Sp_8(\Z/q\Z)$ by automorphisms.

Finally, we note for future reference (see, for example, \cite{neuhauser}) that
\begin{equation}\label{sp-8-size} |\Sp_8(\F_p)| = p^{16} (p^8 - 1)(p^6 - 1) (p^4 - 1)(p^2 - 1)  = (1 + o(1)) p^{36}.\end{equation}

Now we come to the key link between $\Sp_8$ and operators such as $\Phi_r$ in \eqref{inner-prod-form}, which stems from \cite{weil} and is thus known as the Weil representation (or, depending on the context, the Segal-Shale-Weil representation or the oscillator representation).

\begin{proposition}[Weil representation]\label{weil-prop}
Let $q$ be squarefree and odd. Then there is a unitary representation \[ \rho : \Sp_8(\Z/q\Z) \rightarrow \U(\ell^2((\Z/q\Z)^4))\] and a function $\xi : \Sp_8(\Z/q\Z) \rightarrow \{z \in \C : |z| = 1\}$ satisfying the following:
\begin{itemize}
\item \textup{(Dilations)} If $g = s(E)$ where $s(E) := \smallpmat{E}{0}{0}{E^{-T}}$ with $E$ invertible then
\[ \rho(g) f(x) = \xi(s(E)) f(E^{-1} x);\]
\item \textup{(Fourier transform)} If $g = J$ then
\[ \rho(g) f(x) =  \xi(J) q^2 \E_{y \in (\Z/q\Z)^4} f(y) e(x^T y);\]
\item \textup{(Quadratic modulations)} If $g = l(W)$ where $l(W) := \smallpmat{I}{0}{W}{I}$ with $W$ symmetric then 
\[ \rho(g) f(x) = \xi(l(W))e_q(-\frac{1}{2} x^T W x)f(x).\]
\end{itemize}
\end{proposition}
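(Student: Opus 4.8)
The plan is to bootstrap the finite-field theory of the Weil representation, which is well documented (e.g. in \cite{neuhauser} or the references there), to the ring $\Z/q\Z$ with $q$ squarefree and odd, using the Chinese Remainder Theorem. First, I would recall (or cite) the construction for $q = p$ prime: over $\F_p$ one has the Heisenberg group $\Heis(\F_p^4) = \F_p^4 \times \F_p^4 \times \F_p$ with a fixed nontrivial central character $\psi(t) = e_p(t)$, the Stone--von Neumann theorem gives a unique (up to isomorphism) irreducible representation with that central character realized on $\ell^2(\F_p^4)$, and $\Sp_8(\F_p)$ acts on $\Heis(\F_p^4)$ fixing the centre pointwise, hence by Stone--von Neumann intertwines this representation with its $g$-twist. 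This produces a projective representation of $\Sp_8(\F_p)$, and the standard fact (valid since $p$ is odd, so $2$ is invertible and there is a consistent choice of square roots / Weil index) is that the $2$-cocycle is trivial, giving an honest representation $\rho_p$; the intertwiner is pinned down up to a scalar of modulus $1$, which is exactly the freedom recorded by $\xi$. The three displayed formulas for $s(E)$, $J$ and $l(W)$ are the classical explicit formulas for the action on the generators of $\Sp_8$, and one checks them directly against the Heisenberg action $g\cdot(u,v,t) = (g(u,v),t)$: quadratic modulations and dilations are diagonal/permutation-type operators whose defining property is immediate, and the Fourier transform formula is the content of the statement that $\rho_p(J)$ intertwines $x\mapsto$ multiplication-type operators with $y\mapsto$ translation-type operators.

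Next, for general squarefree odd $q = \prod_{p\mid q} p$, I would use the ring isomorphism $\Z/q\Z \cong \prod_{p\mid q}\F_p$ (here squarefreeness is exactly what makes each local factor a field, which is why we restrict to this case), inducing $(\Z/q\Z)^4 \cong \prod_p \F_p^4$ and hence a unitary isomorphism $\ell^2((\Z/q\Z)^4) \cong \bigotimes_{p\mid q}\ell^2(\F_p^4)$, together with the group isomorphism $\Sp_8(\Z/q\Z) \cong \prod_{p\mid q}\Sp_8(\F_p)$. I would then simply define $\rho := \bigotimes_{p\mid q}\rho_p$ and $\xi := \prod_{p\mid q}\xi_p$ under these identifications. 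Unitarity is automatic from unitarity of each $\rho_p$ and of the tensor/CRT isomorphism. The three formulas then follow factor-by-factor: a block matrix $s(E)$, $J$, or $l(W)$ over $\Z/q\Z$ decomposes as the product of the corresponding blocks over each $\F_p$ (note $E$ invertible over $\Z/q\Z$ iff invertible over each $\F_p$, and $W$ symmetric likewise), and one checks that the additive character $e_q(\cdot)$ corresponds under CRT to $\prod_p e_p(\cdot)$ after the standard adjustment by units — more precisely, $e_q(x) = \prod_p e_p(q_p^{-1} x_p)$ for suitable units, and absorbing these units into $E$, $W$ and the normalization is harmless since the statement only asserts existence of \emph{some} such $\xi$. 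The factor $q^2 = \prod_p p^{4/4\cdots}$—here $q^2 = \prod_{p\mid q} p^2$ and $\ell^2$ uses the averaged inner product $\E_y$, so the Fourier normalization multiplies correctly across the tensor product.

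The main obstacle, and the only place real care is needed, is the cocycle/normalization bookkeeping: one must verify that the local projective representations can be simultaneously rigidified (the cocycle is a coboundary) and that the scalar ambiguities multiply consistently so that $\xi$ is well-defined as a function into the unit circle and the tensor product $\bigotimes\rho_p$ descends to an honest — not merely projective — representation of the product group. Since each factor $\Sp_8(\F_p)$ already carries an honest Weil representation (for $p$ odd), the product does too, so this is really a matter of organizing the classical facts rather than proving anything new; the subtle point is just that the normalizing scalars $\xi_p(g_p)$ are not canonical, so $\xi$ is only asserted to exist, which is all we need. The author states this is done in Appendix \ref{weil-app} "given the finite field statements," consistent with this plan; I would structure the argument there as (i) quote the finite-field construction and its three generator formulas, (ii) perform the CRT tensor decomposition, (iii) transport the three formulas through it, being explicit about how $e_q$ factors and how the units get absorbed.
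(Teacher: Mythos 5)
Your strategy — reduce to the prime case via CRT and a tensor product — is the same as the paper's (Appendix~\ref{weil-app}), and your awareness that the normalization of $e_q$ versus $\prod_p e_p$ is where the care lives is correct. But the proposed fix, ``absorbing these units into $E$, $W$ and the normalization,'' is where a real gap appears. If one takes the plain tensor product $\bigotimes_{p \mid q}\rho_p$, the quadratic-modulation formula comes out as a multiplication by $\prod_p e_p(-\tfrac12 x^T W x) = e_q(-\tfrac{\lambda}{2}x^T W x)$, where $\lambda \equiv q/p \pmod p$ for each $p$. This discrepancy from the target $e_q(-\tfrac12 x^T W x)$ is an $x$-dependent factor, not a unit scalar, so it cannot be pushed into $\xi$; and since $W$ (and likewise $E$) is the given group element in the statement, you cannot ``absorb'' $\lambda$ into it without changing which element you are representing. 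The same issue occurs with the $J$ formula, which would come out as a Fourier transform at the wrong scale. The paper's resolution, which your write-up does not identify, is to pre-twist each local factor by the dilation automorphism $\sigma_i : g \mapsto g^{(\lambda_i^{-1})}$ with $\lambda_i = q/p_i$, and only then take the tensor product; the resulting $\tilde\rho_i = \rho_i \circ \sigma_i$ may fail to be isomorphic to $\rho_i$ (depending on whether $\lambda_i$ is a square mod $p_i$), but the $s(\lambda_i I)$ factor that $\sigma_i$ inserts into $J^{\sigma_i}$ and the $\lambda_i^{-1}$ that it inserts into $l(W)^{\sigma_i}$ are exactly what cancels the CRT unit $\lambda$. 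So the right way to say what you were gesturing at is: change each local central character (equivalently, twist $\rho_p$ by the dilation by $\lambda_p^{-1}$) before tensoring, rather than trying to fix things after the fact.
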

\emph{Remarks.} \emph{1.} There is nothing special about $\Sp_8$ here; similar results hold for $\Sp_{2m}$ for any positive integer $m$. The Weil representation is well-known over $\R$ (where one needs to pass to the double cover of the symplectic group), but in finite situations it seems to me that it is only at all widely discussed over finite fields. In this case, the construction is given in detail in (for example) the paper \cite{neuhauser} of Neuhauser. The analogue of this in the lower-dimensional setting of $\Sp_2(\Z/p\Z) = \SL_2(\Z/p\Z)$ already contains the key ideas, and a very nice description of this may be found in the notes of Charlotte Chan \cite{chan-notes}, which I found helpful in preparing this material. It is not difficult to derive the case $q$ squarefree from the prime case, and we do this in Appendix \ref{weil-app}. 

\emph{2.} The phase $\xi$ can be given explicitly if desired. When $q = p$ is an odd prime, we can take $\xi(l(W)) = \xi(J) = 1$ and $\xi(s(E)) = \left( \frac{\det E}{p}\right)$, and the general squarefree case can then be deduced from the arguments in Appendix \ref{weil-app}. For details of these calculations (which are somewhat involved) see \cite{neuhauser}. In this paper, we will not need explicit values of $\xi$, and the mere existence is a much easier statement to prove, this being \cite[Theorem 4.3]{neuhauser}.

\emph{3.} If desired one can also add in the translations $f(x) \mapsto f(x - v)$ and the linear modulations $f(x) \mapsto e(-t^T x) f(x)$, getting an action by the ``Jacobi group'' $\Sp_8(\Z/q\Z) \ltimes \Heis_8(\Z/q\Z)$, where $\Heis_8$ is the Heisenberg group on $(\Z/q\Z)^8 \times \Z/q\Z$.

\emph{4.} It is not really correct to call $\rho$ ``the'' Weil representation. In the case $q = p$ a prime, further representations $\tilde\rho$ of the same dimension can be obtained by twisting with the dilation $\sigma(g) = g^{(r)}$, that is to say $\tilde\rho(g) := \rho(g^{(r)})$. When $r$ is not a square in $\F_p^*$, the dilation is an outer automorphism and it is known that $\tilde\rho \not\cong \rho$. (In the literature this would be described in terms of different central characters on the Heisenberg group giving different Weil representations, see \cite[Section 7]{neuhauser} or \cite[Proposition 4]{szechtman}). Thus there are two Weil representations of $\Sp_8(\Z/p\Z)$, and the one we are considering is a concrete realisation of one of them.

When $q = p_1 \cdots p_n$, one may obtain $2^n$ non-isomorphic representations of $\Sp_8(\Z/q\Z)$ by taking tensor products. The representation whose existence we assert in Proposition \ref{weil-prop} is one of these. However, it turns out \emph{not} to be simply the tensor product of the $\rho$s associated to each $p_i$: we must first apply some twists. The details are given in Appendix \ref{weil-app}.

\emph{5.} Even when $q = p$, the Weil representation is not irreducible. It splits into its actions on odd and even functions, which \emph{are} irreducible representations of degrees $\frac{1}{2}(p^4 \pm 1)$. We will not need this fact here. \vspace*{8pt}

Now that we have defined the Weil representation, we can interpret the exponential sums $T_{f_1, f_2}(r)$ as matrix coefficients. We begin with an important definition which will be relevant for the rest of the paper.

\begin{definition}[Symplectic element]\label{symplectic-element}
Suppose that $Q(x,y) = x^T a x + x^T b y + y^T c y$ is a quadratic form and that $\det b \neq 0$. Then we associate to $Q$ the element $g = g(Q) \in \Sp_8(\Q)$ defined by
\[ g :=  \pmat{-2 b^{-T} c}{b^{-T}}{4ab^{-T} c - b}{-2a b^{-T}}.\] 
We call this the \emph{symplectic element} associated to $Q$.
\end{definition}

As mentioned in the introduction, we will abuse notation by regarding $g$ as an element of $\Sp_8(\Z/q\Z)$ for squarefree odd $q$, coprime to $\det b$. Here is the promised interpretation of exponential sums as matrix coefficients.

\begin{proposition}\label{quad-form-to-matrix} Suppose that $q$ is odd, squarefree and coprime to $\det b$. Then for any $f_1, f_2 \in \ell^2((\Z/q\Z)^4)$ we have
\[ |T_{f_1, f_2}(r)| = |\langle \overline{f}_1, \rho(g^{(r)}) f_2\rangle|,\]
where $g^{(r)}$ is the dilate of $g$ \textup{(}regarded as an element of $\Sp_8(\Z/q\Z)$\textup{)} by $r$ as defined in \eqref{gr-def} and $\rho$ is the Weil representation described in Proposition \ref{weil-prop}.
\end{proposition}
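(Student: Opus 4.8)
The plan is to express the operator $\Phi_r$ from \eqref{inner-prod-form} as $\rho(g^{(r)})$ up to a unimodular scalar, by decomposing the symplectic element $g = g(Q)$ into a product of the three generator types appearing in Proposition~\ref{weil-prop} (dilations $s(E)$, the Fourier element $J$, and quadratic modulations $l(W)$), and then tracking how $\rho$ acts on each factor. Since $|\langle \overline{f}_1, \rho(g^{(r)})f_2\rangle|$ involves an absolute value, all the $\xi$-phases are irrelevant and need not be computed; this is why the statement only claims equality of absolute values. Because the dilation $r \mapsto g^{(r)}$ is an automorphism (as noted after \eqref{gr-def}) and $\rho$ is a genuine representation, it suffices to treat $r = 1$, i.e.\ to show $|T_{f_1,f_2}(1)| = |\langle \overline{f}_1, \rho(g)f_2\rangle|$, and then substitute $g \mapsto g^{(r)}$, $Q \mapsto$ the dilated form, at the end --- or alternatively just carry the $r$ through the factorization directly, which is cleaner.

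The key computation is the factorization. First I would recall from the introduction, equation near \eqref{m-delta}, the identity
\[ \pmat{a}{b/2}{b^T/2}{c} = \pmat{1}{0}{A^T}{1}\pmat{B}{0}{0}{C}\pmat{1}{A}{0}{1},\]
but for the \emph{symplectic element} $g$ (not the Gram matrix) I need a genuine $\Sp_8$ factorization. The natural approach: the map $\Phi_r f(x) = q^2\E_y f(y) e_q(rQ(x,y))$ factors as a composition. Writing $Q(x,y) = x^Tax + x^Tby + y^Tcy$, we have $e_q(rQ(x,y)) = e_q(rx^Tax)\,e_q(rx^Tby)\,e_q(ry^Tcy)$, so $\Phi_r = (\text{mult.\ by } e_q(rx^Tax)) \circ (\text{partial Fourier-type operator with kernel } e_q(rx^Tby)) \circ (\text{mult.\ by } e_q(ry^Tcy))$. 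The middle operator $f \mapsto q^2\E_y f(y)e_q(rx^Tby)$ is a composition of the standard Fourier transform $\rho(J)$ with a linear change of variable $y \mapsto$ (something involving $b$ and $r$), i.e.\ $\rho(s(E))$ for an appropriate $E$ built from $b$ (this is where the hypothesis $\det b \neq 0$, $q$ coprime to $\det b$ enters, so $b$ is invertible mod $q$); the two multiplication operators are $\rho(l(W_1))$ and $\rho(l(W_2))$ for $W_1, W_2$ symmetric multiples of $a$ and $c$ (here $q$ odd lets us divide by $2$ to match the $-\frac12 x^TWx$ in the quadratic modulation formula). Multiplying out the corresponding symplectic matrices $l(W_1) \cdot s(E) \cdot J \cdot l(W_2)$ (or whatever the correct order turns out to be) and comparing with Definition~\ref{symplectic-element} should reproduce $g^{(r)} = \smallpmat{-2b^{-T}c}{\,r^{-1}b^{-T}}{-r(4ab^{-T}c-b)}{-2ab^{-T}}$ --- one just has to get the bookkeeping of transposes, signs, factors of $2$, and the placement of $r$ exactly right.

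The main obstacle, then, is purely this matrix bookkeeping: verifying that the specific product of elementary symplectic matrices really equals the matrix in Definition~\ref{symplectic-element}, including the correct $r$-dilation structure. There is a subtlety in that the three Weil-representation formulas in Proposition~\ref{weil-prop} are only asserted for the specific generators $s(E), J, l(W)$, and one must use the homomorphism property $\rho(g_1 g_2) = \rho(g_1)\rho(g_2)$ to assemble them --- so I must make sure the chosen generators genuinely lie in $\Sp_8$ (check the defining relations $A^TC = C^TA$, etc.) and that their product is $g^{(r)}$ on the nose, not merely up to something. A clean way to organize this: verify directly, by a short computation with the orthogonality relations (exactly as in the ``more conceptual explanation'' after Lemma~\ref{triv-bd}), that the composite operator $\rho(l(W_1))\rho(s(E))\rho(J)\rho(l(W_2))$ acts on $f$ by $f(x) \mapsto (\text{phase}) \cdot q^2\E_y f(y)e_q(rQ(x,y))$; this sidesteps any need to know $\xi$ and reduces everything to one explicit Gaussian-sum-free change of variables. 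Finally, since $T_{f_1,f_2}(r) = \overline{\langle \overline{f}_1, \Phi_r f_2\rangle}$ and $\Phi_r = \xi \cdot \rho(g^{(r)})$ with $|\xi| = 1$, taking absolute values gives the claim.
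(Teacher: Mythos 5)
Your proposal matches the paper's own proof essentially exactly: the paper factors $g$ as $l(-2a)\cdot s(b^{-T})\cdot J\cdot l(-2c)$, applies the explicit formulas from Proposition~\ref{weil-prop} to each factor to recover $\Phi_1$ up to a unimodular phase, reduces the general $r$ to the case $r=1$ by replacing $Q$ with $rQ$, and discards the phase on taking absolute values. (One tiny typo: the bottom-left block of $g^{(r)}$ should be $r(4ab^{-T}c - b)$, without the extra minus sign you wrote, but since you flag the bookkeeping as the remaining check this does not affect the substance.)
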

\begin{proof}
It suffices to establish the case $r = 1$, since then the formula for general $r$ follows by applying that case with $Q$ replaced by $rQ$ (or, to be pedantic, $\overline{r}Q$ for some $r \in \Z$ projecting to $r \md{q}$).

To handle the case $r = 1$, note that $q^2\E_{y \in (\Z/q\Z)^4} f(y) e(Q(x,y))$ may be built up as a composition of four (unitary) operations, as follows:
\begin{enumerate}
\item A quadratic modulation $f(y) \mapsto f(y) e_q(y^T c y)$;
\item Fourier transform $f \mapsto q^2\E_{y \in (\Z/q\Z)^4} f(y) e_q(x^T y)$;
\item Dilation $f(x) \mapsto f(b^T x)$;
\item Quadratic modulation $f(x) \mapsto f(x)e_q(x^T a x)$. 
\end{enumerate}
In the Weil representation these four operations correspond, up to scalar multiplication by unit complex numbers, respectively, to the following elements of $\Sp_8(\Z/q\Z)$: $l(-2c),  J, s(b^{-T})$ and $l(-2a)$. Therefore by Proposition \ref{weil-prop} (since $\rho$ is a homomorphism!) we have
\[ q^2\E_{y \in (\Z/q\Z)^4} f(y) e(Q(x,y) ) = z \rho(g) f(x)\] for some unit complex number $z= z(Q)$ where \[ g = l(-2a) \cdot s(b^{-T}) \cdot J \cdot l(-2c)\] is the product of the four elements just written down. A short computation confirms that $g$ is the symplectic element of $Q$ as defined in Definition \ref{symplectic-element}. 

Finally, we have
\begin{align*} T_{f_1, f_2}(1) & = \E_{x \in (\Z/q\Z)^4} f_1(x) \E_{y \in (\Z/q\Z)^4} f_2(y) e(Q(x,y)) \\ & = z \E_{x \in (\Z/q\Z)^4} f_1(x) (\rho(g) f_2)(x) \\ & = z \overline{ \langle \overline{f}_1, \rho(g) f_2\rangle   }.\end{align*}
This completes the proof.\end{proof}

The following definition will play a key role in what follows.

\begin{definition}\label{measure-def}
Fix $Q(x,y) = x^T a x + x^T b y + y^T c y$, a quadratic form over $\Z$ with $\det b \neq 0$. Let $g$ be the symplectic element of $Q$ (see Definition \ref{symplectic-element}). Then for every odd squarefree $q$ coprime to $\det b$ we associate a probability measure $\mu_q$ on $\Sp_8(\Z/q\Z)$, which puts weight $\frac{1}{\phi(q)}$ on each of the points $g^{(r)}$, $r \in (\Z/q\Z)^*$. \end{definition}

We are now in a position to rephrase Proposition \ref{zq-main-prop} in terms of matrix coefficients.

\begin{proposition}\label{matrix-form}
Suppose that $Q$ is generic. Then there is $p_0(Q)$ such that the following is true. Suppose that $q$ is squarefree with all prime factors greater than $p_0(Q)$. Then
\[ \int  |\langle f_1, \rho(x) f_2\rangle| d\mu_q(x) \ll q^{-\delta} \Vert f_1 \Vert_2 \Vert f_2 \Vert_2\]  for all $f_1, f_2 \in \ell^2((\Z/q\Z)^4)$.
\end{proposition}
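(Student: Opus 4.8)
The plan is to deduce Proposition~\ref{matrix-form} from the general machinery advertised in Section~\ref{sec8} (bounding averages of matrix coefficients) by verifying the two hypotheses---convergence of symmetrised convolution powers, and quasirandomness---for the specific data $G = \Sp_8(\Z/q\Z)$, $\psi = \rho$ the Weil representation, and $\mu = \mu_q$ the uniform measure on the dilates $g^{(r)}$. The proposition is in fact just a restatement of Proposition~\ref{zq-main-prop} via the matrix-coefficient interpretation of Proposition~\ref{quad-form-to-matrix}; note that one must be slightly careful that $q$ is odd, but since $q$ has all prime factors greater than $p_0(Q)$ we may assume $p_0(Q) \geq 2$, and we also require $q$ coprime to $\det b$, which again follows by taking $p_0(Q)$ larger than every prime dividing $\det b$. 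So the real content is to run the two-step argument.

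First I would establish the convergence property. The symmetrised measure is supported on products of elements $g^{-(r)} g^{(s)}$, and the aim is to show that a bounded-order symmetrised convolution power of $\mu_q$ is close (in the appropriate norm on measures, with the normalisation warned about in the notation section) to the uniform measure on the subgroup $\Gamma_q \leq \Sp_8(\Z/q\Z)$ generated by these elements. As indicated in the outline, this is to be done via the Lang--Weil estimate applied to the algebraic family parametrising the $g^{(r)}$ (following Tao's blog-post argument), which gives rapid equidistribution \emph{without} needing to identify $\Gamma_q$ in advance. Crucially $\Gamma_q \cong \prod_{p \mid q} \Gamma_p$, so the convergence rate is uniform in $q$ once it is known for each prime, and the $q^{-\delta}$ gain then comes from the product structure together with the convergence rate.

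Second I would establish quasirandomness of $\rho|_{\Gamma_q}$: that this restriction has no nontrivial low-dimensional irreducible constituents. By the tensor-product structure of $\rho$ over $q = \prod p$ and of $\Gamma_q = \prod \Gamma_p$, it suffices to treat $q = p$ prime. Here one invokes the explicit identification $\Gamma_p \cong \SL_2(\F_p[\Delta])$ (conjugate inside $\GL_8$), established in Section~\ref{sec10} using $\SL_2$-calculations and the Goursat/Ribet lemmas, together with the representation theory of $\SL_2$ over finite fields (Appendix~\ref{sl2-app}): the only small-degree irreducible of such a group is trivial, so a small constituent of $\rho|_{\Gamma_p}$ would force a nonzero $\Gamma_p$-fixed vector in $\ell^2((\Z/p\Z)^4)$. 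Ruling this out is the final step (Section~\ref{sec11}), done by direct computation rather than representation theory. With both hypotheses in hand, the Section~\ref{sec8} tool yields $\int |\langle f_1, \rho(x) f_2\rangle|\, d\mu_q(x) \ll q^{-\delta}\Vert f_1\Vert_2 \Vert f_2\Vert_2$.

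The main obstacle I anticipate is the quasirandomness step, specifically the explicit determination of $\Gamma_p$ and the verification that $\rho|_{\Gamma_p}$ has no invariant vector: the group $\Gamma_p$ turns out to be far smaller than all of $\Sp_8$ (size $\sim p^{12}$ rather than $\sim p^{36}$), so one genuinely needs to pin it down as a twisted copy of $\SL_2$ over the ring $\F_p[\Delta]$---which is where the hypothesis that $\Delta$ has four distinct eigenvalues avoiding $0$ and $-1$ enters---and then control its action in the Weil representation concretely. The convergence step, while technically substantial, is more routine given the Lang--Weil input and the algebraic parametrisation of the $g^{(r)}$.
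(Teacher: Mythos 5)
Your proposal is correct and matches the paper's own route essentially exactly: Proposition \ref{matrix-form} is deduced by combining the general matrix-coefficient bound of Proposition \ref{matrix-avg} with the two verifications that $\mu_q^{(m)}\ll \mu_{\Gamma_q}$ for some bounded power $m$ (Proposition \ref{subgroup-spread}, via Lang--Weil) and that $\rho|_{\Gamma_q}$ has all irreducible constituents of dimension $\geq q^{1-o(1)}$ (Proposition \ref{quasirandom-gamma}, via the identification $\Gamma_p = \tau^{-1}\SL_2(\F_p[\Delta])\tau$ and the no-invariant-vector computation). One small phrasing nit: the $q^{-\delta}$ saving comes specifically from the quasirandomness dimension bound $D = q^{1-o(1)}$ fed into $K^{1/m}D^{-1/2m}$; the product decomposition $\Gamma_q\cong\prod_{p\mid q}\Gamma_p$ is what keeps $m$ and $K$ bounded uniformly in $q$, rather than itself producing the gain.
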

\emph{Remarks.} For a discussion of the integral notation, see the start of the next section. By taking $p_0(Q)$ larger than any prime factor of $\det b$, we can ensure that the conditions of Proposition \ref{quad-form-to-matrix} are satisfied. For notational simplicity we switched $\overline{f}_1$ to $f_1$, which makes no difference since these functions have the same $\ell^2$-norm.

\section{Averages of matrix coefficients}\label{sec8}

In this section we give a general bound for averages of matrix coefficients. Whilst we do not know of a reference for quite this result, the first part of the argument is related to earlier work, particularly Bourgain \cite{bourgain-modular-hyperbola} and Skhredov \cite[Section 4]{shkredov}. The second idea, of using quasirandomness (no small-dimensional representations apart from the trivial one), is also by no means new. It is exploited in related ways in many works starting with Sarnak--Xue \cite{sarnak-xue} and continuing with, for instance, Bourgain--Gamburd \cite{bourgain-gamburd} and Gowers \cite{gowers-quasirandom}. \vspace*{6pt}

\emph{Probability measures.} We begin by recalling some basic notions about probability measures on (finite) groups. Let $G$ be a finite group. A probability measure $\mu$ on $G$ is simply a function $\mu : G \rightarrow [0,1]$ with $\sum_{x \in G} \mu(x) = 1$. The opposite measure $\mu^{\circ}$ is defined by $\mu^{\circ}(x) = \mu(x^{-1})$. If $\mu^{\circ} = \mu$ then we say that $\mu$ is \emph{symmetric}. If $\mu_1, \mu_2$ are two probability measures then their convolution $\mu_1 \ast \mu_2$ is defined by $\mu_1 \ast \mu_2(x) = \sum_{g_1g_2 = x} \mu_1(g_1) \mu_2(g_2)$. This is also a probability measure. If $\mu$ is a probability measure and $2m$ a positive even integer, we write $\mu^{(2m)}$ for the $2m$-fold \emph{symmetrised} convolution power $\mu^{\circ} \ast \mu \ast \mu^{\circ} \ast \cdots \ast \mu$. This is slightly non-standard, but very convenient as these are the only types of convolution power we will consider in this paper. At one place in Section \ref{unif-conv-power} we will use a similar notation with an odd power, thus $\mu^{(2m-1)}(x) = \mu^{\circ} \ast \mu \ast \mu^{\circ} \ast \cdots \ast \mu^{\circ}$. This, of course, is not necessarily a symmetric measure. We have $\mu^{(2m)} = \mu^{(2m - 1)} \ast \mu$. If $H \leq G$ is a subgroup then we write $\mu_H$ for the uniform probability measure on $H$, that is to say $\mu_H(x) = |H|^{-1} 1_{x \in H}$. 

If $\mu$ is a probability measure on a finite group $G$ then we write $\Vert \mu \Vert = \big(\sum_x \mu(x)^2\big)^{1/2}$. Note that this is normalised differently to the $\ell^2$-norm of functions which has appeared in previous sections: to reduce the potential for confusion, we omit any subscript from the norm. We extend this notion to differences of measures in the obvious way, thus $\Vert \mu - \nu \Vert = \big(\sum_x (\mu(x) - \nu(x))^2 \big)^{1/2}$.

If $F : G \rightarrow \C$, we will adopt the fairly standard convention in this context of writing $\int F(x) d\mu(x)$ instead of $\sum_x F(x)\mu(x)$.\vspace*{6pt}
 
We will need the following consequence of Schur's lemma which is standard but cannot be reliably found in every textbook.

\begin{lemma}\label{schur}
Let $\psi : G \rightarrow \U(V)$ be an irreducible representation of a finite group $G$. Suppose that $v, w \in V$. Then we have
\[ \int |\langle v, \psi(x) w \rangle|^2 d\mu_G(x) = \frac{1}{\dim \psi} \Vert v \Vert^2 \Vert w \Vert^2.\]
\end{lemma}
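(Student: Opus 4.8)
The plan is to deduce this from Schur's lemma by constructing a suitable averaging (intertwining) operator. Fix $w \in V$ and define a linear map $M \colon V \to V$ by
\[ Mv := \int \langle v, \psi(x) w\rangle \, \psi(x) w \; d\mu_G(x),\]
a finite average of rank-one operators, hence well defined; note that it is genuinely $\C$-linear in $v$ because the scalar $\langle v, \psi(x) w\rangle$ is linear in its first argument. The point of this definition is that
\[ \langle Mv, v\rangle = \int \langle v, \psi(x) w\rangle \, \overline{\langle v, \psi(x) w\rangle} \; d\mu_G(x) = \int |\langle v, \psi(x) w\rangle|^2 \; d\mu_G(x),\]
so it suffices to identify $M$ explicitly.

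Next I would check that $M$ intertwines $\psi$ with itself, i.e.\ $M\psi(g) = \psi(g) M$ for every $g \in G$. This is a one-line computation: using unitarity to rewrite $\langle \psi(g) v, \psi(x) w\rangle = \langle v, \psi(g^{-1} x) w\rangle$ and then substituting $x \mapsto gx$ (left-invariance of the uniform measure $\mu_G$), one gets $M\psi(g) v = \psi(g) \int \langle v, \psi(y) w\rangle \psi(y) w \, d\mu_G(y) = \psi(g) Mv$. Since $\psi$ is irreducible, Schur's lemma forces $M = \lambda \, \id_V$ for some scalar $\lambda \in \C$.

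To pin down $\lambda$ I would take traces. On one hand $\operatorname{tr} M = \lambda \dim \psi$. On the other hand, for any fixed vector $u \in V$ the rank-one operator $v \mapsto \langle v, u\rangle u$ has trace $\Vert u\Vert^2$ (evaluate in an orthonormal basis and use $\langle u, e_i\rangle = \overline{\langle e_i, u\rangle}$); taking $u = \psi(x) w$ and using that $\psi(x)$ is unitary gives $\Vert \psi(x) w\Vert = \Vert w\Vert$, so $\operatorname{tr} M = \int \Vert w\Vert^2 \, d\mu_G(x) = \Vert w\Vert^2$. Hence $\lambda = \Vert w\Vert^2 / \dim\psi$, and therefore
\[ \int |\langle v, \psi(x) w\rangle|^2 \, d\mu_G(x) = \langle Mv, v\rangle = \lambda \Vert v\Vert^2 = \frac{1}{\dim\psi}\Vert v\Vert^2 \Vert w\Vert^2,\]
as claimed.

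I do not anticipate a genuine obstacle here; this is the classical Schur orthogonality relation for matrix coefficients. The only point requiring a little care is the placement of the complex conjugate in the inner product (the paper's convention being linear in the first slot): with the other convention $M$ would fail to be $\C$-linear and one would be forced to work with the conjugate representation instead. Everything else is routine finite-dimensional linear algebra.
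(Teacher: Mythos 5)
Your proof is correct, and it is the standard argument: average the rank-one projectors $v \mapsto \langle v, \psi(x) w\rangle \psi(x) w$ over $G$, observe by a change of variables that the resulting operator $M$ intertwines $\psi$ with itself, invoke Schur's lemma to conclude $M = \lambda\,\id_V$, and determine $\lambda$ from the trace. The only caveat worth reiterating, which you already flag, is that the paper's inner product convention (linear in the first slot) is exactly what makes $M$ complex-linear rather than conjugate-linear, so the application of Schur's lemma is legitimate.

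The paper itself does not reprove this lemma; it simply cites Kowalski \cite[Proposition 4.3.5]{kowalski}, where the argument given is the same one you wrote. So your proof is not a different route so much as a self-contained expansion of the citation. Nothing is missing.
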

\begin{proof} See \cite[Proposition 4.3.5]{kowalski}. Note that here $\dim \psi$ is defined to be $\dim V$.\end{proof}
 
\begin{proposition}\label{matrix-avg}
Let $G$ be a finite group, and let $\rho : G \rightarrow \U(V)$ be a finite-dimensional unitary representation of $G$. Let $\mu$ be a probability measure on $G$. Let $H \leq G$ be the group generated by $\Supp(\mu^{(2)}) = \Supp(\mu^{\circ} \ast \mu)$. Suppose that
\begin{itemize}
\item \textup{(Almost uniform distribution of convolution powers)} For some real number $K \geq 1$ and for some power of two $m$ we have \begin{equation}\label{almost-unif} \mu^{(m)}(x) \leq K \mu_H(x)\end{equation} pointwise;
\item \textup{(Quasirandomness)} If $\rho|_H = \bigoplus_i \psi_i$ as a sum of irreducible representations \textup{(}of $H$\textup{)} then $\dim \psi_i \geq D$ for all $i$. 
\end{itemize}
Then we have the bound
\begin{equation}\label{int-star} \int  |\langle v, \rho(x) w\rangle| d\mu(x) \leq K^{1/m} D^{-1/2m}\end{equation} for all $v, w \in V$ with $\Vert v \Vert = \Vert w \Vert = 1$.
\end{proposition}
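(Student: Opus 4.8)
The plan is to interpolate between two bounds for the quantity $\int |\langle v, \rho(x)w\rangle|\, d\mu(x)$: a trivial $\ell^\infty$-type bound and a much better bound obtained after passing to the $m$-th symmetrised convolution power and exploiting quasirandomness. The amplification device is the standard one: since $|\langle v,\rho(x)w\rangle| \leq 1$ for all $x$ (unitarity, unit vectors), the function $x \mapsto |\langle v,\rho(x)w\rangle|$ has $\mu$-average bounded by its $\mu$-average raised to any power divided by itself, but more usefully we should get at $\int |\langle v,\rho(x)w\rangle|^2 \, d\mu^{(m)}(x)$, which by the almost-uniform-distribution hypothesis is at most $K \int |\langle v,\rho(x)w\rangle|^2 \, d\mu_H(x)$.

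\textbf{Key steps.} First I would observe that for vectors $v,w$ and any $x,y \in G$ one has, by Cauchy--Schwarz and unitarity of $\rho$, the submultiplicativity-type inequality relating matrix coefficients along a convolution: writing $c(x) := \langle v, \rho(x) w\rangle$, and using that $\rho(x^{-1}y) = \rho(x)^* \rho(y)$, one gets $|c(x^{-1}y)| = |\langle \rho(x)v, \rho(y)w\rangle|$, so that
\[ \int |c(x^{-1}y)|\, d\mu(x)\,d\mu(y) \geq \Big(\int \langle \rho(x)v,\cdot\rangle\Big)\cdots \]
— more precisely I would iterate to show
\[ \int |c(x)|^2 \, d\mu^{(m)}(x) \geq \Big( \int |c(x)| \, d\mu(x)\Big)^{m}, \]
by repeatedly inserting the identity $\rho(x)^*\rho(x) = I$ and applying Cauchy--Schwarz at each folding step. (Concretely: $\int |c|\,d\mu = \int |\langle \rho(x)v, w\rangle|\,d\mu(x) \leq (\int |\langle \rho(x)v,w'\rangle|^2 d\mu)^{1/2}$ for suitable unit $w'$, and one bootstraps; the symmetrised convolution power is exactly what makes these foldings match up, which is why $m$ is chosen to be a power of two.) Second, I would bound the right-hand ingredient $\int |c(x)|^2 \, d\mu^{(m)}(x)$: by \eqref{almost-unif} this is $\leq K \int |c(x)|^2 \, d\mu_H(x)$. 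Third, I would decompose $\rho|_H = \bigoplus_i \psi_i$ and correspondingly $v = \sum_i v_i$, $w = \sum_i w_i$; since the $\psi_i$-isotypic subspaces are $H$-invariant and mutually orthogonal, $\langle v, \rho(x) w\rangle = \sum_i \langle v_i, \psi_i(x) w_i\rangle$ and the cross terms integrate to zero over $\mu_H$ by orthogonality of matrix coefficients of inequivalent irreducibles (and Schur orthogonality within each block), so $\int |c(x)|^2\,d\mu_H(x) = \sum_i \int |\langle v_i,\psi_i(x)w_i\rangle|^2 \, d\mu_H(x)$. Applying Lemma~\ref{schur} to each term gives $\sum_i (\dim\psi_i)^{-1}\|v_i\|^2\|w_i\|^2 \leq D^{-1} \sum_i \|v_i\|^2\|w_i\|^2 \leq D^{-1}(\sum_i \|v_i\|^2)(\sum_i\|w_i\|^2) = D^{-1}\|v\|^2\|w\|^2 = D^{-1}$. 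Combining: $(\int |c|\,d\mu)^m \leq K D^{-1}$, which rearranges to \eqref{int-star}.

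\textbf{Main obstacle.} The step I expect to require the most care is the folding inequality $\int |c(x)|^2\, d\mu^{(m)}(x) \geq (\int |c(x)|\,d\mu(x))^m$. One has to set up the induction on $m$ (doubling at each stage, using $\mu^{(2m)} = \mu^{(m)} \circ \text{(reflect)} \ast \mu^{(m)}$ appropriately) and verify that each application of Cauchy--Schwarz is in the right direction, keeping track of which vector plays the role of $v$ and which of $w$ so that the unitary operators $\rho(x)$ absorb correctly; the symmetrisation (alternating $\mu^\circ$ and $\mu$) is precisely what ensures $\rho(x)^*\rho(x)$ collapses rather than leaving a stray operator. It is essentially an exercise, but it is the heart of the amplification and the place where the hypotheses on $m$ being a power of two and on $\mu^{(2)}$ generating $H$ get used (the latter guaranteeing $\Supp(\mu^{(m)})$ fills out $H$ so that comparison with $\mu_H$ is meaningful). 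Everything else is bookkeeping with Schur's lemma and the triangle/Cauchy--Schwarz inequalities.
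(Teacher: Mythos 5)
Your overall plan is the right one and matches the paper's in spirit: amplify by iterated Cauchy--Schwarz to transfer the average from $\mu$ to $\mu^{(m)}$, pass to $\mu_H$ via the almost-uniform-distribution hypothesis, and finish with Schur's lemma. However, the central folding inequality you state is false as written. You claim
\[
\int |c(x)|^2\, d\mu^{(m)}(x) \;\geq\; \Big(\int |c(x)|\, d\mu(x)\Big)^m
\]
with $c(x) = \langle v, \rho(x)w\rangle$ kept as the \emph{mixed} matrix coefficient throughout. Take $\mu = \delta_g$ for a single $g \in G$; then $\mu^\circ \ast \mu = \delta_e$, hence $\mu^{(m)} = \delta_e$ for every power of two $m$. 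Choosing unit vectors with $v \perp w$ and $\rho(g)w = v$ (which is possible in many $G$, $\rho$), the left side is $|\langle v, w\rangle|^2 = 0$ while the right side is $|\langle v, \rho(g)w\rangle|^m = 1$. The proposition's conclusion is vacuous in this example (here $H = \{e\}$ and $D = 1$), but your intermediate claim is genuinely wrong. The underlying reason: after the first application of Cauchy--Schwarz, the vector $v$ must be replaced by $w$, and subsequently one iterates with the \emph{diagonal} coefficient $\langle w, \rho(x)w\rangle$ at the first power. The paper's chain is
\[
\int |\langle w, \rho(x)w\rangle|\, d\mu^{(m)}(x) \;\geq\; \eta^m,
\]
not a statement about $|c(x)|^2$.

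A second, related problem is the exponent bookkeeping. Your final line $(\int |c|\,d\mu)^m \leq K D^{-1}$ would rearrange to $\eta \leq K^{1/m} D^{-1/m}$, which is not the claimed $K^{1/m}D^{-1/2m}$; you cannot assert it "rearranges to \eqref{int-star}". The paper's arithmetic is $\eta^m \leq K \int |\langle w,\rho(x)w\rangle|\, d\mu_H \leq K D^{-1/2}$, where $D^{-1/2}$ arises from applying Cauchy--Schwarz once, to convert the first-power average into a second-power average, before invoking Lemma~\ref{schur}. Finally, a smaller point: your assertion that the cross-terms in $\int |c|^2\,d\mu_H$ all vanish ``by orthogonality of matrix coefficients of inequivalent irreducibles'' ignores the case where distinct $V_i, V_j$ carry \emph{equivalent} irreducibles of $H$ --- those cross-terms need not vanish (though the inequality $\int|c|^2\,d\mu_H \le D^{-1}$ can still be established by a more careful argument). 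The paper avoids this issue entirely by using the triangle inequality $|\langle w,\rho(x)w\rangle| \le \sum_i |\langle w_i,\psi_i(x)w_i\rangle|$ and bounding each summand separately.
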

\emph{Remark.} Note that the trivial bound is $1$ (by the unitary nature of $\rho$ and Cauchy-Schwarz). If $K \approx 1$ and $D$ is somewhat large, \eqref{int-star} is therefore an appreciable improvement of the trivial bound.
\begin{proof}
Set 
\[ \eta := \int  |\langle v, \rho(x) w\rangle| d\mu(x).\]
For each $x \in G$, let $\xi(x) = e(\arg \langle v, \rho(x)w\rangle)$, so $\xi(x)$ is a unit complex number and
\[ \langle v, \int \xi(x) \rho(x)w d\mu(x) \rangle =  \int \overline{\xi(x)} \langle v, \rho(x) w \rangle d \mu(x) = \eta.\] By Cauchy-Schwarz, 
\[ \big\Vert \int \xi(x) \rho(x) wd\mu(x)  \big\Vert \geq \eta.\] Squaring and expanding out gives
\[ \int \xi(x) \overline{\xi(x')} \langle  \rho(x) w, \rho(x') w\rangle d\mu(x) d\mu(x')  \geq \eta^2,\] thus
\[ \int |\langle \rho(x) w, \rho(x') w\rangle | d\mu(x) d\mu(x') \geq \eta^2.\] Since $\rho$ is a unitary representation, this implies
\[ \int|\langle w, \rho(x^{-1} x') w\rangle| d\mu(x) d\mu(x') \geq \eta^2,\]
or in other words
\[\int |\langle w, \rho(x) w\rangle| d\mu^{(2)}(x) \geq \eta^2.\]
We may now apply the same argument again repeatedly, noting that $\mu^{(2)}, \mu^{(4)},\dots$ are symmetric, to obtain
\[ \int |\langle w, \rho(x) w \rangle |d\mu^{(m)}(x) \geq \eta^{m}\] for any power of two $m$. 
By the almost uniform distribution assumption \eqref{almost-unif}, this implies (with $m$ as in \eqref{almost-unif}) that
\begin{equation}\label{rep-lower} \int |\langle w, \rho(x) w \rangle| d\mu_H(x) \geq K^{-1} \eta^{m}.\end{equation}
Now decompose $V = \bigoplus_{i=1}^n V_i$ as a sum of orthogonal $\rho(H)$-invariant subspaces, irreducible for $\rho|_H$. Let $w_i$ be the projection of $w$ to $V_i$, so $w = \sum_{i=1}^n w_i$ and 
\begin{equation}\label{sum-squares} \sum_{i=1}^n \Vert w_i \Vert^2 = 1.\end{equation} 
By Lemma \ref{schur} we have for $i = 1,\dots, n$
\[ \int |\langle w_i, \rho(x) w_i\rangle|^2d\mu_H(x)  = \frac{1}{\dim V_i} \Vert w_i \Vert^4,\] so by Cauchy-Schwarz and the quasirandomness assumption 
\begin{equation}\label{schur-quasi} \int |\langle w_i, \rho(x) w_i\rangle | d\mu_H(x) \leq D^{-1/2} \Vert w_i \Vert^2.\end{equation}
Since (by orthogonality) 
\[ \langle w, \rho(x) w\rangle = \sum_{i=1}^n \langle w_i, \rho(x) w_i\rangle\] for all $x \in H$, it follows from \eqref{sum-squares} and \eqref{schur-quasi} that 
\[ \int |\langle w, \rho(x) w \rangle |d\mu_H(x) \leq D^{-1/2}.\]
Comparing this with \eqref{rep-lower} gives the claimed bound.
\end{proof}

We now outline the rest of the paper. Recall that we have reduced the proof of our main theorem to the task of proving Proposition \ref{matrix-form}. We now have a tool, Proposition \ref{matrix-avg}, to use on this problem.  However, we must verify the two requirements, the uniform distribution property \eqref{almost-unif} and the quasirandomness property, in our setting. 

The formal statements are Propositions \ref{subgroup-spread} and \ref{quasirandom-gamma} below. First, we give a definition which will play an important role in the rest of the paper.

\begin{definition}\label{gamma-def} Suppose that $q$ is odd, squarefree and coprime to $\det b$. Let $\Gamma_q \leq \Sp_8(\Z/q\Z)$ be the group generated by the elements $g^{-(r)} g^{(s)}$, $r, s \in (\Z/q\Z)^*$, or equivalently by the support of $\Supp(\mu_q^{(2)})$. \end{definition}

\begin{proposition}\label{subgroup-spread}
Let $Q$ be a quadratic form. Then there is some $p_0(Q)$ such that the following is true. Suppose that $q$ is squarefree and has all prime factors greater than $p_0(Q)$. Let $\mu_q$ be the measure described in Definition \ref{measure-def}. Then there is power of two $m = O(1)$ such that $\mu_q^{(m)} \ll \mu_{\Gamma_q}$ pointwise, with the implied constant being absolute.
\end{proposition}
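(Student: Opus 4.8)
The plan is to establish the almost‑uniform distribution of $\mu_q^{(m)}$ by first treating the prime case $q = p$ using the algebraic nature of the measure, and then multiplying up over the prime factors of $q$ via the tensor/CRT structure. Since $\Sp_8(\Z/q\Z) \cong \prod_{p \mid q} \Sp_8(\F_p)$ for squarefree $q$, and since $\mu_q$ is a product measure (each $g^{(r)}$ decomposes componentwise under CRT, and $r \mapsto (r \bmod p)_{p \mid q}$ is a bijection $(\Z/q\Z)^* \to \prod (\Z/p\Z)^*$), the group $\Gamma_q$ factors as $\prod_p \Gamma_p$ and $\mu_q^{(m)}$ factors as a product of the $\mu_p^{(m)}$. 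Hence a pointwise bound $\mu_p^{(m)} \leq K_p \mu_{\Gamma_p}$ with $K_p = 1 + o(1)$ as $p \to \infty$ yields $\mu_q^{(m)} \leq \big(\prod_p K_p\big) \mu_{\Gamma_q}$, and the product $\prod_{p > p_0} K_p$ converges (indeed is $O(1)$) once $p_0$ is large enough, giving the absolute implied constant claimed. So it suffices to prove: for some power of two $m = O(1)$ and some $p_0(Q)$, if $p > p_0(Q)$ is prime then $\mu_p^{(m)}(x) \leq (1 + Cp^{-1/2}) \mu_{\Gamma_p}(x)$ pointwise (any power saving in $p$ of the error would do).

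For the prime case the idea, following the Lang–Weil approach advertised in the introduction (after Tao's blog post and the Pillay–Starchenko/Hrushovski circle of ideas), is as follows. The support of $\mu_p^{(2m-1)}$ is the image of the morphism $(\Z/p\Z)^{* \, \times 2m} \to \Sp_8(\F_p)$, $(r_1, s_1, \dots, r_m, s_m) \mapsto g^{-(r_1)} g^{(s_1)} \cdots$ (with the symmetrised pattern), which is the restriction to $\F_p$‑points of a morphism of affine $\mathbf{Q}$‑varieties parametrised by rational functions in the $r_i, s_i$ with denominators dividing powers of the variables — this is exactly where we use that $\mu$ has an "algebraic definition". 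Write $V_m$ for the Zariski closure over $\overline{\mathbf{Q}}$ of this image; one checks, using that the $g^{(r)}$ all lie in the algebraic group which will turn out to be (a conjugate of) $\SL_2(\F_p[\Delta])$ — or just a priori in $\Sp_8$ — together with a dimension/stabilisation argument, that for $m$ large enough (bounded absolutely, since $\dim \Sp_8 = 36$) the varieties $V_m$ stabilise to a subvariety which is in fact a coset of, and eventually equals, the algebraic group $\mathbf{\Gamma}$ generated by the $g^{-(r)}g^{(s)}$. Crucially $m$ can be taken a power of two and $O(1)$ because each doubling at worst doubles the parameter count, and a subgroup chain in an algebraic group of dimension $\leq 36$ stabilises after $O(1)$ steps. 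Then the Lang–Weil estimate gives that the number of $\F_p$‑points of $\Gamma_p := \mathbf{\Gamma}(\F_p)$ is $|\Gamma_p| = (1 + O(p^{-1/2})) p^{\dim \mathbf{\Gamma}}$, and more importantly an equidistribution statement: for each fibre of the parametrising morphism over a point of $\Gamma_p$, the number of $\F_p$‑preimages is $(1 + O(p^{-1/2})) p^{\dim(\text{fibre})}$, uniformly, by applying Lang–Weil (or Lang–Weil with a bound on the number of irreducible components, all defined over a field of bounded degree since $Q$ is fixed) to the fibres, which are themselves varieties of complexity $O_Q(1)$. Dividing, the measure $\mu_p^{(2m-1)}(x) = (\#\text{preimages of } x)/(\#\text{total domain points})$ equals $(1 + O(p^{-1/2})) / |\Gamma_p|$, uniformly over $x \in \Gamma_p$, which is exactly $\mu_p^{(2m-1)} \leq (1 + O(p^{-1/2}))\mu_{\Gamma_p}$; the even symmetrised power $\mu_p^{(2m)} = \mu_p^{(2m-1)} \ast \mu_p$ then satisfies the same bound since convolving a pointwise‑dominated measure with any probability measure supported in $\Gamma_p$ preserves domination by $\mu_{\Gamma_p}$.

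The main obstacle I expect is twofold and both parts are in the prime case. First, one must justify the \emph{uniformity in $p$} of the Lang–Weil error terms: this requires that the parametrising morphism and its fibres, and the variety $\mathbf{\Gamma}$, all have complexity (number and degrees of defining equations, dimensions, degrees of the fields of definition of the components) bounded independently of $p$ — which is true because everything is the reduction mod $p$ of a single $\mathbf{Q}$‑variety attached to the fixed form $Q$, but this needs to be said carefully, and one needs the components of the fibres not to fragment badly mod $p$ for $p$ large, i.e. one takes $p_0(Q)$ larger than the primes of bad reduction. Second, and more delicate, is proving that the chain $V_1 \subseteq V_3 \subseteq V_5 \subseteq \cdots$ (or the relevant chain of symmetrised products) actually \emph{stabilises to the algebraic subgroup} $\mathbf{\Gamma}$ after $O(1)$ steps and that its $\F_p$‑points fill out $\Gamma_p$ up to the $(1+o(1))$ factor — this is the standard but nontrivial statement that finitely many products of a symmetric generating set in an algebraic group recover the group, made quantitative and uniform; one way is to note that over $\overline{\mathbf{Q}}$ the increasing union of the $V_k$ is constructible and contained in $\mathbf{\Gamma}$, its dimension is non‑decreasing and bounded by $36$, so it stabilises in dimension after $\leq 36$ steps, after which a further $O(1)$ steps force it to be a subgroup (a constructible subset closed under the group law and inverses whose closure is the whole of $\mathbf{\Gamma}$ must contain a dense open, hence equal $\mathbf{\Gamma}$); then Lang–Weil transfers this to $\F_p$. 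I would present the prime case as the technical core, isolate the uniform Lang–Weil input as a lemma, and treat the product‑over‑$p$ reduction as a short formality using the CRT factorisation of $\Sp_8(\Z/q\Z)$, $\mu_q$, and $\Gamma_q$ already noted in Definition \ref{gamma-def} and the surrounding discussion.
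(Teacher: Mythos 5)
There are two genuine gaps, one in the passage from the prime case to general squarefree $q$, and one in the proposed Lang--Weil argument for the prime case itself.

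\textbf{The deduction from the prime case is broken as stated.} You reduce to showing $\mu_p^{(m)}(x) \leq (1 + Cp^{-1/2})\mu_{\Gamma_p}(x)$ pointwise, remark ``any power saving in $p$ of the error would do'', and then claim the product $\prod_{p > p_0}(1 + Cp^{-1/2})$ is $O(1)$. It is not: $\sum_p p^{-1/2}$ diverges, and even $\sum_p p^{-1}$ diverges, so the implied constant would grow with the number of prime factors of $q$. You need a saving of $O(p^{-1-\eps})$, and Lang--Weil alone gives only $O(p^{-1/2})$. The paper's proof handles this with an explicit $\ell^\infty$ bootstrap (Step 4 of Section \ref{unif-conv-power}): once one has $\Vert \mu_p^{(2^t)} - \mu_{\Gamma_p} \Vert_2 \ll p^{-c}\Vert \mu_{\Gamma_p}\Vert_2$, convolving with itself and applying Cauchy--Schwarz converts this to $\Vert \mu_p^{(2^{t+1})} - \mu_{\Gamma_p}\Vert_\infty \ll p^{-2c}|\Gamma_p|^{-1}$, and then iterating the observation that $\Vert \nu - \mu_\Gamma \Vert_\infty \leq \eps/N$ implies $\Vert \nu \ast \nu - \mu_\Gamma \Vert_\infty \leq \eps^2/N$ drives the error down to $p^{-38}$ in $O(1)$ further doublings. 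This step is not cosmetic; without it your product over $p \mid q$ is unbounded.

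\textbf{The fibre-by-fibre Lang--Weil claim does not hold.} You assert that for each $x \in \Gamma_p$ the number of $\F_p$-preimages under the parametrising morphism $\phi : T^{2m} \to \mathbf{\Gamma}$ is $(1 + O(p^{-1/2}))p^{\dim(\text{fibre})}$ \emph{uniformly}, and you locate the difficulty in the uniformity of the constants in Lang--Weil. But the real obstruction is that the fibre dimension $d_x$ and the number $c_x$ of top-dimensional geometrically irreducible components rational over $\F_p$ can genuinely vary with $x$: fibre dimension is upper semicontinuous, so over a proper closed sublocus of $\mathbf{\Gamma}$ the fibres can have strictly larger dimension, producing a ``spike'' $\mu_p^{(m)}(x) \approx p^{d_x}/p^{2m}$ that can be a positive power of $p$ larger than $|\Gamma_p|^{-1}$. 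Bounding the complexity of the variety and discarding primes of bad reduction does not rule this out. This is precisely the subtlety that motivates the algebraic regularity lemma, and it is exactly what the paper's argument sidesteps: rather than counting fibres, the paper (Lemma \ref{lem9.2}, Proposition \ref{stable-ell-2}) applies Lang--Weil only to \emph{total} point counts $|V_j(\F_p)|$, i.e.\ to $\Vert \mu_p^{(2^j)}\Vert_2^2$, where the discretised values that Lang--Weil returns force the non-increasing sequence of norms to stabilise at some $t = O(1)$. It then invokes the Fournier-type $99\%$ stability theorem (Corollary \ref{stability-young-cor}) to produce a subgroup $H$ with $\Vert \mu_p^{(2^t)} - \mu_H\Vert_2 \ll p^{-c}\Vert \mu_H\Vert_2$, and a separate group-theoretic argument (Step 3) to show $H = \Gamma_p$. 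The $\ell^2$ framing is robust to exactly the fibre-spike phenomenon that undermines the pointwise Lang--Weil approach; the pointwise control is recovered only afterwards via the convolution bootstrap. You would need to replace your fibre estimate with something like Hrushovski's uniform version of Lang--Weil or the full algebraic regularity lemma, and even then you would still need the $\ell^\infty$ iteration to get a summable-over-primes error.

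One smaller point: identifying the stable image with the algebraic group and then with $\Gamma_p$ also needs care. The constructible image may equal $\mathbf{\Gamma}$ over $\overline{\F}_p$ while $\phi(T^{2m}(\F_p))$ misses a positive density of $\mathbf{\Gamma}(\F_p)$; this does not hurt the upper bound $\mu_p^{(m)} \ll \mu_{\Gamma_p}$, but it does mean the asserted two-sided $(1 \pm O(p^{-1/2}))$ uniformity cannot hold, which is another reason the argument has to be organised differently (the paper's Step 3 coset argument around \eqref{large-measure-coset} exists precisely to pin down $\Gamma_p$ without such pointwise information).
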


We will prove this in Section \ref{unif-conv-power}. It does not require any genericity assumption on the form $Q$. 

\begin{proposition}\label{quasirandom-gamma}
Let $Q$ be a generic quadratic form. Then there is some $p_0(Q)$ such that the following is true. Let $\rho$ be the Weil representation on $\Sp_8(\Z/q\Z)$ \textup{(}as given in Proposition \ref{weil-prop}\textup{)}. Suppose that $q$ is squarefree and has all prime factors greater than $p_0(Q)$. Then $\rho|_{\Gamma_q}$ splits into irreducible subrepresentations of dimensions $\geq q^{1 - o(1)}$. 
\end{proposition}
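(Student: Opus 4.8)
The plan is to reduce Proposition \ref{quasirandom-gamma} to a question about $\Gamma_p$ for a single prime $p$, and then to combine an explicit identification of $\Gamma_p$ with the representation theory of $\SL_2$ over finite fields. First I would exploit the fact, asserted in the outline, that $\Gamma_q \cong \prod_{p\mid q}\Gamma_p$ together with the Chinese-remainder factorisation of the Weil representation into a (twisted) tensor product of the $\rho_p$ (Appendix \ref{weil-app}). Since an irreducible subrepresentation of a tensor product $\bigotimes_p \rho_p|_{\Gamma_p}$ has dimension equal to the product of dimensions of irreducible constituents of the factors, it suffices to prove: for $p > p_0(Q)$, every irreducible constituent of $\rho_p|_{\Gamma_p}$ has dimension $\geq p^{1-o(1)}$ (in fact $\gg p^{1/2}$ would already suffice, as the $o(1)$ is accounting for the bounded number of prime factors and the degree of the field extension $\F_p[\Delta]/\F_p$, which is at most $4$). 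So the whole problem is local.

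Next I would use the identification of $\Gamma_p$ carried out in Section \ref{sec10}: generically $\Gamma_p$ is, up to $\GL_8(\Z/p\Z)$-conjugacy, a copy of $\SL_2(\F_p[\Delta])$, where $\F_p[\Delta]$ is the \'etale $\F_p$-algebra generated by the image of the matrix discriminant. Because $\Delta$ has four distinct eigenvalues lying in $\overline{\Q}\setminus\{0,-1\}$, for all but finitely many $p$ the reduction $\bar\Delta$ has four distinct eigenvalues in $\overline{\F_p}$, so $\F_p[\Delta]$ is a product of finite fields $\F_{p^{d_1}}\times\cdots\times\F_{p^{d_k}}$ with $\sum d_i = 4$, and hence $\Gamma_p \cong \prod_i \SL_2(\F_{p^{d_i}})$. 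Again passing to tensor factors, it suffices to show: every irreducible constituent of $\rho_p|_{\SL_2(\F_{p^d})}$ either is trivial or has dimension $\gg p^{d/2}$. This is where the basic representation theory of $\SL_2(k)$, $k=\F_{p^d}$, collated in Appendix \ref{sl2-app}, comes in: the nontrivial irreducible representations of $\SL_2(k)$ all have dimension $\geq (|k|-1)/2 \gg |k| = p^d$, with the sole low-dimensional exception being the trivial representation (the Steinberg has dimension $|k|$, principal series have dimension $|k|+1$, cuspidals $|k|-1$, and the two "halves" of the $(|k|\pm1)/2$-dimensional representations appearing at the Weil-representation level are still of order $|k|$). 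Thus, as the outline says, the only way $\rho_p|_{\Gamma_p}$ can fail quasirandomness of strength $p^{1-o(1)}$ is if it contains the trivial representation of $\Gamma_p$ — i.e. a nonzero $\Gamma_p$-fixed vector in $\ell^2((\Z/p\Z)^4)$.

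The main obstacle, therefore, is exactly the one flagged in the outline: ruling out a nonzero $\Gamma_p$-invariant vector. This is deferred to Section \ref{sec11} and is where the genericity hypothesis on $\Delta$ (distinct eigenvalues, none equal to $0$ or $-1$) must really be used — the excluded eigenvalue $-1$ and the distinctness are precisely what prevent the action of the $12$-dimensional group $\Gamma_p$ in the Weil representation from having a common eigenvector. I would prove this by the direct, computational route: take the explicit generators $g^{-(r)}g^{(s)}$ of $\Gamma_p$ (built from the symplectic element $g(Q)$ of Definition \ref{symplectic-element} and its dilates), translate their action on $f\in\ell^2((\Z/p\Z)^4)$ through the three building blocks of the Weil representation in Proposition \ref{weil-prop} (dilations $s(E)$, the Fourier transform $J$, and quadratic modulations $l(W)$), and show the only $f$ simultaneously fixed (up to the scalars $\xi$) is $f=0$. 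Concretely one expects that invariance under a suitable one-parameter family of quadratic modulations $l(W)$ forces $\hat f$ to be supported on a variety cut out by the quadratic forms in question, while invariance under the dilation or Fourier-transform pieces is incompatible with that support unless $f\equiv 0$; the distinctness of the eigenvalues of $\Delta$ ensures these constraints are "independent" in the right sense, and the exclusion of $-1$ and $0$ ensures the relevant forms are nondegenerate. Assembling these steps — local reduction, the $\SL_2(\F_{p^d})$ dimension bound, and the no-invariant-vector computation — yields the stated dimension lower bound $q^{1-o(1)}$.

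\begin{proof}[Proof sketch]
By Definition \ref{gamma-def} and the Chinese remainder theorem, $\Gamma_q \cong \prod_{p \mid q} \Gamma_p$, and by the analysis of the Weil representation over squarefree moduli (Appendix \ref{weil-app}), $\rho|_{\Gamma_q}$ is a tensor product, over $p \mid q$, of (twisted) Weil representations $\rho_p|_{\Gamma_p}$. An irreducible constituent of such a tensor product has dimension equal to the product of dimensions of irreducible constituents of the factors, so it suffices to prove that every irreducible constituent of $\rho_p|_{\Gamma_p}$ has dimension $\gg_Q p^{1/2}$, for all primes $p > p_0(Q)$.

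By the computation of $\Gamma_p$ in Section \ref{sec10}, for $p > p_0(Q)$ the group $\Gamma_p$ is conjugate in $\GL_8(\Z/p\Z)$ to $\SL_2(\F_p[\Delta])$. Genericity of $Q$ implies that for $p > p_0(Q)$ the reduction of $\Delta$ has four distinct nonzero eigenvalues in $\overline{\F_p}$, so $\F_p[\Delta] \cong \prod_{i=1}^k \F_{p^{d_i}}$ with $\sum_i d_i = 4$, and hence $\Gamma_p \cong \prod_{i=1}^k \SL_2(\F_{p^{d_i}})$. Passing to tensor factors once more, it is enough to show that every nontrivial irreducible constituent of $\rho_p|_{\SL_2(\F_{p^d})}$ (for any of the factor subgroups) has dimension $\gg p^{d/2}$.

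By the representation theory of $\SL_2(k)$ for $k = \F_{p^d}$ collected in Appendix \ref{sl2-app}, every nontrivial irreducible representation of $\SL_2(k)$ has dimension at least $\tfrac12(|k|-1) \gg |k| = p^d$. Consequently, if $\rho_p|_{\Gamma_p}$ had an irreducible constituent of dimension $o(p^{1/2})$, that constituent would have to be the trivial representation of $\Gamma_p$; equivalently, $\Gamma_p$ would have a nonzero fixed vector in $\ell^2((\Z/p\Z)^4)$. That this cannot happen for generic $Q$ and $p > p_0(Q)$ is the content of Section \ref{sec11}. Granting that, every irreducible constituent of $\rho_p|_{\Gamma_p}$ has dimension $\gg_Q p^{1/2}$, and multiplying over the (boundedly many) prime factors of $q$ gives the stated bound $q^{1-o(1)}$.
\end{proof}
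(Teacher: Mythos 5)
Your proposal follows the paper's argument in essentially every respect: reduce via the Chinese remainder theorem and the tensor-product structure of the Weil representation to the prime case, invoke the identification $\Gamma_p \cong \prod_i \SL_2(\F_{p^{d_i}})$ from Section~\ref{sec10}, quote the lower bound $\tfrac12(|k|-1)$ on nontrivial irreducibles of $\SL_2(k)$ from Appendix~\ref{sl2-app}, and reduce to ruling out a nonzero $\Gamma_p$-fixed vector, which is exactly the content of Proposition~\ref{key-nontrivial}. That is the paper's proof.

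One bookkeeping slip is worth flagging, though. You assert that it would suffice to show each local constituent has dimension $\gg p^{1/2}$, attributing the loss to ``the bounded number of prime factors,'' and your final sentence multiplies bounds of the form $p^{1/2}$ to claim $q^{1-o(1)}$. That does not work: $q$ squarefree with all prime factors $> p_0(Q)$ can have $\asymp \log q/\log p_0$ prime factors (not $O(1)$), and $\prod_{p\mid q} p^{1/2} = q^{1/2}$, which is \emph{not} $q^{1-o(1)}$. What actually saves the argument — and what your $\SL_2$ bound genuinely delivers — is the per-prime bound $\geq \tfrac12(p-1)$, i.e. linear in $p$; then $\prod_{p\mid q}\tfrac12(p-1) = q\cdot\prod_{p\mid q}\tfrac{p-1}{2p} = q^{1-o(1)}$ using $\omega(q) = o(\log q)$. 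So you have proved the right local bound, but stated the wrong (insufficient) local target; the exponent must be $1-o(1)$, not $1/2$. You might also note explicitly, as the paper does, that the local factors are the \emph{twisted} Weil representations $\tilde\rho_i$, so the no-fixed-vector statement is needed in the form of Proposition~\ref{key-nontrivial} (for an arbitrary dilate $\tilde\rho$), which is fine since $\Gamma_p$ is dilation-invariant.
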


We will prove this in Sections \ref{sec10} and \ref{sec11}.

Propositions \ref{subgroup-spread} and \ref{quasirandom-gamma} are precisely what is needed in order to apply Proposition \ref{matrix-avg}, and the conclusion is precisely Proposition \ref{matrix-form}. Therefore we have, as the remaining outstanding tasks, the proofs of these two propositions.

\section{Uniform distribution of convolution powers}\label{unif-conv-power}

In this section we establish Proposition \ref{subgroup-spread}. Our proof of this statement has a hint of model theory about it, though we will not use that language. As remarked in the introduction, it is somewhat related to Tao's argument in \cite{tao-spectral}. Here is a rough plan of the proof. 
\begin{enumerate}
\item (Step 1) Consider first the case $q = p$ a sufficiently large prime. We argue that the sequence $\Vert \mu_p^{(2^j)} \Vert$, $j = 0,1,2,\dots$ (which is non-increasing by Young's inequality) stabilises at some time $t = O(1)$, in the sense that $\Vert \mu_p^{(2^{t+1})} \Vert = (1 + O(p^{-1/2})) \Vert \mu_p^{(2^t)} \Vert$. This uses the Lang-Weil bound from algebraic geometry. 
\item (Step 2) By standard arguments from additive combinatorics (recalled in Appendix \ref{appc}), this implies that $\Vert \mu_p^{(2^t)} - \mu_H \Vert \ll p^{-c} \Vert \mu_H \Vert$, for some subgroup $H \leq \Sp_8(\Z/p\Z)$.
\item (Step 3) By some group-theoretic arguments, $H$ must in fact be $\Gamma_p$.
\item (Step 4) Taking a few further convolution powers, we upgrade the estimate to a much stronger bound $\Vert \mu^{(s)} - \mu_{\Gamma_p} \Vert_{\infty} \ll p^{-100}$.
\item (Step 5) We deduce the general squarefree case of Proposition \ref{subgroup-spread}.
\end{enumerate}

\emph{Lang-Weil estimate.} We keep algebro-geometric terminology to an absolute minimum. A good down-to-earth account of what we need may be found in \cite[Chapter 9]{tao-expansion}. Let $M$ be a real parameter. Then for the purposes of this paper, by a variety of complexity $\leq M$ defined over $\F_p$ we mean a set of points of the form 
\[ V := \{x \in  \overline{\F}_p^n : P_1(x) = \cdots = P_m(x) = 0\},\] where $P_1,\dots, P_m \in \F_p[X_1,\dots, X_n]$ are polynomials all of degree $\leq M$, and $m,n \leq M$. Denote by $V(\F_p)$ the $\F_p$-points of $V$, that is to say the points of $V$ all of whose coordinates lie in $\F_p$.
\begin{proposition}[Lang-Weil]\label{lang-weil}
We have
\[ |V(\F_p)| = (c(V) + O_M(p^{-1/2})) p^{\dim V},\] for some integer $c(V) = O_M(1)$.
\end{proposition}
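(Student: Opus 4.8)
The statement is the classical theorem of Lang and Weil, and in the precise form wanted here — relative error $O_M(p^{-1/2})$, with $c(V)$ an integer of size $O_M(1)$, all implied quantities depending only on the complexity parameter $M$ — it is available off the shelf, so my plan is simply to invoke the self-contained account in \cite[Chapter 9]{tao-expansion}. For the record, let me indicate the shape of the argument that lies behind it.

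First I would reduce to the geometrically irreducible case. Decompose $V$ over $\overline{\F}_p$ into its geometrically irreducible components $V = V_1 \cup \cdots \cup V_k$; by effective elimination theory (a quantitative B\'ezout-type bound) the number $k$ of components, their dimensions, and their degrees are all bounded purely in terms of $M$, and each $V_i$ is cut out by polynomials of $M$-bounded degree. The absolute Frobenius $\mathrm{Frob}_p$ permutes the $V_i$, and an $\F_p$-point of $V$ is $\mathrm{Frob}_p$-fixed, hence lies on a component that is itself $\mathrm{Frob}_p$-stable, i.e. defined over $\F_p$. Points on an intersection $V_i \cap V_j$ with $i \neq j$ lie on a variety of strictly smaller dimension, so by inclusion--exclusion and induction on $\dim V$ (the base case $\dim V = 0$ being trivial) one is reduced to showing: if $W$ is geometrically irreducible of dimension $d$, defined over $\F_p$, and of $M$-bounded complexity, then $|W(\F_p)| = p^d + O_M(p^{d-1/2})$. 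One then sets $c(V)$ to be the number of top-dimensional components of $V$ that are defined over $\F_p$.

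To prove that last assertion I would induct on $d$. The case $d = 1$ is Weil's ``Riemann Hypothesis'' bound for curves over finite fields: passing to the normalization $\widetilde{C}$ of the projective closure, $|\widetilde{C}(\F_p)| = p + 1 + O(g\sqrt{p})$, the genus $g$ is bounded in terms of the degree (hence of $M$) by the genus--degree inequality, and $|W(\F_p)|$ differs from $|\widetilde{C}(\F_p)|$ by $O_M(1)$. For $d \geq 2$ I would slice by hyperplanes: since $p$ is large, all but a proportion $O_M(p^{-1})$ of the $\F_p$-rational hyperplanes $H$ meet $W$ in a geometrically irreducible variety of dimension $d-1$ and $M$-bounded complexity (a Bertini-type statement, valid precisely because $p$ is large), while each of the exceptional hyperplanes contributes the trivially bounded count $O_M(p^{d-1})$. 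Double-counting incidences $(x,H)$ with $x \in W(\F_p) \cap H$ — using the inductive estimate $|W(\F_p) \cap H| = p^{d-1} + O_M(p^{d-3/2})$ for good $H$ together with the exact number of $\F_p$-rational hyperplanes through a fixed point — then yields $|W(\F_p)| = p^d + O_M(p^{d-1/2})$.

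The hard part is entirely the curve case $d = 1$, which rests on the (deep) Riemann Hypothesis for curves over finite fields; everything else is bookkeeping. The only genuine care needed is to keep every geometric quantity — the number and degrees of the components, the genera of the curve sections, the exceptional Bertini locus — uniformly bounded in terms of $M$, so that $c(V) = O_M(1)$ and the implied constant are truly independent of $p$. Since all of this is packaged in exactly the form we need in \cite[Chapter 9]{tao-expansion}, for the paper I would simply record the proposition and cite that reference rather than reproduce the proof.
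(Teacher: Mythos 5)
Your proposal matches the paper exactly: the paper states Lang--Weil and simply cites \cite[Chapter 9]{tao-expansion} for a self-contained account, giving no proof of its own, and notes in a remark (as you do) that $c(V)$ is the number of top-dimensional components of $V$ defined over $\F_p$. The proof sketch you supply (reduction to geometrically irreducible varieties via Frobenius, induction on dimension via Bertini slicing, curve case from the Riemann Hypothesis for curves) is a correct and standard account of what lies behind the cited result, but the paper does not reproduce it either.
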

\emph{Remark.} In fact, $c(V)$ is the number of top-dimensional components of $V$ which are definable over $\F_p$, but we shall not need this description. Nor will we need to really know what dimension means, other than that it is an integer in the range $0 \leq \dim V \leq n$. All we need is the fact that the quantities $|V(\F_p)|$ are restricted to a rather discretised set of values. This kind of application of Lang-Weil has appeared in several model-theoretic works and is related to the concept of \emph{stability}.

The Lang-Weil estimate has the following consequence for convolution powers of our measures $\mu_p$. 
\begin{lemma}\label{lem9.2}
There are functions $\alpha, \beta : \N \times \{\textup{primes}\} \rightarrow \Z_{\geq 0}$ and non-decreasing functions $\alpha_* \rightarrow \Z_{\geq 0}$ and $p_0 : \N \rightarrow \N$ such that for all $j \geq 1$ and for all primes $p$ we have
\begin{equation}\label{disc-ell-2} \Vert \mu_p^{(2^j)} \Vert^2 = (\alpha(j,p) + O_j(p^{-1/2})) p^{-\beta(j,p)},\end{equation} where $\alpha(j,p), \beta(j,p)$ are integers with $0 < \alpha(j,p) \leq \alpha_*(j)$.
Moreover, if $p \geq p_0(j)$ then we have
\begin{equation}\label{b-bounds} 0 \leq \beta(j,p) \leq 36;\end{equation}
\begin{equation}\label{dimension-reduct} \beta(j+1,p) \geq \beta(j,p)\end{equation} and
\begin{equation}\label{component-reduct} \alpha(j+1,p) \leq \alpha(j,p) \quad \mbox{if} \quad \beta(j+1,p) = \beta(j,p).\end{equation}
\end{lemma}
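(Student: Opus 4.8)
The plan is to express $\Vert \mu_p^{(2^j)} \Vert^2$ directly as a normalised count of $\F_p$-points on an explicit variety, and then read off all the claimed properties from the Lang--Weil estimate (Proposition \ref{lang-weil}) together with elementary monotonicity considerations. First I would unfold the definitions: $\mu_p$ is supported on the $\phi(p) = p-1$ points $g^{(r)}$, $r \in \F_p^*$, each of equal mass $\tfrac{1}{p-1}$, and $g^{(r)} = \smallpmat{A}{r^{-1}B}{rC}{D}$ is a rational function of $r$. Hence $\mu_p^{(2^j)}(x) = (p-1)^{-2^j} \#\{(r_1,\dots,r_{2^j}) \in (\F_p^*)^{2^j} : g^{-(r_1)} g^{(r_2)} g^{-(r_3)} \cdots g^{(r_{2^j})} = x\}$, and consequently
\[ \Vert \mu_p^{(2^j)} \Vert^2 = \sum_x \mu_p^{(2^j)}(x)^2 = (p-1)^{-2^{j+1}} \, \#\{(\mathbf{r}, \mathbf{r}') \in (\F_p^*)^{2^{j+1}} : w_j(\mathbf r) = w_j(\mathbf r')\}, \]
where $w_j$ denotes the alternating word of length $2^j$ in the dilates. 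The key point is that the condition $w_j(\mathbf r) = w_j(\mathbf r')$ is a system of polynomial equations (in fact, after clearing denominators — note $r^{-1}$ is a polynomial in $r$ of bounded degree once we also carry the equation $r \cdot r^{-1} = 1$, or one works on the open set $\prod r_i \neq 0$) of bounded degree and bounded number of variables, i.e. it cuts out a variety $V_j$ of complexity $O_j(1)$ in affine $2^{j+1}$-space (the entries of $A,B,C,D$ are fixed constants in $\F_p$ coming from $Q$; since we have already reduced to $p$ coprime to $\det b$ these are honest elements of $\F_p$). Strictly, to handle the $r_i \neq 0$ constraints and the $r_i^{-1}$ cleanly one introduces auxiliary coordinates $s_i$ with $r_i s_i = 1$, keeping complexity bounded; I'd spell this out but it is routine.

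With $V_j$ in hand, Lang--Weil gives $\#V_j(\F_p) = (c(V_j) + O_j(p^{-1/2})) p^{\dim V_j}$ with $c(V_j)$ a positive integer bounded in terms of $j$ only (positive because $V_j$ always contains the ``diagonal'' $\mathbf r = \mathbf r'$, which is a subvariety defined over $\F_p$ of dimension $2^j$, so $\dim V_j \geq 2^j$ and the leading count is nonzero). Dividing by $(p-1)^{2^{j+1}} = p^{2^{j+1}}(1 + O(p^{-1}))$ and absorbing the $(1+O(p^{-1}))$ into the error term, we obtain
\[ \Vert \mu_p^{(2^j)} \Vert^2 = (\alpha(j,p) + O_j(p^{-1/2})) \, p^{-\beta(j,p)}, \qquad \beta(j,p) := 2^{j+1} - \dim V_j(\F_p), \quad \alpha(j,p) := c(V_j). \]
Here I should be a little careful that $c(V_j)$ and $\dim V_j$ may depend on $p$ (the variety is defined over $\F_p$ and its geometry can jump with $p$), hence the notation $\alpha(j,p), \beta(j,p)$; both are integers, $\alpha(j,p) \geq 1$, and $\alpha(j,p) \leq \alpha_*(j) := O_j(1)$ by the uniform bound in Lang--Weil. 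This establishes \eqref{disc-ell-2} and the boundedness of $\alpha$. For \eqref{b-bounds}: $\mu_p^{(2^j)}$ is supported on $\Gamma_p \leq \Sp_8(\F_p)$, so $\Vert \mu_p^{(2^j)} \Vert^2 \geq |\Gamma_p|^{-1} \geq |\Sp_8(\F_p)|^{-1} \gg p^{-36}$ by \eqref{sp-8-size}, forcing $\beta(j,p) \leq 36$ once $p$ is large enough (depending on $j$, since the implied constant in \eqref{disc-ell-2} is $O_j(1)$); and $\beta(j,p) \geq 0$ because $\Vert \mu_p^{(2^j)} \Vert^2 \leq \Vert \mu_p^{(2^j)} \Vert_\infty \leq 1$ — actually $\leq \max_x \mu^{(2^j)}(x) \leq 1$ — so for large $p$ the integer $\beta(j,p)$ cannot be negative. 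Collecting the finitely many ``$p$ large enough (depending on $j$)'' conditions for $j$ up to any needed range into the non-decreasing function $p_0(j)$ handles the quantifiers.

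For the monotonicity statements \eqref{dimension-reduct} and \eqref{component-reduct} I would argue purely from $\Vert \mu_p^{(2^{j+1})} \Vert \leq \Vert \mu_p^{(2^j)} \Vert$, which holds by Young's convolution inequality (symmetrised convolution only decreases the $\ell^2$ norm: $\Vert \nu^\circ \ast \mu \ast \nu \ast \mu^\circ \cdots \Vert \leq \Vert \mu \Vert$ type estimates, ultimately $\Vert \mu^\circ \ast \mu \Vert \leq \Vert \mu \Vert^2 / \Vert \mu \Vert \cdot \dots$ — more simply $\Vert \mu \ast \nu \Vert \leq \Vert \mu \Vert_1 \Vert \nu \Vert = \Vert \nu \Vert$ since $\mu$ is a probability measure, and iterate). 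Squaring: $(\alpha(j+1,p) + O_j(p^{-1/2})) p^{-\beta(j+1,p)} \leq (\alpha(j,p) + O_j(p^{-1/2})) p^{-\beta(j,p)}$. Since both $\alpha$'s are bounded integers $\geq 1$, for $p$ large (depending on $j$) this inequality of the form $a p^{-\beta'} \lesssim a' p^{-\beta}$ with $a, a' \in \{1,\dots,\alpha_*(j)\}$ forces $\beta' \geq \beta$, i.e. \eqref{dimension-reduct}; and in the boundary case $\beta' = \beta$ it forces $a \leq a'$ up to the $p^{-1/2}$ errors, hence (integers!) $a \leq a'$ for $p$ large, i.e. \eqref{component-reduct}. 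Again one enlarges $p_0(j)$ to absorb these finitely many thresholds. The main obstacle — and the only genuinely non-formal point — is the first paragraph: verifying carefully that $w_j(\mathbf r) = w_j(\mathbf r')$, after the substitution eliminating the inverses $r_i^{-1}$, really is a bounded-complexity polynomial system over $\F_p$ uniformly in $p$, so that Lang--Weil applies with $M = O_j(1)$. This is a matter of bookkeeping with the explicit block-matrix formula \eqref{gr-def} for $g^{(r)}$, but it must be done with some care so that the degrees and number of variables do not secretly depend on $p$.
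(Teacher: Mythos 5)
Your argument matches the paper's almost line for line: interpret $\Vert \mu_p^{(2^j)}\Vert^2$ as a normalised $\F_p$-point count on a bounded-complexity variety (handling the inverses by adjoining auxiliary coordinates — you use $r_is_i=1$, the paper uses a single $y$ with $r_1\cdots r_{2^j}y=1$, both of which work), apply Lang--Weil to get $\eqref{disc-ell-2}$ with $\beta(j,p)=2^{j+1}-\dim V_j$ and $\alpha(j,p)=c(V_j)$, read off $\eqref{b-bounds}$ from the trivial two-sided bounds $|\Sp_8(\Z/p\Z)|^{-1}\le\Vert\nu\Vert^2\le1$ on any probability measure, and derive $\eqref{dimension-reduct}$, $\eqref{component-reduct}$ from the monotonicity $\Vert\mu_p^{(2^{j+1})}\Vert\le\Vert\mu_p^{(2^j)}\Vert$ supplied by Young's inequality, taking $p$ large enough that the $O_j(p^{-1/2})$ errors cannot overturn integer inequalities.

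One small point to be careful about: your parenthetical justification that $c(V_j)>0$ — ``$V_j$ contains the diagonal, an $\F_p$-definable subvariety of dimension $2^j$, so the leading count is nonzero'' — is not a valid inference as stated. The diagonal contributes to $c(V_j)$ only if it is a \emph{top-dimensional} component of $V_j$; if $\dim V_j > 2^j$ and none of the top-dimensional components happen to be defined over $\F_p$, one could in principle have $c(V_j)=0$ even though $V_j(\F_p)\neq\emptyset$. (The paper is in fact equally terse on this point, simply asserting $\alpha(j,p)>0$ in the statement.) This can be repaired, e.g. by observing directly that $\Vert\mu_p^{(2^j)}\Vert^2\ge \mu_p^{(2^j)}(e)^2>0$ grows like $p^{-\beta}$ rather than $p^{-\beta-1/2}$, or by arguing that the diagonal really is a component; but in any case it is a minor issue, not affecting the structure of the argument.
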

\begin{proof}
The key point is to interpret $\Vert \mu_p^{(2^j)} \Vert^2$ in terms of the number of $\F_p$-points on a variety of bounded complexity. To this end, we have
\begin{align*} &  \Vert \mu^{(2^j)}_p \Vert^2  = \sum_{x} \mu_p^{(2^j)}(x)^2 \\ & = (p-1)^{-2^{j+1}}\# \{ (r, r') \in (\F_p^*)^{2^{j}} \times (\F_p^*)^{2^{j}}  : g^{-(r_1)} g^{(r_2)} \cdots g^{(r_{2^j})} = \\ & \qquad\qquad\qquad \qquad\qquad\qquad \qquad\qquad\qquad = g^{-(r'_1)} g^{(r'_2)} \cdots  g^{(r'_{2^j})}\} \\ & =  (p-1)^{-2^{j+1}}\# \{ (r,r', y, y') \in \F_p^{2^{j}} \times \F_p^{2^j} \times \F_p \times \F_p : r_1 \cdots r_{2^j} y = \\  & \qquad\qquad = r'_1 \cdots r'_{2^j} y' = 1, \; \;   g^{-(r_1)} g^{(r_2)} \cdots g^{(r_{2^j})} =  g^{-(r'_1)} g^{(r'_2)} \cdots  g^{(r'_{2^j})}\} \\ & = (p-1)^{-2^{j+1}} |V_j(\F_p)| = (1 + O_j(\frac{1}{p})) p^{-2^{j+1}} | V_j(\F_p)|,\end{align*} where $V_j \subset \overline{\F}_p^{2^{j+1} + 2}$ is some variety of complexity $O_j(1)$, defined over $\F_p$. Note here that, although (for instance) $g^{(r_1)} = \smallpmat{-2b^{-T} c}{r_1^{-1} b^{-T}}{r_1(4ab^{-T}c - b)}{-2ab^{-T}}$ is not \emph{a priori} given by polynomials, we can express
\[ g^{(r_1)} = \pmat{-2b^{-T} c}{r_2 \cdots r_{2^j} y b^{-T}}{r_1 (4ab^{-T} c - b)}{-2ab^{-T}},\] and this \emph{is} given by polynomials. (Alternatively, one could talk about quasiprojective varieties, but the trick of introducing $y, y'$ avoids the need to do that.)

This immediately implies, by the Lang-Weil bound, the first statement \eqref{disc-ell-2} (with $\beta(j,p) = 2^{j+1} - \dim V_j$). We now proceed to derive the additional statements \eqref{b-bounds}, \eqref{dimension-reduct} and \eqref{component-reduct}, which we do by combinatorial means (with reference to \eqref{disc-ell-2}).

For \eqref{b-bounds}, note that \emph{any} probability measure $\nu$ on a finite group $G$ satisfies $|G|^{-1} \leq \Vert \nu \Vert^2 \leq 1$; the lower bound is Cauchy-Schwarz, and the upper bound is the trivial bound (with equality only if $\nu$ is concentrated at one point). Since (see \eqref{sp-8-size}) $|\Sp_8(\Z/p\Z)| = (1 + o(1)) p^{36}$, \eqref{b-bounds} follows if $p$ is large enough.

For items \eqref{dimension-reduct} and \eqref{component-reduct} we use Young's inequality (Lemma \ref{young}), which implies that $\Vert \mu_p^{(2^j)} \Vert^2$ is a non-increasing function of $j$. Therefore we have, by \eqref{disc-ell-2}, 
\[ (\alpha(j+1,p) + O_j(p^{-1/2})) p^{-\beta(j+1,p)} \leq (\alpha(j,p) + O_j(p^{-1/2})) p^{-\beta(j,p)}.\]
Here, $\alpha(j+1,p), \alpha(j,p)$ are positive integers of size $O_j(1)$ and so by taking $p$ sufficiently large in terms of $j$ both \eqref{dimension-reduct} and \eqref{component-reduct} follow.

Finally, note that $\alpha(j,p)$ is bounded above by $O_{M_j}(1)$, where $M_j = O_j(1)$ is an upper bound for the complexity of $V_j$. Thus $\alpha(j,p) \leq \alpha_*(j)$ for some function $\alpha_*$, which we may clearly assume to be non-decreasing (else replace it by $\sup_{i \leq j} \alpha_*(i)$). For the same reasons, we may also assume that $p_0$ is a non-decreasing function.
\end{proof}
\emph{Remark.} There should probably be a more purely algebro-geometric way to see the inequalities \eqref{dimension-reduct}, \eqref{component-reduct} in terms of the number of connected components and dimensions of the relevant $V$s, but this would certainly take much longer to set up than the argument we gave above.

We now turn to the realisation of Step 1 of the outline.

\begin{proposition}\label{stable-ell-2}
Suppose that $p$ is sufficiently large. Then there is some $t = O(1)$ such that $\Vert \mu_p^{(2^{t+1})} \Vert = (1 + O(p^{-1/2}))\Vert \mu_p^{(2^t)} \Vert$.
\end{proposition}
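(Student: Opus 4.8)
The plan is to combine the discretisation supplied by Lemma~\ref{lem9.2} with a short counting argument. Abbreviate $\alpha_j := \alpha(j,p)$ and $\beta_j := \beta(j,p)$, and recall that for $p \geq p_0(j)$ we have from \eqref{disc-ell-2}, \eqref{b-bounds}, \eqref{dimension-reduct} and \eqref{component-reduct} that $\Vert\mu_p^{(2^j)}\Vert^2 = (\alpha_j + O_j(p^{-1/2}))\,p^{-\beta_j}$, that $\beta_j$ is a non-decreasing integer sequence with $0 \leq \beta_j \leq 36$, that $1 \leq \alpha_j \leq \alpha_*(j)$ with $\alpha_*$ non-decreasing, and that $\alpha_{j+1} \leq \alpha_j$ whenever $\beta_{j+1} = \beta_j$. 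Everything reduces to the following claim: there is an absolute constant $C_0$ so that, for every sufficiently large $p$, some index $t \in \{1,\dots,C_0\}$ satisfies both $\beta_{t+1} = \beta_t$ and $\alpha_{t+1} = \alpha_t$. Granting this, the proposition is immediate: for such a $t$,
\[ \frac{\Vert\mu_p^{(2^{t+1})}\Vert^2}{\Vert\mu_p^{(2^t)}\Vert^2} = p^{\beta_t - \beta_{t+1}}\cdot\frac{\alpha_{t+1} + O_t(p^{-1/2})}{\alpha_t + O_t(p^{-1/2})} = \frac{\alpha_t + O_t(p^{-1/2})}{\alpha_t + O_t(p^{-1/2})} = 1 + O(p^{-1/2}), \]
using $\beta_{t+1} = \beta_t$, $\alpha_{t+1} = \alpha_t \geq 1$ and the fact that $t \leq C_0 = O(1)$, so that $O_t$ is an absolute $O$; taking square roots gives the statement.

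To establish the claim I would run a bounded iteration exploiting the single structural fact that $\beta$ can strictly increase at most $36$ times, since it is integer-valued, non-decreasing and confined to $[0,36]$. Put $j_0 := 1$. Given $j_i$, inspect the steps $j_i, j_i+1,\dots, j_i + \alpha_*(j_i)$. If $\beta$ strictly increases at one of them, let $j_{i+1}$ be one more than the first such step (so $j_{i+1} \leq j_i + \alpha_*(j_i) + 1$ and $\beta_{j_{i+1}} > \beta_{j_i}$) and continue. Otherwise $\beta$ is constant on $\{j_i,\dots, j_i+\alpha_*(j_i)+1\}$, so by \eqref{component-reduct} the values $\alpha_{j_i} \geq \alpha_{j_i+1} \geq \cdots \geq \alpha_{j_i + \alpha_*(j_i)+1} \geq 1$ form a non-increasing run of $\alpha_*(j_i)+2$ positive integers, each at most $\alpha_{j_i} \leq \alpha_*(j_i)$; by pigeonhole two consecutive entries coincide, yielding a $t \in \{j_i,\dots,j_i+\alpha_*(j_i)\}$ with $\alpha_{t+1} = \alpha_t$ and $\beta_{t+1} = \beta_t$, and we stop. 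The first (``increase'') branch can be taken at most $36$ times, since each occurrence consumes a distinct step at which $\beta$ increases; hence after at most $36$ increase-rounds the next round necessarily lands in the second branch and outputs $t$. Since $j_0 = 1$, $j_{i+1} \leq j_i + \alpha_*(j_i)+1$, and $\alpha_*$ is a fixed non-decreasing function, both $j_{36}$ and $\alpha_*(j_{36})$ are absolute constants, so $t \leq j_{36} + \alpha_*(j_{36}) =: C_0$, an absolute constant.

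The remaining points are routine. Every invocation of Lemma~\ref{lem9.2} above involves an index at most $C_0+1$, so requiring $p \geq p_0(C_0+1)$ — an absolute threshold, as $p_0$ is non-decreasing — makes all the cited estimates valid at once; this is the meaning of ``$p$ sufficiently large'' in the statement. The one place that needs a moment's thought is precisely what the $36$-fold iteration buys us: the a priori bound $\alpha_*(j)$ genuinely grows with $j$, so one cannot merely say ``$\alpha$ is a bounded non-increasing integer sequence, hence stabilises in $O(1)$ steps''. It is essential that $\beta$ makes at most $36$ jumps, so that $\alpha$ only ever has to be waited out inside a block whose starting index — and hence whose starting value $\alpha_{j_i} \leq \alpha_*(j_i)$ — is controlled by a \emph{bounded} quantity. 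This is also the step that genuinely uses Lang--Weil (through Lemma~\ref{lem9.2}): without the discretisation the ratio $\Vert\mu_p^{(2^{j+1})}\Vert/\Vert\mu_p^{(2^j)}\Vert$ could drift towards $1$ slowly over $\sim\log p$ steps rather than snapping to within $1 + O(p^{-1/2})$ after $O(1)$ of them.
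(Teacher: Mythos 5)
Your proof is correct and follows essentially the same route as the paper: both rest on Lemma~\ref{lem9.2} and the observation that $\beta$ is integer-valued and confined to $[0,36]$, so that $\alpha$ must be waited out within a constant-$\beta$ block whose starting index is controlled by a bounded iteration of $\alpha_*$. The only cosmetic difference is that you run a forward iteration that directly constructs the stopping index $t$, whereas the paper phrases the same bounded recursion (via the sequence $T_1,\dots,T_{37}$ with $T_{i+1}=T_i+\alpha_*(T_i)$) as a proof by contradiction.
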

\begin{proof}
We use Lemma \ref{lem9.2} and the notation there. It clearly suffices to show that, for some $t = O(1)$, we have 
\begin{equation}\label{to-show} \alpha(t+1, p) = \alpha(t,p) \quad \mbox{and} \quad \beta(t+1,p) = \beta(t,p).\end{equation}

Define a sequence $T_1,T_2,\dots, T_{37}$ as follows. Set $T_1 := \alpha_*(1)$, and then inductively define $T_{i+1} := T_i + \alpha_*(T_i)$ for $i = 1,2,3,\dots, 36$. If $p \geq p_0(T_{37})$ we have the bounds \eqref{b-bounds}, \eqref{dimension-reduct} and \eqref{component-reduct}, for all $j \leq T_{37}$. 

We claim that there is some $t \leq T_{37}$ such that \eqref{to-show} holds. Suppose not. Then, by \eqref{dimension-reduct}, \eqref{component-reduct} we have that for all $j \leq T_{37}$ either 
\begin{enumerate}
\item  $\beta(j+1,p) > \beta(j,p)$ or
\item  $\beta(j+1, p) = \beta(j,p)$ and $\alpha(j+1,p) < \alpha(j,p)$. 
\end{enumerate}
By \eqref{b-bounds}, there are at most $36$ values of $j$ for which (1) occurs; suppose they are $T'_1,\dots, T'_m$, $m \leq 36$. 

For $j = 1,2,\dots, T'_1 - 1$ we must have (2), which means that $\alpha(T'_1,p) \leq \alpha(1,p) - T'_1\leq \alpha_*(1) - T'_1 +1 = T_1 - T'_1 + 1$. Since $\alpha(j,p)$ is always positive, this implies that $T'_1 \leq T_1$. 

Now for $j = T'_1 + 1, \dots, T'_2 - 1$ we must also have (2), which means that 
\begin{align*}
\alpha(T'_2, p) & \leq \alpha(T'_1, p) - (T'_2 - T'_1 -1)\\ & \leq \alpha_*(T'_1) -  (T'_2 - T'_1 - 1) \leq \alpha_*(T_1) + T_1 - T'_2 + 1.
\end{align*}
Since $\alpha(j,p)$ is always positive, this implies that $T'_2 \leq \alpha_*(T_1) + T_1 = T_2$. Continuing in this manner we see inductively that $T'_m \leq T_m \leq T_{36}$.
Continuing now with $j = T'_m + i$, $i = 1,2,\dots$, only (2) can occur, and so 
\[ \alpha(T'_m + i, p) \leq \alpha(T'_m, p) - i \leq \alpha_*(T'_m) - i \leq \alpha_*(T_{36}) - i.\]
Since (yet again) $\alpha(j,p)$ is always positive, this can only continue as far as $i = \alpha_*(T_{36})-1$ before we get a contradiction. Note that then $j  \leq T'_{m} + \alpha_*(T_{36}) \leq T_{37}$, so all the appeals we made to \eqref{b-bounds}, \eqref{dimension-reduct} and \eqref{component-reduct} were indeed valid. 

This contradiction shows that we were wrong to assume that there is no $t \leq T_{37}$ for which \eqref{to-show} holds.
\end{proof}

\textbf{Step 2.} The conclusion of Step 1 (Proposition \ref{stable-ell-2}) is that for some $t$, $1 \leq t \ll O(1)$, we have, for the symmetic measure $\nu := \mu^{(2^t)}$ the very strong ``flattening''
\[  \Vert \nu \ast \nu \Vert = (1 + O(p^{-1/2})\Vert \nu \Vert .\]
It is well-known that any probability measure satisfying this kind of property is close to uniform on a subgroup. The precise statement we need is Corollary \ref{stability-young-cor} in Appendix \ref{appc}, from which we conclude that there is some subgroup $H$ such that 
\begin{equation}\label{ell-2-close} \Vert \mu_p^{(2^t)} - \mu_H \Vert \ll p^{-c} \Vert \mu_H \Vert,\end{equation}
\begin{equation}\label{mu-power-group} \mu_p^{(2^t)}(H) \geq 1 - O(p^{-c}),\end{equation}
and
\begin{equation}\label{supp-mu-m}  | \Supp(\mu_p^{(2^t)})| > (1 - O(p^{-c})) |H|,\end{equation} where $\mu_H$ is the uniform measure on $H$.
It follows from \eqref{mu-power-group} that
\begin{align*}
1 - O(p^{-c}) \leq \mu_p^{(2^t)}(H) & = (\mu_p^{(2^t - 1)} \ast \mu_p)(H)  =  \\ & = \sum_x \mu_p^{(2^t - 1)}(x) \mu_p (x^{-1} H) \leq \sup_x \mu_p(x H).
\end{align*} 
Thus there is some coset $xH$ such that 
\begin{equation}\label{large-measure-coset} \mu_p(xH) \geq 1 - O(p^{-c}).\end{equation}

\textbf{Step 3.} In this step we use a group-theoretic argument, making use of some slightly specific features of the problem, to upgrade the statement \eqref{large-measure-coset} to $\mu(xH) = 1$, or in other words (recalling Definitions \ref{symplectic-element} and \ref{measure-def}) to show that all $g^{(r)}$, $r \in (\Z/p\Z)^*$, lie in $xH$.

Let $R := \{r \in (\Z/p\Z)^* : g^{(r^{-1})} \in xH\}$. Thus, by \eqref{large-measure-coset},
\begin{equation}\label{R-lower} |R| \geq (1 - O(p^{-c})) (p-1).\end{equation}
Perform the following algorithm to generate distinct elements $r_1, r_2,\dots$ of $R$ as long as possible. Write $S_j := \bigcap_{i \leq j} (xH)^{(r_i)}$. Each $S_j$ is a coset (of some subgroup of $\Sp_8(\Z/p\Z)$) and, no matter how we choose the $r_i$, we have the nesting
\[ S_1 \supseteq S_2 \supseteq \cdots \]
If, at step $j$ of the construction, it is possible to choose $r_{j+1} \in R$ so that $S_{j+1}$ is a \emph{proper} subset of $S_j$ then do so; otherwise, stop.

Note that, as long as the algorithm continues, we have $|S_{j+1}| \leq \frac{1}{2} |S_j|$ (since the $S_j$ are all cosets of subgroups). Therefore, the algorithm stops in at most $O(\log p)$ steps.

When the algorithm finishes, we have $r_1,\dots, r_m \in R$, $m = O(\log p)$, and a coset $S := S_m = \bigcap_{i=1}^m (xH)^{(r_i)}$ (of some subgroup). Note that, since $r_i \in R$, we have $g^{(r_i^{-1})} \in xH$ and so $g \in (xH)^{(r_i)}$, and therefore $g \in S$.

Now set 
\[ R' := r_1^{-1} R \cap \cdots \cap r_m^{-1} R,\]
and suppose that $r \in R'$. Since the algorithm we described stopped at the $m$th stage, we have
\[ S \cap (xH)^{(r_i r)} = S \qquad \mbox{for $i = 1,\dots, m$},\] since otherwise we could take $r_{m+1} := r_i r$ (which would be an element of $R$ by the definition of $R'$). It follows that 
\[ S \cap S^{(r)} = S \cap \bigcap_{i=1}^m (xH)^{(r_i r)} = S.\] That is, if $r \in R'$ then $S = S^{(r)}$. It follows that $S$ is invariant under the entire subgroup of $(\Z/p\Z)^*$ generated by $R'$.  However, 
\[ | R'| \geq 1 - m |(\Z/p\Z)^* \setminus R| > \frac{1}{2}(p-1),\] by \eqref{R-lower} and the fact that $m = O(\log p)$. Therefore the group generated by $R'$ is the whole of $(\Z/p\Z)^*$, and so we have that $S = S^{(r)}$ for all $r \in (\Z/p\Z)^*$. 

We showed earlier that $g \in S$. It now follows that $g^{(rr_1)} \in S$ for all $r \in (\Z/p\Z)^*$. In particular, $g^{(rr_1)} \in (xH)^{(r_1)}$, which implies that $g^{(r)} \in xH$ for all $r \in (\Z/p\Z)^*$, which is what we wanted to show. 

Consequently, all the elements $g^{-(r)} g^{(s)}$ lie in $H$. By definition (Definition \ref{gamma-def}), we have $\Gamma_p \leq H$.

It follows that $\Supp(\mu_p^{(2^t)}) \subseteq H$.  However, we showed in \eqref{supp-mu-m} that $|\Supp(\mu_p^{(2^t)})| > (1 - O(p^{-c})) |H| > \frac{1}{2}|H|$, and therefore the group generated by $\Supp(\mu_p^{(2^t)})$ is all of $H$. However, the group generated by $\Supp(\mu_p^{(2^t)})$ is the group generated by $\Supp(\mu_p^{(2)})$ which, as we remarked earlier, is precisely $\Gamma_p$. Finally, we may conclude that $H = \Gamma_p$. 
Therefore \eqref{ell-2-close} may be rewritten as
\begin{equation}\label{ell-2-close-new} \Vert \mu_p^{(2^t)} - \mu_{\Gamma_p} \Vert \ll p^{-c} \Vert \mu_{\Gamma_p} \Vert.\end{equation}

\textbf{Step 4.} In this step of the argument we upgrade \eqref{ell-2-close-new} to a highly uniform estimate by taking a few extra convolution powers. By Cauchy-Schwarz we have
\begin{align}\nonumber  \Vert \mu_p^{(2^{t+1})} - \mu_{\Gamma_p} \Vert_{\infty} & = \Vert (\mu_p^{(2^t)} - \mu_{\Gamma_p}) \ast  (\mu_p^{(2^t)} - \mu_{\Gamma_p}) \Vert_{\infty} \\ \nonumber & \leq \Vert  \mu_p^{(2^t)} - \mu_{\Gamma_p} \Vert^2 \\ \label{pre-uniform} & \ll p^{-2c} \Vert \mu_{\Gamma_p} \Vert^2 = p^{-2c} |\Gamma_p|^{-1}.\end{align}

However, if $\nu$ is some probability measure on a finite group $\Gamma$ of size $N$ and if \[ \Vert \nu - \mu_{\Gamma} \Vert_{\infty} \leq \frac{\eps}{N}\] then
\begin{align*} |\nu \ast \nu (x) - \frac{1}{N}| &  = |(\nu - \mu_{\Gamma}) \ast (\nu - \mu_{\Gamma})(x) | \\ & \leq \sum_y |\nu(y) - \frac{1}{N}||\nu(y^{-1} x) - \frac{1}{N}|  \leq \frac{\eps^2}{N},\end{align*} that is to say
\[ \Vert \nu^{(2)} - \mu_{\Gamma} \Vert_{\infty} \leq \frac{\eps^2}{N}.\]
Applying this $s$ times to \eqref{pre-uniform} gives
\[ \Vert \mu_p^{(2^{t+1 + s})} - \mu_{\Gamma_p} \Vert_{\infty} \leq (Cp^{-2c})^{2^s}|\Gamma_p|^{-1},\] and so, taking a suitably large $s$, there is some power of two $m= 2^{t + 1 + s} = O(1)$ such that
\begin{equation}\label{p100}\Vert \mu_p^{(m)} - \mu_{\Gamma_p} \Vert_{\infty} < p^{-38},\end{equation} provided (as always) $p$ is sufficiently large. Since we are free to choose any sufficiently large $s = O(1)$, we can make the choice so that $m$ is independent of $p$.

Note that \eqref{p100} is a much stronger version of Proposition \ref{subgroup-spread} in the case $q = p$ a sufficiently large prime.

Since $|\Gamma_p| \leq |\Sp_8(\Z/p\Z)| \ll p^{36}$ it follows from \eqref{p100} that 
\begin{equation}\label{one-p} \mu_p^{(m)} \leq (1 + O(p^{-2})) |\Gamma_{p}|^{-1}\end{equation} pointwise. This one-sided estimate is the only one we will need subsequently.\vspace*{8pt}

\textbf{Step 5.} Finally, we turn to the deduction of Proposition \ref{subgroup-spread} itself. That is, we pass from the case $q = p$ a prime to the general case. Suppose then that $q$ is squarefree, and that all its prime factors are sufficiently large (larger than $p_0(T_{37})$, the quantity appearing in Step 1, is enough). We have a natural homomorphism 
\[ \pi : \Sp_8(\Z/q\Z) \rightarrow \prod_{p | q} \Sp_8(\Z/p\Z).\] By the Chinese remainder theorem, the measure $\mu_q$ pushes forward under $\pi$ to the product $\times_{p | q} \mu_p$: we have $\pi(g^{(r)}) = (g^{(r \mdsub p)})_{p | q}$, and the tuple $(r \md{p})_{p | q}$ takes all values in $\prod_{p | q} (\Z/p\Z)^*$ as $r$ ranges over $(\Z/q\Z)^*$. 

Recall that $\Gamma_q$ is the group generated by the $g^{-(r)} g^{(s)}$, $r, s \in (\Z/q\Z)^*$. These groups also behave nicely under projection, as the following lemma shows. 

\begin{lemma}\label{gam-prod}
Suppose that $q$ is squarefree, and let $\pi : \Sp_8(\Z/q\Z) \rightarrow \prod_{p | q}\Sp_8(\Z/p\Z)$ be the natural isomorphism. Then $\pi(\Gamma_q) = \prod_{p | q} \Gamma_p$.
\end{lemma}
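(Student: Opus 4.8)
The plan is to use Goursat's lemma (or rather its iterate for products of several factors). First I would observe that the inclusion $\pi(\Gamma_q) \subseteq \prod_{p\mid q}\Gamma_p$ is immediate: each generator $g^{-(r)}g^{(s)}$ of $\Gamma_q$ maps, under the component at $p$, to $g^{-(r\bmod p)}g^{(s\bmod p)}$, which by definition lies in $\Gamma_p$. So the content is the reverse inclusion, i.e. that $\pi(\Gamma_q)$ is the \emph{whole} product. By induction on the number of prime factors it suffices to treat $q = p_1 p_2$ with $p_1 \neq p_2$: writing $q = p_1 q'$ with $q'$ coprime to $p_1$, once we know $\pi(\Gamma_{q'}) = \prod_{p\mid q'}\Gamma_p$ we are reduced to showing that the subgroup $\pi(\Gamma_q) \le \Gamma_{p_1} \times \Gamma_{q'}$ (with $\Gamma_{q'}$ now playing the role of a single factor via the already-established isomorphism) surjects onto both factors and hence, by Goursat, is the whole product provided the relevant quotients have no common composition factor.

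Concretely: let $\Gamma := \pi(\Gamma_q) \le \Gamma_{p_1} \times \Gamma_{p_2}$. The two coordinate projections of $\Gamma$ are surjective (they are exactly $\Gamma_{p_1}$ and $\Gamma_{p_2}$ by the definition of $\Gamma_p$ and the fact, noted in the text just before the lemma, that $(r\bmod p_1, r\bmod p_2)$ ranges over all of $(\Z/p_1\Z)^*\times(\Z/p_2\Z)^*$). By Goursat's lemma, $\Gamma$ being a subdirect product of $\Gamma_{p_1}\times\Gamma_{p_2}$ corresponds to an isomorphism between a quotient $\Gamma_{p_1}/N_1$ and a quotient $\Gamma_{p_2}/N_2$; and $\Gamma = \Gamma_{p_1}\times\Gamma_{p_2}$ precisely when these common quotients are trivial. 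Here I would invoke the structural input that the paper is about to establish in Section \ref{sec10}: generically $\Gamma_p$ is a conjugate of $\SL_2(\F_p[\Delta]) \cong \prod_i \SL_2(\F_{p^{d_i}})$ (the $d_i$ the degrees of the factors of the characteristic polynomial of $\Delta$ mod $p$). Since $\SL_2$ of a finite field of odd characteristic is quasisimple with simple quotient $\PSL_2$, the only nontrivial normal subgroups and quotients are built from these $\PSL_2(\F_{p^{d}})$; and $\PSL_2(\F_{p_1^{a}}) \not\cong \PSL_2(\F_{p_2^{b}})$ when $p_1 \neq p_2$ (orders are divisible by different primes, or one can cite the classical classification of when two $\PSL_2$'s are isomorphic — this is exactly the kind of fact collected in Appendix \ref{sl2-app}). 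Hence no common quotient exists, $N_1 = \Gamma_{p_1}$, $N_2 = \Gamma_{p_2}$, and $\Gamma$ is the full product.

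The main obstacle is that this argument as stated uses the explicit identification of $\Gamma_p$ from Section \ref{sec10}, which is logically downstream; if one wants Lemma \ref{gam-prod} to stand on its own, the cleaner route is to argue directly with Goursat using only that $\Gamma_p$ is \emph{perfect} (no nontrivial abelian quotient) — for then any common Goursat quotient $\Gamma_{p_1}/N_1 \cong \Gamma_{p_2}/N_2$ is perfect, and one still needs a reason it must be trivial, which again forces some input about the composition factors of $\Gamma_p$ at different primes. I would therefore structure the proof to cite the forward reference to Section \ref{sec10} (the paper is already organised so that $\Gamma_p$ is identified there), apply Goursat's lemma iteratively over the prime factors of $q$, and use the coprimality of the $p$'s together with the classification of $\PSL_2(\F_{p^d})$-isomorphisms to kill all common quotients. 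The routine bookkeeping — checking surjectivity of projections, and the induction on $\omega(q)$ — I would only sketch.
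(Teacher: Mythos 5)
Your proposal takes a genuinely different route from the paper, and it is instructive to compare them. The paper's proof is entirely elementary and self-contained. Given a target tuple $(\gamma_p)_{p\mid q}$ with $\gamma_p\in\Gamma_p$, it writes each $\gamma_p$ as a word
$g^{-(r_{1,p})}g^{(s_{1,p})}\cdots g^{-(r_{N,p})}g^{(s_{N,p})}$
in the generators of $\Gamma_p$, and then exploits two structural features of the generating set $\{g^{-(r)}g^{(s)}\}$: it contains the identity (take $r=s$), so one can pad all the words to a \emph{common} length $N$ independent of $p$; and it is closed under inversion, since $(g^{-(r)}g^{(s)})^{-1}=g^{-(s)}g^{(r)}$, so no inverse symbols are needed. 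One then lifts the entire array of parameters $(r_{i,p},s_{i,p})_{i\le N,\,p\mid q}$ simultaneously through the Chinese Remainder Theorem to $r_i,s_i\in(\Z/q\Z)^*$, and the word $g^{-(r_1)}g^{(s_1)}\cdots g^{-(r_N)}g^{(s_N)}\in\Gamma_q$ projects to the given tuple. Nothing beyond CRT and the definition of $\Gamma_q$ is used.

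Your Goursat/Ribet argument would also get there once fully fleshed out --- the surjectivity of coordinate projections (which you verify), the perfectness of $\Gamma_p$, and the non-isomorphism of $\PSL_2$'s across distinct odd characteristics do jointly rule out any nontrivial common Goursat quotient, and the induction on $\omega(q)$ is sound --- but at a real cost, some of which you anticipate. It relies on the identification $\Gamma_p\cong\SL_2(\F_p[\Delta])$ (Proposition~\ref{gen-prop}), which is proved only in Section~\ref{sec10}; this is a forward dependency the paper avoids entirely, and it matters because Lemma~\ref{gam-prod} is already invoked in Step~5 of Section~\ref{unif-conv-power}. (The circularity could be broken by reordering, since Proposition~\ref{gen-prop} is a pure-$p$ statement, but the direct proof makes this unnecessary.) Moreover, your argument only applies for generic $Q$ and primes past the threshold $p_0(Q)$, whereas the lemma as stated --- and the paper's proof --- holds for any odd squarefree $q$ coprime to $\det b$, with no genericity assumption. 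Finally, the paper's remark immediately after the lemma stresses that the result is sensitive to the exact generating set (contrast $\tilde\Gamma_q := \langle g^{(r)}\rangle$, where the padding and inverse-closure that drive the lifting are unavailable); the direct proof makes this sensitivity transparent, while a structural argument via Goursat would not.
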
 
\begin{proof}
It is easy to see that $\pi(\Gamma_q) \subseteq \prod_{p | q} \Gamma_p$. To see that the two are in fact equal, suppose we have elements $\gamma_p \in \Gamma_p$. For some $N$, we may write $\gamma_p = g^{-(r_{1,p})} g^{(s_{1,p})} \cdots g^{-(r_{N,p})} g^{(s_{N,p})}$ for elements $r_{i,p}, s_{i,p} \in (\Z/p\Z)^*$, that is to say as a word consisting of a product of $N$ of the generators. Note that we can use the same $N$ for each $p$ by padding with exponents $r_{i,p} = s_{i,p}$, if necessary, each of which contributes the identity, and we have also taken advantage of the fact that $(g^{-(r)} g^{(s)})^{-1} = g^{-(s)} g^{(r)}$, which means we do not need to worry about including inverses separately. By the Chinese remainder theorem there are $r_i, s_i \in (\Z/q\Z)^*$ such that $r_i \md{p} = r_{i,p}$, and similarly for the $s_i$. Setting $\gamma := g^{-(r_1)} g^{(s_1)} \cdots g^{(-r_N)} g^{(s_N)} \in \Sp_8(\Z/q\Z)$, we see that $\pi(\gamma) = (\gamma_p)_{p | q}$, as desired.
\end{proof}

\emph{Remark.} We caution that this lemma is a rather specific result. If, for example, we defined $\tilde\Gamma_q$ to be the group generated by the $g^{(r)}$, $r \in (\Z/q\Z)^*$, the same argument would not work (consider, for example, the question of how to find an element whose projection to $\Sp_8(\Z/p\Z)$ is $g$, and whose projection to $\Sp_8(\Z/p'\Z)$ is $g^{-1}$).\vspace*{8pt}

With these facts in hand, we may now complete the proof. Let $m$, a power of two, be as in \eqref{one-p}. Then 

\[ \Vert \mu_q^{(m)}\Vert_{\infty} = \prod_{p | q} \Vert \mu_{p}^{(m)}\Vert_{\infty} \leq \prod_{p | q}(1 + O(p^{-2}))|\Gamma_{p}|^{-1} \ll \prod_{p| q} |\Gamma_{p}|^{-1} =|\Gamma_q|^{-1}.\]
This bound, coupled with the fact that $\Supp(\mu^{(m)}_q) \subset \Gamma_q$, implies Proposition \ref{subgroup-spread}.\vspace*{10pt}

\section{Identifying \texorpdfstring{$\Gamma_q$}{Gammaq}}\label{sec10}

We turn now to the task of proving Proposition \ref{quasirandom-gamma}. The first stage is to actually identify the group $\Gamma_q$ that is to say the subgroup of $\Sp_8(\Z/q\Z)$ generated by the $g^{-(r)} g^{(s)}$, $r,s \in (\Z/q\Z)^*$, in an explicit algebraic form. Recall that the symplectic element $g$ is a particular element of $\Sp_8(\Q)$ associated to the quadratic form $Q$ (see Definition \ref{symplectic-element}). In view of Lemma \ref{gam-prod}, it is enough to consider the prime case $\Gamma_p$. 

To understand this group, it seems best to think of $\Sp_8(\Z/p\Z)$ as embedded in the full group $\GL_8(\Z/p\Z)$ (in the obvious way). It then turns out that $\Gamma_p$ is a conjugate (in $\GL_8(\Z/p\Z)$) of the group $\SL_2(\F_p[\Delta]) \leq \GL_8(\Z/p\Z)$, where (recall) $\Delta = 4b^{-1} a b^{-T} c - I$ is the matrix discriminant of our form $Q$. We will discuss such groups at much greater length in due course. For now, note that $\F_p[\Delta]$ is an algebra over $\F_p$ of dimension at most $4$ (by the Cayley-Hamilton theorem). Generically, this algebra will have dimension exactly $4$ and in this case $\SL_2(\F_p[\Delta])$ (and hence $\Gamma_p$) will be a group of of size $\sim p^{12}$. 

The precise structure of $\SL_2(\F_p[\Delta])$ will depend on how the characteristic polynomial $\rho_{\Delta}$ for $\Delta$ splits over $\F_p$, but it will be a direct product of some groups $\SL_2(\F_{p^n})$ for various $n$.

Suppose henceforth that $Q$ is generic, and that $p$ is large enough that $a, b, \Delta$ are invertible over $\F_p$. In order to examine the elements $g^{-(r)} g^{(s)}$ (which, by definition, generate $\Gamma_p$), one eventually hits upon the idea of looking at what might be called a ``block DUL factorisation'' of $g$. Namely, there is an invertible $A$ and symmetric $B,C$ (all 4-by-4 matrices) such that 
\begin{equation}\label{dul-fact} g = \pmat{A}{0}{0}{A^{-T}} \pmat{I}{B}{0}{I} \pmat{I}{0}{C}{I} .\end{equation} To see that such a factorisation exists is simple: writing $g = \smallpmat{P}{Q}{R}{S}$ where $P = -2b^{-T} c$, $Q = b^{-T}$, $R = 4ab^{-T} c - b$, $S = -2a b^{-T}$, we can take \begin{equation}\label{abc-def} A = S^{-T}, \; B = S^T Q, \; C =  S^{-1} R.\end{equation} 
Note that $S$ is well-defined and invertible (over $\Q$ and over $\F_p$) since both $a$ and $b$ are invertible.

The matrices $B$ and $C$ here will be somewhat important in their own right. We calculate
\begin{equation} \label{BC} B = -2b^{-1} a b^{-T}, \quad C = -2c + \frac{1}{2} b^T a^{-1} b, \quad BC = \Delta.\end{equation} We note that the appearance of $\Delta$ here is the reason for its definition. Since $\Delta$ is assumed invertible over $\F_p$, both $B$ and $C$ are invertible over $\F_p$.

The purpose of looking at the DUL factorisation \eqref{dul-fact} is that it renders the action of dilation easy to understand. Indeed, 
\begin{equation}\label{grs} g^{-(r)}g^{(s)} = \pmat{I}{0}{-rC}{I} \pmat{I}{-r^{-1} B}{0}{I} \pmat{I}{s^{-1} B}{0}{I} \pmat{I}{0}{sC}{I}.\end{equation}
Now set \begin{equation}\label{tau-def}\tau := \pmat{I}{0}{0}{B} \in \GL_8(\Q).\end{equation} This is invertible over $\F_p$ and so may considered as an element of $\GL_8(\Z/p\Z)$. Now observe that for $\lambda \in \F_p^*$ we have
\begin{equation}\label{ul-conjs} \pmat{I}{\lambda B}{0}{I} = \tau^{-1} \pmat{I}{\lambda I}{0}{I} \tau, \quad \pmat{I}{0}{\lambda C}{I} = \tau^{-1} \pmat{I}{0}{\lambda \Delta}{I} \tau.\end{equation}
It follows from this and \eqref{grs} that $g^{-(r)} g^{(s)}$ takes values in the subgroup $\tau^{-1} \SL_2(\F_p[\Delta]) \tau \leq \Sp_8(\Z/p\Z)$.

\emph{Remark.} It is important to note that neither $\tau$ nor $\SL_2(\F_p[\Delta])$ are contained in $\Sp_8(\Z/p\Z)$ in general, although of course the conjugate $\tau^{-1} \SL_2(\F_p[\Delta]) \tau$ is. It turns out that (assuming $Q$ is generic, and for sufficiently large $p$) this is the group $\Gamma_p$. This is the first key result of the section.

\begin{proposition}\label{gen-prop}
Suppose that $Q$ is generic and that $p$ is sufficiently large in terms of $Q$. Then $\Gamma_p = \tau^{-1} \SL_2(\F_p[\Delta]) \tau$.
\end{proposition}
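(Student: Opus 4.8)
The inclusion $\Gamma_p\subseteq\tau^{-1}\SL_2(\F_p[\Delta])\tau$ is already in hand from \eqref{grs}--\eqref{ul-conjs}: conjugating by $\tau$ carries the generator $g^{-(r)}g^{(s)}$ to $\gamma_{r,s}:=L(-r\Delta)\,U(s^{-1}-r^{-1})\,L(s\Delta)$, where $L(\xi)=\smallpmat{I}{0}{\xi}{I}$ and $U(\xi)=\smallpmat{I}{\xi}{0}{I}$ are the unipotents of $\SL_2$ over the ring $\mathcal R:=\F_p[\Delta]$, $\Delta$ denotes its distinguished generator, and $s^{-1}-r^{-1}$ is read as a scalar in $\F_p\subseteq\mathcal R$. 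So the whole task is the reverse inclusion: that the $\gamma_{r,s}$ with $r,s\in\F_p^{*}$ generate all of $\SL_2(\mathcal R)$.

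Since $Q$ is generic and $p$ is large, $\Delta$ is invertible over $\F_p$ and $\rho_\Delta$ (reduced mod $p$) is separable, so $\mathcal R$ is an \'etale $\F_p$-algebra; factoring $\rho_\Delta=\prod_i f_i$ into distinct monic irreducibles of degrees $d_i$ (with $\sum_i d_i=4$), the Chinese Remainder Theorem gives $\mathcal R\cong\prod_i\F_{p^{d_i}}$ and $\SL_2(\mathcal R)\cong\prod_i\SL_2(\F_{p^{d_i}})$. Under the $i$-th projection $\Delta\mapsto\lambda_i$, a root of $f_i$, which therefore generates $\F_{p^{d_i}}$ over $\F_p$, and $\gamma_{r,s}\mapsto\gamma^{(i)}_{r,s}:=L(-r\lambda_i)\,U(s^{-1}-r^{-1})\,L(s\lambda_i)$. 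The plan is (A) to show each projection of $\tau\Gamma_p\tau^{-1}$ onto $\SL_2(\F_{p^{d_i}})$ is surjective, and then (B) to upgrade this to surjectivity onto the whole product using the lemmas of Goursat and Ribet.

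For (A) one argues, by an explicit computation in $\SL_2(\F_{p^{d_i}})$ supported by the structural facts in Appendix~\ref{sl2-app}, that the $\gamma^{(i)}_{r,s}$ lie in no proper subgroup once $p$ is large. The relevant inputs are: the composition identity $\gamma^{(i)}_{r,s}\gamma^{(i)}_{s,t}=\gamma^{(i)}_{r,t}$, which together with the outer $L(\cdot\,\lambda_i)$ factors and the fact that $\lambda_i$ generates the residue field lets one build the pure unipotent subgroups $U(\F_{p^{d_i}})$ and $L(\F_{p^{d_i}})$ inside the generated group; the fact that $\{\,s^{-1}-r^{-1}:r,s\in\F_p^{*}\}=\F_p$; the trace formula $\operatorname{tr}\gamma^{(i)}_{r,s}=2-\lambda_i(r-s)^2/(rs)$, which shows the generators include regular semisimple elements and helps exclude the finitely many exceptional maximal subgroups; and genericity (in particular $\lambda_i\notin\{0,-1\}$), which rules out a common eigenline and hence containment in a Borel. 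Since $\SL_2(\F_{p^{d_i}})$ is perfect for $p$ large and essentially pinned down by its maximal subgroups, this forces surjectivity.

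For (B), Goursat's lemma describes a subdirect subgroup of $\prod_i\SL_2(\F_{p^{d_i}})$ as a fibre product over identified quotients, and Ribet's lemma shows that factors of distinct degree contribute no common simple quotient for $p$ large (their $\PSL_2$'s have different orders), hence are automatically decoupled. The only remaining danger is a twisted-diagonal identification between two factors of equal degree — which can arise, since $\rho_\Delta$ may split with factor-degree pattern $1+1+1+1$, $1+1+2$ or $2+2$. Here genericity is decisive: such an identification is induced by an automorphism of $\SL_2(\F_{p^{d}})$, i.e.\ $\PGL_2(\F_{p^d})$-conjugation possibly composed with a field automorphism; the former preserves traces up to sign, while the latter would additionally move $\lambda_i$ by Galois. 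But the trace functions $(r,s)\mapsto 2-\lambda_i(r-s)^2/(rs)$ for the two factors involve the \emph{distinct} eigenvalues $\lambda_i$, are non-constant in $(r,s)$ (since $(r-s)^2/(rs)=r/s+s/r-2$ takes $\gg p$ values), and are pairwise neither equal nor negatives of one another for $p$ large; and a Galois twist would force one $\lambda_i$ to be a conjugate of another, i.e.\ $f_i=f_j$, contrary to the $f_i$ being distinct. So all pairwise identifications are excluded, a Goursat induction then gives the full direct product, and conjugating back yields $\Gamma_p=\tau^{-1}\SL_2(\F_p[\Delta])\tau$. I expect the main obstacle to be step (A) — the hands-on production of the full unipotent subgroups and the exclusion of the small exceptional subgroups — together with the twisted-diagonal bookkeeping in (B); both are precisely where the ``four distinct (nonzero) eigenvalues'' hypothesis is genuinely used.
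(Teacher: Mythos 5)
Your high-level plan — split $\F_p[\Delta]\cong\prod_i\F_{p^{d_i}}$ by CRT, prove surjectivity of each factor projection, then upgrade to the full product via Goursat and Ribet — is exactly the paper's strategy (see Lemma~\ref{generate-scalar} and the proof of Proposition~\ref{gen-prop}). In step (B), your trace-based exclusion of twisted-diagonal subgroups is a valid and arguably cleaner variant of the paper's argument, which instead compares the top-left entry of $PM_{\sigma(\alpha_1)}(r,s)$ against $\pm M_{\alpha_2}(r,s)P$ and appeals to Lemma~\ref{nullstellensatz}; since trace is conjugation-invariant you can bypass the unknown conjugator $P$. The same observation makes your exclusion of subfield subgroups in (A) genuinely simpler than the paper's: $\operatorname{tr} M_\theta(r,s)=2-\theta(r-s)^2/(rs)$ lies in a subfield $k_0$ with $(r-s)^2/(rs)\in\F_p^\times\subseteq k_0^\times$ only if $\theta\in k_0$, and no conjugation bookkeeping is needed.

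Step (A) as written, however, has a real gap. The claim that the composition identity $\gamma^{(i)}_{r,s}\gamma^{(i)}_{s,t}=\gamma^{(i)}_{r,t}$ together with the ``outer $L(\cdot\,\lambda_i)$ factors'' lets one \emph{build} the pure unipotent subgroups $U(\F_{p^{d_i}})$, $L(\F_{p^{d_i}})$ inside the generated group is not substantiated and in fact cuts the wrong way: the identity (together with $M_\theta(r,s)^{-1}=M_\theta(s,r)$) keeps you inside the set $\{M_\theta(r,s)\}$, i.e.\ the group is generated by $M_\theta(1,s)$, $s\in\F_p^*$, and products of these do \emph{not} simplify via the composition law (the inner indices do not match). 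There is no evident route to extracting a pure $U(\xi)$ or $L(\xi)$ without already knowing the generation statement; and if such an extraction existed, you would not need the maximal-subgroup classification at all. Taking the maximal-subgroup route instead — which is what Lemma~\ref{generate-scalar} does — you must handle all cases. Your sketch rules out Borels by the common-eigenline computation (correct: the eigenline quadratic $z^2-\theta(1+s)z+s\theta(1+\theta)=0$ has varying roots precisely because $\theta\notin\{0,-1\}$), subfields by the trace, and waves at the exceptional subgroups, but omits the dihedral torus normalizers entirely. Moreover, if you use the coarse version of the classification that the paper cites (Proposition~\ref{sl2-facts}(3): a subgroup of \emph{bounded index} conjugate into a Borel or a subfield copy), you must also supply the pigeonhole step appearing at the start of the proof of Lemma~\ref{generate-scalar}, which places $M_\theta(r,s)$ for a positive proportion of $(r,s)$ in a single conjugate before the Schwartz--Zippel-type argument takes over. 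Your ideas are compatible with that framework, but without the pigeonhole and without the dihedral case, (A) is an outline rather than a proof.
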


Before turning to the proof, we assemble some lemmas. We will need the following two facts about polynomials. 

\begin{lemma}\label{nullstellensatz}
Let $\eta > 0$. Suppose that $F \in \overline{\F}_p[X,Y]$  has total degree $D$ and that $p > 2D/\eta$. Then
\begin{enumerate}
\item If $F(x,y) = 0$ for at least a proportion $\eta$ of all pairs $(x, y) \in \F_p \times \F_p$, then $F$ is identically zero;
\item If $F(x,y)$ takes values in some subfield $k_0 < \overline{\F}_p$, for at least a proportion $\eta$ of all pairs $(x, y) \in \F_p \times \F_p$, then all the coefficients of $F$ lie in $k_0$. 
\end{enumerate}
\end{lemma}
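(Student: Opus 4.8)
The plan is to establish part (1) by an elementary Schwartz--Zippel-type zero count, and then to deduce part (2) from part (1) by a short Galois-descent argument.

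For part (1) I would use the standard fact that a nonzero polynomial $F \in \overline{\F}_p[X,Y]$ of total degree $D$ has at most $2Dp$ zeros in $\F_p \times \F_p$. One sees this crudely by writing $F = \sum_j a_j(X) Y^j$ with each $\deg a_j \leq D$: some $a_{j_0}$ is nonzero and hence vanishes at $\leq D$ values of $x$, and for each such ``bad'' $x$ there are at most $p$ pairs $(x,y)$ with $F(x,y) = 0$, while for each remaining $x$ the polynomial $F(x, Y)$ is nonzero of degree $\leq D$ and so contributes at most $D$ zeros; in total at most $Dp + pD = 2Dp$. (Schwartz--Zippel gives the sharper bound $Dp$, but $2Dp$ is more than enough.) If $F$ vanishes on at least $\eta p^2$ pairs then $\eta p^2 \leq 2Dp$, i.e.\ $p \leq 2D/\eta$, contradicting the hypothesis $p > 2D/\eta$; hence $F \equiv 0$.

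For part (2), let $L$ be the subfield of $\overline{\F}_p$ generated over $\F_p$ by the (finitely many) coefficients of $F$, say $L = \F_{p^m}$. Since $x, y$ range over $\F_p \subseteq L$, the value $F(x,y)$ always lies in $L$, so the hypothesis of (2) in fact yields $F(x,y) \in k_1 := k_0 \cap L$ for at least $\eta p^2$ pairs; here $k_1 = \F_{p^d}$ for some $d \mid m$, using that $\F_p \subseteq k_0$ since $k_0$ is a subfield of $\overline{\F}_p$. Let $\sigma$ be the Frobenius $z \mapsto z^{p^d}$, a generator of $\operatorname{Gal}(L/k_1)$ that fixes $\F_p$ pointwise, and let $F^{\sigma}$ denote the polynomial obtained by applying $\sigma$ to each coefficient of $F$. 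For $x, y \in \F_p$ one has $F^{\sigma}(x,y) = \sigma(F(x,y))$, so $(F^{\sigma} - F)(x,y) = 0$ for each of the (at least $\eta p^2$) pairs with $F(x,y) \in k_1$. Since $F^{\sigma} - F$ has total degree $\leq D$, part (1) forces $F^{\sigma} = F$, i.e.\ $\sigma$ fixes every coefficient of $F$; as $\sigma$ generates $\operatorname{Gal}(L/k_1)$, all coefficients of $F$ lie in $k_1 \subseteq k_0$, as required.

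I do not anticipate any serious obstacle: the lemma is elementary, and the only point requiring care is the field bookkeeping in part (2) --- namely replacing $k_0$ by the finite field $k_1 = k_0 \cap L$ (legitimate precisely because $F(x,y)$ automatically lies in $L$), so that one may argue with an honest Frobenius generator of the finite cyclic group $\operatorname{Gal}(L/k_1)$, together with the elementary observation that $\sigma$ fixes the arguments $x, y \in \F_p$ and hence that applying $\sigma$ to the coefficients of $F$ matches applying it to the value $F(x,y)$.
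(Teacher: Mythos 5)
Your part (1) is essentially the paper's argument: both are Schwartz--Zippel-type zero counts, and your cruder bound $2Dp$ (obtained by splitting off a nonvanishing coefficient polynomial $a_{j_0}$) is just as adequate as the sharp bound $Dp$ the paper quotes, since either way $p > 2D/\eta$ gives the contradiction.

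Your part (2), however, takes a genuinely different route. The paper proves a one-variable lemma by Lagrange interpolation (a degree-$D$ polynomial taking values in $k_0$ at $D+1$ points of $\F_p$ has all coefficients in $k_0$), then runs a pigeonhole argument to find a set $A \subset \F_p$ of size $\geq \eta p/2$ on which each $f_i(x)$ lies in $k_0$ for every $i$, and finally applies the one-variable lemma a second time to each $f_i$. You instead reduce (2) directly to (1) by Galois descent: replace $k_0$ by the finite field $k_1 = k_0 \cap L$ where $L$ is the field generated by the coefficients of $F$, observe that the Frobenius $\sigma$ generating $\operatorname{Gal}(L/k_1)$ fixes $\F_p$ pointwise so that $(F^\sigma - F)(x,y) = 0$ on the given set of pairs, and invoke (1) to conclude $F^\sigma = F$, i.e., all coefficients are $\sigma$-fixed and hence in $k_1 \subseteq k_0$. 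Both arguments are correct and use the hypothesis $p > 2D/\eta$ in an essential way (the paper needs $\eta p/2 > D$ for the interpolation step; you need it in the application of (1)). Your approach is arguably cleaner in that it avoids the separate one-variable lemma and the pigeonhole bookkeeping, at the small cost of the field-theoretic observations you correctly supply (that $\F_p \subseteq k_0$ automatically, and that one may replace $k_0$ by the finite intersection $k_0 \cap L$ so as to have an honest cyclic Galois group). The paper's approach, by contrast, makes no use of Galois theory and generalises more transparently to more variables by induction, which the paper itself remarks is morally the same as the standard proof of Schwartz--Zippel.
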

\begin{proof} (i) This is an immediate consequence of the Schwartz-Zippel lemma, which states that if $F$ is not identically zero then the number of solutions to $F(x,y) = 0$ with $x, y \in \F_p$ is at most $Dp$, which is less than $\eta p^2$ under the assumptions of the lemma.

(ii) We begin with a 1-variable version. If $f(x)$ is a polynomial of degree $D$ which takes values in $k_0$ for $D + 1$ different values of $x \in \F_p$ then it follows from Lagrange interpolation that all the coefficients of $f$ lie in $k_0$. 

Turning to the 2-variable statement we actually want, write $F(X) = \sum_{i=0}^D f_i(X) Y^i$, where $\deg f_i \leq D$. Let $S \subset \F_p \times \F_p$ be the set of pairs $(x,y)$ for which $F(x, y) \in k_0$. For each $x$, let $S_x := \{y : (x,y) \in S\}$. Then there is a set $A \subset \F_p$, $|A| \geq \eta p/2$, such that $|S_x| \geq \eta p/2$ for all $x \in A$. If $x \in A$, the 1-variable result implies (since $p > 2D/\eta$) that all the $f_i(x)$ lie in $k_0$. A second application of the 1-variable result then implies that all the coefficients of each $f_i$ lie in $k_0$. 

We remark that the proof technique for (ii) can also be used for (i) (in fact this is essentially the usual proof of Schwartz-Zippel by induction).
\end{proof}

We will also need a couple of lemmas about subgroups of direct products. Both may be found in \cite[Chapter 1]{serre}. The first result is well-known.

\begin{lemma}[Goursat's lemma]\label{goursat}
Let $G_1, G_2$ be groups. Consider the direct product $G_1 \times G_2$ and let $\pi_i : G_1 \times G_2 \rightarrow G_i$ be the two projection maps. Let $H \leq G_1 \times G_2$ be a subgroup, and suppose that $\pi_i(H) = G_i$ for $i = 1,2$. Then there are normal subgroups $N_i \lhd G_i$ and an isomorphism $\phi : G_1/N_1 \rightarrow G_2/N_2$ such that $H$ has the form $\{(g_1, g_2) \in G_1 \times G_2 : \phi(\overline{g}_1) = \overline{g}_2\}$, where $\overline{g}_i$ is the image of $g_i$ in $G_i/N_i$.
\end{lemma}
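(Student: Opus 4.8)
The plan is to run the classical argument, building everything out of $H$ itself. Set
\[ N_1 := \{ g_1 \in G_1 : (g_1, e) \in H\}, \qquad N_2 := \{g_2 \in G_2 : (e, g_2) \in H\},\]
which are subgroups of $G_1$, $G_2$ respectively (being the images under $\pi_1$, $\pi_2$ of the subgroups $H \cap (G_1 \times \{e\})$ and $H \cap (\{e\} \times G_2)$ of $H$). The three things to establish are then: that $N_i$ is normal in $G_i$; that the assignment $g_1 \mapsto g_2 N_2$, for any $g_2$ with $(g_1,g_2) \in H$, is a well-defined isomorphism $\phi : G_1/N_1 \to G_2/N_2$; and that $H = \{(g_1,g_2) : \phi(\overline{g}_1) = \overline{g}_2\}$.

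For normality of $N_1$ (the case of $N_2$ being symmetric), take $g_1 \in N_1$ and an arbitrary $x \in G_1$. Because $\pi_1(H) = G_1$, there is $y \in G_2$ with $(x,y) \in H$, and then $(x,y)(g_1,e)(x,y)^{-1} = (xg_1x^{-1}, e) \in H$, so $xg_1x^{-1} \in N_1$. This is precisely the point where the surjectivity hypothesis is used. Next, define $\psi : G_1 \to G_2/N_2$ by choosing, for each $g_1$, some $g_2$ with $(g_1,g_2)\in H$ (possible since $\pi_1(H)=G_1$) and setting $\psi(g_1) := g_2 N_2$; this is independent of the choice of $g_2$, since if $(g_1,g_2),(g_1,g_2') \in H$ then $(e, g_2^{-1}g_2') \in H$, so $g_2^{-1}g_2' \in N_2$. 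One checks immediately from multiplicativity of $H$ that $\psi$ is a homomorphism; it is surjective because $\pi_2(H) = G_2$; and its kernel is exactly $N_1$, since $\psi(g_1)$ trivial means $(g_1, g_2) \in H$ for some $g_2 \in N_2$, whence $(e,g_2) \in H$ and $(g_1, e) = (g_1,g_2)(e,g_2)^{-1} \in H$, while the reverse inclusion is clear. Thus $\psi$ factors through an isomorphism $\phi : G_1/N_1 \to G_2/N_2$.

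It remains to identify $H$. If $(g_1,g_2) \in H$ then by construction $\phi(\overline{g}_1) = \psi(g_1) = g_2 N_2 = \overline{g}_2$. Conversely, given $g_1, g_2$ with $\phi(\overline{g}_1) = \overline{g}_2$, choose $g_2'$ with $(g_1, g_2') \in H$; then $g_2' N_2 = \psi(g_1) = g_2 N_2$, so $g_2'^{-1}g_2 \in N_2$, hence $(e, g_2'^{-1} g_2) \in H$, and therefore $(g_1,g_2) = (g_1,g_2')(e, g_2'^{-1}g_2) \in H$. This gives the stated form of $H$. I do not expect any genuine obstacle: the content is entirely formal, and the only points demanding care are invoking $\pi_i(H) = G_i$ at the two places where it is actually needed (normality of the $N_i$, and both well-definedness and surjectivity of $\psi$), and using that $N_2$ is normal so that left and right cosets of $N_2$ coincide.
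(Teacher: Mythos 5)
Your proof is correct and is the standard classical argument for Goursat's lemma; the paper does not give its own proof but simply cites \cite[Proposition 1.6]{serre}, where this same argument appears. You have correctly identified both places where the surjectivity hypothesis is needed (normality of $N_i$, and well-definedness and surjectivity of $\psi$), and the identification of $H$ with the fibred product is handled cleanly.
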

\begin{proof}
See, for example, \cite[Proposition 1.6]{serre}.
\end{proof}

The second result is somewhat less well-known and is called Ribet's lemma by Serre \cite{serre}. 

\begin{lemma}[Ribet's Lemma]\label{product-perfect}
Let $G_1,\dots, G_n$ be perfect groups, that is to say equal to their own commutator subgroups. Let $H \leq G_1 \times \cdots \times G_n$ be such that the projection $\pi_{ij}$ of $H$ to $G_i \times G_j$ is surjective for every pair $(i,j)$. Then $H$ is the whole of $G_1 \times \cdots \times G_n$.
\end{lemma}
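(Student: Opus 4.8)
The plan is to prove Lemma \ref{product-perfect} by induction on $n$, the case $n=1$ being vacuous and the case $n=2$ being exactly the hypothesis that $\pi_{12}$ is surjective. For the inductive step, assume $n\geq 3$, write $K:=G_2\times\cdots\times G_n$, and regard $H$ as a subgroup of $G_1\times K$.

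The first step is to show $\pi_K(H)=K$. Indeed, for $2\leq i<j\leq n$ the projection of $H$ onto $G_i\times G_j$ factors through $\pi_K$, so $\pi_K(H)$ is a subgroup of $K$ all of whose pairwise projections are surjective; the inductive hypothesis, applied to the perfect groups $G_2,\dots,G_n$, then gives $\pi_K(H)=K$. Since $\pi_{12}$ is surjective we also have $\pi_1(H)=G_1$. Now apply Goursat's lemma (Lemma \ref{goursat}) to $H\leq G_1\times K$: there are normal subgroups $N_1\lhd G_1$, $N_K\lhd K$ and an isomorphism $\phi:G_1/N_1\to K/N_K$ with $H=\{(a,b):\phi(aN_1)=bN_K\}$. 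It then suffices to show $N_K=K$, for then $G_1/N_1$ is trivial, so $N_1=G_1$, the Goursat condition becomes vacuous, and $H=G_1\times K$.

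To prove $N_K=K$ I would first read off two facts from the Goursat normal form. Taking $a=e$ forces $b\in N_K$, so $H\cap(\{e\}\times K)=\{e\}\times N_K$; and for each $j\geq 2$, surjectivity of the projection of $H$ onto $G_1\times G_j$ shows that every element of $G_j$ arises as $\pi_j(b)$ for some $(e,b)\in H$, i.e. $\pi_j(N_K)=G_j$. The conclusion then follows from the following sub-claim: a normal subgroup $N$ of a product of perfect groups which projects onto each factor is the whole product. This is a one-line commutator computation: identifying the $j$-th factor with $G_j$, if $g\in G_j$ and $n\in N$ then $[g,n]\in N$ (normality) is supported on the $j$-th coordinate, where it equals $[g,\pi_j(n)]$; since $\pi_j(N)=G_j$ and $G_j=[G_j,G_j]$, letting $g$ and $\pi_j(n)$ range shows the whole $j$-th factor lies in $N$, and the factors generate the product. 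Applying this to $N_K\lhd K$ gives $N_K=K$, completing the induction.

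I do not expect a serious obstacle: the only real content is the commutator sub-claim, and the remainder is bookkeeping with Goursat's lemma, with some care needed to track the pairwise-surjectivity hypotheses through the Goursat normal form. The essential use of perfectness occurs at precisely the step $[G_j,G_j]=G_j$ in the sub-claim, and it cannot be dropped — for instance $\{(a,b,c)\in(\Z/2\Z)^3:a+b+c=0\}$ has all three pairwise projections onto $(\Z/2\Z)^2$ yet is a proper (index $2$) subgroup.
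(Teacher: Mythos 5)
The paper does not give its own proof of Ribet's Lemma; it simply cites Serre \cite[Proposition 1.8]{serre}. Your proposal therefore supplies a proof where the paper defers to a reference.

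Your argument is correct. The induction on $n$ with base case $n=2$, the application of Goursat's lemma to $H \leq G_1 \times K$ (after verifying $\pi_1(H)=G_1$ and $\pi_K(H)=K$ via the inductive hypothesis), the identification $H \cap (\{e\}\times K) = \{e\}\times N_K$, and the deduction $\pi_j(N_K)=G_j$ from surjectivity of $\pi_{1j}$ all hold. The commutator sub-claim is also sound: for $g \in G_j$ (embedded in coordinate $j$) and $m\in N_K$ with $\pi_j(m)=h$, normality gives $[g,m]\in N_K$, this element is supported in coordinate $j$ where it equals $[g,h]$, and letting $g,h$ range gives all commutators in $G_j$, hence a generating set of $[G_j,G_j]=G_j$; so $N_K$ contains each factor and hence is all of $K$, forcing $N_1=G_1$ and $H=G_1\times K$. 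This is essentially the standard Goursat-plus-induction proof (and, I believe, close to the one in Serre's book). Two cosmetic remarks: the statement is genuinely false for $n=1$ (the hypothesis is vacuous but the conclusion fails for proper $H\leq G_1$), so it is better to say the lemma implicitly assumes $n\geq 2$ rather than that the $n=1$ case is ``vacuous''; and perfectness is used only in the sub-claim, exactly as you flag, with your $(\Z/2\Z)^3$ example correctly showing it cannot be dropped.
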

\begin{proof}
See \cite[Proposition 1.8]{serre}.
\end{proof}

The next lemma, which looks a little \emph{ad hoc}, is in some sense the scalar version of Proposition \ref{gen-prop}, and is the heart of the proof of it.

\begin{lemma}\label{generate-scalar}
Suppose that $p \geq 5$ and that $\theta \in \overline{\F}_p \setminus \{0, -1\}$. Then the matrices 
\[ M_{\theta}(r,s) =  \pmat{1}{0}{-r\theta}{1} \pmat{1}{-r^{-1}}{0}{1}\pmat{1}{s^{-1}}{0}{1}\pmat{1}{0}{s\theta}{1} ,\] as $r,s$ range over $\F_p^*$, generate $\SL_2(\F_p(\theta))$.
\end{lemma}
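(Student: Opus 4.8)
The plan is to show that $\Gamma:=\langle M_\theta(r,s):r,s\in\F_p^*\rangle$ is all of $\SL_2(\F_p(\theta))$ via Dickson's classification of subgroups of $\SL_2$ over a finite field. First I would put the generators in normal form: collapsing the two inner upper–triangular factors gives $M_\theta(r,s)=\smallpmat{1}{0}{-r\theta}{1}\smallpmat{1}{s^{-1}-r^{-1}}{0}{1}\smallpmat{1}{0}{s\theta}{1}$, from which the groupoid identities $M_\theta(r,s)M_\theta(s,t)=M_\theta(r,t)$ and $M_\theta(s,r)=M_\theta(r,s)^{-1}$ are immediate; hence $\Gamma=\langle M_\theta(r,1):r\in\F_p^*\rangle$. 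Since all matrix entries are polynomials in $\theta$ over $\F_p$, we have $\Gamma\le\SL_2(\F_p(\theta))$, and we write $\F_p(\theta)=\F_{p^n}$. Note that for $r\ne 1$ the lower–left entry $\theta(1+\theta)(1-r)$ is non-zero (using $\theta\notin\{0,-1\}$), so $M_\theta(r,1)\ne\pm I$, i.e. its image in $\PSL_2(\F_{p^n})$ is non-trivial.

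Next I would analyse the action on $\P^1(\overline{\F}_p)$. A direct computation shows that the fixed points of $M_\theta(r,1)$ are the roots of $r\theta(1+\theta)z^2+(1+r)\theta z+1=0$; subtracting the equations for two values $r_1\ne r_2$ forces any common fixed point to satisfy $z=0$ or $(1+\theta)z=-1$, and substituting either back into the quadratic gives $\tfrac1{1+\theta}\ne 0$ (this uses $\theta\ne-1$), a contradiction. Thus distinct generators have no common fixed point; in particular $\Gamma$ fixes no point of $\P^1$, so it acts irreducibly, lies in no Borel subgroup, and is not cyclic. If $\overline\Gamma$ (the image of $\Gamma$ in $\PSL_2(\F_{p^n})$) were dihedral — equivalently, if $\Gamma$ were contained in a torus normaliser — it would stabilise a two–element set $\{P,P'\}\subset\P^1$; at most one generator can fix $\{P,P'\}$ pointwise (two such would share all their fixed points), while any generator swapping $P$ and $P'$ would square to the identity in $\PSL_2$ and hence have trace $0$, forcing $\theta=-2/(2-r-r^{-1})\in\F_p$ for that $r$. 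There are at most two such $r$, and for $p=5$ even those are ruled out by $\theta\ne-1$; so too few of the $p-2\ge 3$ generators can be accounted for, a contradiction. Finally, $\overline\Gamma$ cannot be $A_4$, $S_4$ or $A_5$: the traces $\operatorname{tr}M_\theta(r,1)=2+(2-r-r^{-1})\theta$ would all have to lie in the fixed, finite set of traces realised in $2.A_4,2.S_4,2.A_5\subset\SL_2(\F_{p^m})$, which is impossible once $p$ or $n$ is not too small, the finitely many remaining $(p,\theta)$ being disposed of by direct computation.

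By Dickson's theorem the only surviving possibility is that $\overline\Gamma$ is $\PGL_2(\overline{\F}_p)$–conjugate to a group between $\PSL_2(\F_{p^m})$ and $\PGL_2(\F_{p^m})$ for some $m\mid n$. To force $m=n$ I would conjugate $\Gamma$ into $\overline{\F}_p^{\times}\cdot\GL_2(\F_{p^m})$; since $\det\gamma=1$ on $\Gamma$, the scalar part of each $\gamma$ has square in $\F_{p^m}$, whence $\operatorname{tr}(\gamma)^2\in\F_{p^m}$ for all $\gamma\in\Gamma$. Applying this to $M_\theta(r,1)$ for two values of $r$ with $2-r-r^{-1}$ non-zero and distinct (available for $p\ge5$) yields two $\F_p$–linear relations for $(\theta,\theta^2)$ with invertible coefficient matrix and right–hand sides in $\F_{p^m}$, so $\theta\in\F_{p^m}$; hence $\F_{p^n}\subseteq\F_{p^m}$, and with $|\PSL_2(\F_{p^m})|\le|\overline\Gamma|\le|\PSL_2(\F_{p^n})|$ this forces $m=n$ and $\overline\Gamma=\PSL_2(\F_{p^n})$. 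Then $\Gamma\le\SL_2(\F_{p^n})$ surjects onto $\PSL_2(\F_{p^n})$, so $[\SL_2(\F_{p^n}):\Gamma]\le 2$; since $\SL_2(\F_{p^n})$ is perfect for $p^n\ge 5$ it has no subgroup of index $2$, and therefore $\Gamma=\SL_2(\F_{p^n})=\SL_2(\F_p(\theta))$.

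The step I expect to be most delicate is excluding the bounded exceptional subgroups $A_4,S_4,A_5$ uniformly in $p$ and $\theta$: morally one is pitting a family of $\sim p$ generators carrying a rigidly parametrised set of traces against the handful of traces available in those groups, but making this airtight for small $p$ (and small $n$) requires either a little case analysis or a sharper packaging of the trace constraint, and it is precisely here — together with the fixed-point computation and the $p=5$ dihedral case — that the hypotheses $p\ge5$ and $\theta\notin\{0,-1\}$ are used essentially.
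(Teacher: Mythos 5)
Your proposal takes a genuinely different route from the paper. The paper uses the ``rough'' Dickson classification (quoted from Tao's book): every proper subgroup of $\SL_2(k)$ has a subgroup of bounded index conjugate into either a Borel or $\SL_2(k_0)$ for a proper subfield $k_0$. It then pigeonholes $\gtrsim p^2$ of the elements $g^{-(r)}g^{(s)}$ into this smaller conjugate, computes the bottom-left entry of $PM_\theta(r,s)P^{-1}$ as a rational function of $(r,s)$, and applies a two-variable Schwartz--Zippel argument (Lemma~\ref{nullstellensatz}) to force a contradiction using $\theta\notin\{0,-1\}$ in the Borel case, and to force $\theta\in k_0$ in the subfield case. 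The dihedral and exceptional cases never appear explicitly --- they are swallowed by the bounded-index formulation.

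Your approach is to invoke the full Dickson classification and eliminate each case directly. Several of your steps are nice and correct and not in the paper: the groupoid identity $M_\theta(r,s)M_\theta(s,t)=M_\theta(r,t)$ reducing to the one-parameter family $M_\theta(r,1)$; the fixed-point computation $r\theta(1+\theta)z^2+(1+r)\theta z+1=0$ and the clean conclusion that distinct generators share no fixed point (hence no Borel, not cyclic); the dihedral elimination by counting fixers vs.\ trace-zero swappers, including the $p=5$ check; and the trace-squared argument in the subfield case which correctly forces $\theta\in\F_{p^m}$ and hence $m=n$. I verified the lower-left entry $\theta(1+\theta)(1-r)$, the trace formula $2+(2-r-r^{-1})\theta$, and the fixed-point quadratic.

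The genuine gap is the exceptional case $A_4, S_4, A_5$. You correctly identify it as the delicate step, but the gap is more than cosmetic. The trace-counting argument needs roughly $p/2$ distinct values of $c_r=2-r-r^{-1}$ to exceed the size of the trace set of $2.A_4$, $2.S_4$, or $2.A_5$, and this fails badly for small $p$. Concretely, for $p=5$ and $\theta\in\F_5$ there are only two distinct traces among the three generators, and the traces realised by $2.A_4\cong$ a subgroup of $\SL_2(\F_5)$ are $\{0,\pm1,\pm2\}=\F_5$: the criterion has no content whatsoever. (One can still rule out $2.A_4$ for, say, $p=5,\theta=1$ --- e.g.\ the commutators $[M_1(2,1),M_1(3,1)]$ and $[M_1(2,1),M_1(4,1)]$ both have trace $0$ yet their product has trace $-1$, incompatible with both lying in the unique $Q_8\lhd 2.A_4$ --- but this goes well beyond ``looking at traces.'') Moreover your claim that only ``finitely many remaining $(p,\theta)$'' need checking is true but requires justification: one must first argue that the exceptional case forces $\theta\in\F_{p^4}$ (traces of the $2.A_4,2.S_4,2.A_5$ lie in small extensions), bounding $n\le 4$, and then bound $p$ via the trace count; even so the residual set is not small and the verification is not ``a little case analysis.'' As it stands, the lemma is not proved for all $p\ge 5$. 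Either carry out the exceptional-case verification in full, or replace the trace argument with something sharper (constraints on products of generators, not just their individual traces), or --- as the paper does --- use the bounded-index version of Dickson, which folds the bounded exceptional subgroups into the conjugation-and-index-$O(1)$ bookkeeping and avoids treating them separately.
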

\begin{proof}
Set $k := \F_p(\theta)$. We use the fact that any proper subgroup of $\SL_2(k)$ has a subgroup of index $C = O(1)$ which is conjugate to a subgroup of one of 
\begin{enumerate}
\item the group of upper triangular matrices or 
\item $\SL_2(k_0)$ for some proper subfield $k_0 < k$. 
\end{enumerate}
See Appendix \ref{sl2-app} for further comments on this fact.

Suppose, then, that the $M_{\theta}(r,s)$ generate a proper subgroup $\Gamma < \SL_2(\F_p(\theta))$. Let $\Gamma' \leq \Gamma$, $[\Gamma : \Gamma'] \leq C$, be a subgroup conjugate to a group of type (1) or (2) above. By the pigeonhole principle there is some coset $\Gamma' x$ containing the elements $g^{-(r)} g$, $r \in R$, for some set $R \subset \F_p^*$ of size at least $\frac{1}{C}(p-1)$. Then $\Gamma'$ contains the elements $g^{-(r)} g^{(s)} = g^{-(r)} g^{-1} (g^{-(s)} g^{-1})^{-1}$ for all $r, s \in R$, that is to say for more than $\eta p^2$ pairs $(r,s)$, where $\eta = (2C^2)^{-1}$.

Now we may explicitly compute that if $P = \smallpmat{a}{b}{c}{d} \in \SL_2(k)$ (here $a,b,c$ are local to the proof of Lemma \ref{generate-scalar}, not the coefficients of $Q$) then the bottom-left entry of $P M_{\theta}(r,s) P^{-1}$ is
\[ \frac{1}{rs} \big( d^2 \theta (1 + \theta) (s^2 r - r^2 s) + c^2 (s-r) + cd\theta (r^2 - s^2)\big).\]
Consider the bracketed expression as a polynomial in $r$ and $s$. If this is to be zero for at least $\eta p^2$ pairs $(r,s) \in \F_p^* \times \F_p^*$ then the first part of Lemma \ref{nullstellensatz} implies that (if $p$ is sufficiently large) all the coefficients of this polynomial vanish, and so $c^2 = cd \theta = d^2 \theta (1 + \theta) = 0$. We cannot have $c = d = 0$ (since $P$ is invertible) and so, since $\theta \neq 0, -1$, possibility (1) is excluded.

Turning to the subfield case (2), we proceed similarly, but now using the second part of Lemma \ref{nullstellensatz}. This implies that all of $c^2 = cd \theta = d^2 \theta (1 + \theta) $ lie in $k_0$. If $cd \neq 0$, it follows that 
\[ 1 + \frac{1}{\theta} = \frac{ c^2 \cdot d^2 \theta (1 + \theta)}{(cd\theta)^2} \in k_0,\]
and so $\theta \in k_0$. This, however, is impossible since $k = \F_p(\theta)$ and $k_0$ is assumed to be a \emph{proper} subfield of $k$.  

If $c = 0$, we consider additionally the top-left entry of $PM(r,s) P^{-1}$, which (when $c = 0$) is
\[ \frac{1}{r} \big( ad (1 + \theta) r - ad \theta s + bd (rs - r^2) \theta (1 + \theta) \big).\]
Note that $ad = \det P = 1$.  Therefore, by the second part of Lemma \ref{nullstellensatz}, $\theta \in k_0$. This is again a contradiction.

Finally if $d = 0$, the top-left entry of $P M(r,s) P^{-1}$ is
\[ \frac{1}{rs} \big( ac (s - r) - bc (1 + \theta ) rs + bc \theta r^2\big).\]
Note that $bc = \det P = -1$, and so again we get $\theta \in k_0$.
\end{proof}

\begin{proof}[Proof of Proposition \ref{gen-prop}]
We must show that the elements $\gamma(r,s) := \tau g^{-(r)} g^{(s)} \tau^{-1}$ generate $\SL_2(\F_p[\Delta])$. From \eqref{grs}, \eqref{ul-conjs} we have
\begin{equation}\label{mrs-mat} \gamma(r,s) = \pmat{I}{0}{-r\Delta}{I} \pmat{I}{-r^{-1}I}{0}{I}\pmat{I}{s^{-1}I}{0}{I}\pmat{I}{0}{s\Delta}{I}.\end{equation}
This should be compared with the definition of $M_{\theta}(r,s)$ in Lemma \ref{generate-scalar}. 

Now we are assuming that $Q$ is generic which, by definition, means that the characteristic polynomial $\rho_{\Delta}(\lambda) = \det (\Delta - \lambda I)$ has four distinct roots in $\overline{\Q} \setminus \{0, -1\}$. Consequently, it will also be the minimum polynomial of $\Delta$ over $\Q$.

Let $\overline{\rho}_{\Delta} \in \F_p[X]$ be the reduction of $\rho_{\Delta}$ modulo $p$. If $p$ is sufficiently large, $\overline{\rho}_{\Delta}$ will have four distinct roots in $\overline{\F}_p \setminus \{-1,0\}$. (The resultant $\Res(\rho_{\Delta}, \rho'_{\Delta}) \in \Q[X]$ is not the zero polynomial, by assumption, and so it is also not the zero polynomial when reduced mod $p$, for $p$ sufficiently large; also $\rho_{\Delta}(0), \rho_{\Delta}(-1) \neq 0$ in $\Q$, and hence in $\F_p$ for $p$ sufficiently large.) Since it has distinct roots, $\overline{\rho}_{\Delta}$ will be the minimal polynomial of $\Delta$ over $\F_p$.

Henceforth, suppose that $p \geq p_0(Q)$. Suppose that the factorisation of $\overline{\rho}_{\Delta}$ into irreducibles polynomials over $\F_p$ is $f_1\cdots f_n$. Then, since $\overline{\rho}_{\Delta}$ has distinct roots in $\overline{\F}_p$, the $f_i$ will be coprime.
For each $i$, let $\alpha_i \in \overline{\F}_p$ be a root of $f_i$ and consider the map
\[\Phi :  \F_p[\Delta] \rightarrow  \prod_{i=1}^n \F_p(\alpha_i)\]
given by
\[ \Phi(F(\Delta)) = (F(\alpha_1),\dots, F(\alpha_n))\] for any $F \in \F_p[X]$. This is a well-defined ring homomorphism: if $F_1(\Delta) = F_2(\Delta)$ then $\overline{\rho}_{\Delta} | F_1 - F_2$ (since $\overline{\rho}_{\Delta}$ is the minimal polyomial of $\Delta$) and hence $f_i | F_1 - F_2$ for each $i$, whence $F_1(\alpha_i) = F_2(\alpha_i)$. We claim that $\Phi$ is injective. If $\Phi(F(\Delta)) = 0$ then for $i = 1,\dots, n$ we have $F(\alpha_i) = 0$ which implies $f_i | F$. Since the $f_i$ are coprime, $\overline{\rho}_{\Delta} | F$ and so $F(\Delta) = 0$.  Both the domain and range of $\Phi$ have size $p^4$ and so it is in fact a ring isomorphism. 

Therefore there are five possible isomorphism types for the ring $\F_p[\Delta]$, namely $\F_{p^4}$ (if $\overline{\rho}_{\Delta}$ is irreducible over $\F_p$), $\F_{p^3} \times \F_p$, $\F_{p^2} \times \F_{p^2}$, $\F_{p^2} \times \F_p \times \F_p$, or $\F_p \times \F_p \times \F_p \times \F_p$ (if $\overline{\rho}_{\Delta}$ splits completely over $\F_p$.) Note that by standard algebraic number theory we can expect all of these possibilities to occur as $p$ varies over primes. 

The map $\Phi$ induces a group isomorphism
\[ \Phi : \SL_2(\F_p[\Delta]) \rightarrow \prod_{i=1}^n \SL_2(\F_p(\alpha_i)).\]
In view of \eqref{mrs-mat}, we have
\[ \Phi(\gamma(r,s)) = (M_{\alpha_i}(r,s))_{i = 1}^n,\] where the $M_{\alpha_i}(r,s)$ are as defined in Lemma\ref{generate-scalar}.
It follows from this and Lemma \ref{generate-scalar} that, if $\Gamma = \langle \gamma(r,s) : r,s \in \F_p^*\rangle = \tau \Gamma_p \tau^{-1}$ is the group generated by the $\gamma(r,s)$, then the projection of $\Phi(\Gamma)$ on to each factor $\SL_2(\F_p(\alpha_i))$ is surjective. When $n = 1$, this is the end of the proof, but we must work a little harder in the other cases.

Let us begin by looking at $H$, the projection of $\Phi(\Gamma)$ to the product $\SL_2(\F_p(\alpha_1)) \times \SL_2(\F_p(\alpha_2))$ of two of the factors (without loss of generality, the first two). Write $\pi_i$, $i = 1,2$ for projection onto each factor. As we have remarked, $\pi_i(H) = \SL_2(\F_p(\alpha_i))$. This allows us to apply Goursat's lemma (Lemma \ref{goursat}). We conclude that there are $N_i \lhd \SL_2(\F_p(\alpha_i))$, and an isomorphism $\phi : \SL_2(\F_p(\alpha_1)) /N_1\rightarrow \SL_2(\F_p(\alpha_2))/N_2$ such that $H = \{(g_1, g_2) \in \SL_2(\F_p(\alpha_1)) \times \SL_2(\F_p(\alpha_2)) : \phi(\overline{g}_1) = \overline{g}_2\}$, where $\overline{g}_i$ denotes reduction mod $N_i$. 

Now the $\SL_2(\F_p(\alpha_i))$ are almost simple: each $N_i$ must be either trivial, $\{ \pm I\}$ or $\SL_2(\F_p(\alpha_i))$. See Appendix \ref{sl2-app}. Moreover, the fact that $\phi$ is an isomorphism, and the fact that $\SL_2(\F_p(\alpha_i))$ is not isomorphic to $\PSL_2(\F_p(\alpha_j))$ (consider cardinalities), means that up to relabelling there are only three essentially different cases, which we consider separately below.

\emph{Case 1.} $N_1 = \SL_2(\F_p(\alpha_1))$. Then $N_2 = \SL_2(\F_p(\alpha_2))$, and $H$ is the whole of the product $\SL_2(\F_p(\alpha_1)) \times \SL_2(\F_p(\alpha_2))$.

\emph{Case 2.} $N_1 = N_2 = \{I\}$. Then $H = \{(x, \phi(x)) : x \in \SL_2(\F_p(\alpha_1))\}$, where $\phi : \SL_2(\F_p(\alpha_1)) \rightarrow \SL_2(\F_p(\alpha_2))$ is some isomorphism. Note that $H$ contains the elements $(M_{\alpha_1}(r,s), M_{\alpha_2}(r,s))$, $r,s \in \F_p^*$, so in this scenario we must have 
\begin{equation}\label{phi-hom} \phi(M_{\alpha_1}(r,s)) = M_{\alpha_2}(r,s)\end{equation} for all $r,s$. By looking at cardinalities, the fields $\F_p(\alpha_1)$ and $\F_p(\alpha_2)$ must be isomorphic, so to ease notation we may suppose that $\alpha_2 \in \F_p(\alpha_1)$. 

Now it is known (see Appendix \ref{sl2-app}) that the automorphism group of $\SL_2(k)$ is generated by conjugation by elements of $\GL_2(k)$ and field automorphisms. Therefore for some $P  \in \GL_2(\F_p(\alpha_1))$ and for some field automorphism $\sigma$ of $\F_p(\alpha_1)$ we have
\[ \phi(M_{\alpha_1}(r,s)) = P M_{\sigma(\alpha_1)}(r,s) P^{-1}.\] Comparing with \eqref{phi-hom} gives
\begin{equation}\label{prs} P M_{\sigma(\alpha_1)}(r,s) = M_{\alpha_2}(r,s)  P\end{equation} for all $r,s$. Writing $P = \smallpmat{a}{b}{c}{d}$ and comparing top left entries gives, writing $\theta = \sigma(\alpha_1)$ and $\theta' = \alpha_2$,
\begin{equation}\label{abc} a (1 + \theta - \frac{s}{r} \theta) + b(s - r) \theta (1 + \theta) = a( 1 + \theta' - \frac{s}{r} \theta') + (\frac{1}{s} - \frac{1}{r}) c\end{equation} for all $r,s \in \F_p^*$. By Lemma \ref{nullstellensatz} (and since $\theta \neq 0, -1$) we have $b = c = 0$, thus $a \neq 0$ and $\theta = \theta'$. That is, $\sigma(\alpha_1) = \alpha_2$, and so $\alpha_1, \alpha_2$ have the same minimal polynomial over $\F_p$. This is a contradiction, since we assumed that the minimal polynomials of $\alpha_1, \alpha_2$ over $\F_p$ (the $f_i$, that is to say the factors of the minimal polynomial of $\Delta$ over $\F_p$) are coprime.

\emph{Case 3.} $N_i = N_j = \{ \pm I\}$. We reprise the argument from Case 2, only now we must allow a sign error. Included in the classification of automorphisms of $\PSL_2(\F_p(\alpha_1))$ (see Appendix \ref{sl2-app}) is the fact that such automorphisms lift to automorphisms of $\SL_2(\F_p(\alpha_1))$. Therefore $H = \{(x, \eps(x)\phi(x)) : x \in \SL_2(\F_p(\alpha_1)), \eps(x) \in \pm I\}$ for some isomorphism $\phi : \SL_2(\F_p(\alpha_1)) \rightarrow \SL_2(\F_p(\alpha_2))$. We may now proceed as before but with an additional sign error, thus \eqref{prs} becomes
\begin{equation}\label{prs-signed} P M_{\sigma(\alpha_1)}(r,s) = \eps_{r,s} M_{\alpha_2}(r,s) P\end{equation} for all $r,s \in \F_p^*$ and for some choice of signs $\eps_{r,s} \in \{\pm 1\}$. If $\eps_{r,s} = 1$ for at least half of all pairs $(r,s) \in \F_p^* \times \F_p^*$ then we are done, exactly as before (taking $\eta = \frac{1}{4}$ in Lemma \ref{nullstellensatz}). If $\eps_{r,s} = -1$ for at least half of all pairs $(r,s) \in \F_p^* \times \F_p^*$ then \eqref{abc} is modified to 
\begin{equation}\label{abc-neg} a (1 + \theta - \frac{s}{r} \theta) + b(s - r) \theta (1 + \theta) = - a( 1 + \theta' - \frac{s}{r} \theta') - (\frac{1}{s} - \frac{1}{r}) c,\end{equation} for half of all pairs $(r,s) \in \F_p^* \times \F_p^*$. From this we conclude that $b = c = 0$, hence $a \neq 0$ and so both $\theta = - \theta'$ and $1 + \theta = -(1 + \theta')$. This is impossible.\vspace*{8pt}

Since only Case 1 in the above analysis did not lead to a contradiction (and since we can replace $\{1,2\}$ by any pair $\{i,j\}$), we have now shown that the projection of $\Phi(\Gamma)$ to the product $\SL_2(\F_p(\alpha_i)) \times \SL_2(\F_p(\alpha_j))$ of any pair of factors is surjective. Proposition \ref{gen-prop} now follows from Lemma \ref{product-perfect}, together with the fact (see Appendix \ref{sl2-app}) that all the factors $\SL_2(\F_p(\alpha_i))$ are perfect.
\end{proof}

\section{Quasirandomness of \texorpdfstring{$\rho|_{\Gamma_q}$}{rhoGammaq}.}\label{sec11}

We turn now to the proof of Proposition \ref{quasirandom-gamma} itself. Let us begin by recalling the statement.

\begin{proposition}[Proposition \ref{quasirandom-gamma}]
Let $Q$ be a generic quadratic form. Then there is some $p_0(Q)$ such that the following is true. Let $\rho$ be the Weil representation on $\Sp_8(\Z/q\Z)$ \textup{(}as given in Proposition \ref{weil-prop}\textup{)}. Suppose that $q$ is squarefree and has all prime factors greater than $p_0(Q)$. Then $\rho|_{\Gamma_q}$ splits into irreducible subrepresentations of dimensions $\geq q^{1 - o(1)}$. 
\end{proposition}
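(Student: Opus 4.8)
The plan is to reduce to $q=p$ prime via tensor products, then use the representation theory of $\SL_2$ over finite fields to reduce the statement to the absence of a nonzero $\Gamma_p$-fixed vector in the Weil representation, and finally to rule out such a vector by a direct computation with the explicit formulas of Proposition \ref{weil-prop}. Concretely, by the tensor-product description of the Weil representation for squarefree $q$ in Appendix \ref{weil-app} together with the isomorphism $\Gamma_q\cong\prod_{p\mid q}\Gamma_p$ of Lemma \ref{gam-prod}, the restriction $\rho|_{\Gamma_q}$ is isomorphic to $\bigotimes_{p\mid q}(\tilde\rho_p|_{\Gamma_p})$, where each $\tilde\rho_p$ is a Weil representation of $\Sp_8(\Z/p\Z)$ possibly precomposed with a dilation automorphism. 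Since dilations preserve $\Gamma_p$ (they send the generator $g^{-(r)}g^{(s)}$ to $g^{-(rt)}g^{(st)}$), the multiset of constituent dimensions of $\tilde\rho_p|_{\Gamma_p}$ coincides with that of $\rho_p|_{\Gamma_p}$. An irreducible of a direct product is an external tensor product of irreducibles of the factors, so it suffices to show that every irreducible constituent of $\rho_p|_{\Gamma_p}$ has dimension $\geq (p-1)/2$; each constituent of $\rho|_{\Gamma_q}$ then has dimension $\geq\prod_{p\mid q}\tfrac{p-1}{2}=q^{1-o(1)}$, using Mertens' theorem and $2^{\omega(q)}=q^{o(1)}$.

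For the prime case, Proposition \ref{gen-prop} identifies $\Gamma_p$ as a $\GL_8(\Z/p\Z)$-conjugate of $\SL_2(\F_p[\Delta])\cong\prod_i\SL_2(\F_{p^{d_i}})$ with $\sum_i d_i=4$. Since every nontrivial irreducible representation of $\SL_2$ of a finite field of odd order $q'\geq 5$ has dimension at least $(q'-1)/2$ (Appendix \ref{sl2-app}), any irreducible constituent of $\rho_p|_{\Gamma_p}$ of dimension $<(p-1)/2$ must be trivial on each factor, hence trivial. Thus it is enough to show that, for generic $Q$ and $p$ sufficiently large, $\rho_p$ admits no nonzero vector fixed by all of $\Gamma_p$.

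Suppose $v\neq 0$ is such a fixed vector. Conjugating the elementary matrices $\smallpmat{I}{T}{0}{I}$, $\smallpmat{I}{0}{T}{I}$ of $\SL_2(\F_p[\Delta])$ by $\tau=\smallpmat{I}{0}{0}{B}$ (cf.\ \eqref{tau-def}--\eqref{ul-conjs}) shows that $\Gamma_p$ contains every lower unipotent $l(W)=\smallpmat{I}{0}{W}{I}$ with $W\in\mathcal V_-:=B^{-1}\F_p[\Delta]$ and every upper unipotent $l'(W):=\smallpmat{I}{W}{0}{I}$ with $W\in\mathcal V_+:=\F_p[\Delta]B$; using $B=B^T$, $C=C^T$ and $BC=\Delta$ one checks that each $B^{-1}\Delta^k$ and each $\Delta^k B$ is symmetric, so these are honest symplectic elements. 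Taking $\xi(l(W))=1$ as permitted by Proposition \ref{weil-prop}, the formula $\rho(l(W))v(x)=e_p(-\tfrac12 x^TWx)v(x)$ forces $\Supp(v)\subseteq Z_-:=\{x\in(\Z/p\Z)^4: x^TWx=0\ \text{for all}\ W\in\mathcal V_-\}$. The key point is that genericity makes $Z_-$ trivial: passing to $\overline{\F}_p$ and choosing $P$ with $P\Delta P^{-1}$ diagonal (distinct entries, since $\Delta$ has distinct eigenvalues), the symmetry of $C=C^T$ forces $PBP^{T}$ to commute with $P\Delta P^{-1}$ and hence to be diagonal as well; in the new coordinates $\mathcal V_-$ becomes the space of \emph{all} diagonal matrices, so $Z_-\otimes\overline{\F}_p=\{y:y_i^2=0\ \forall i\}=\{0\}$ and therefore $Z_-=\{0\}$. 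Thus $v$ is a multiple of $\delta_0$. But taking $W=B\in\mathcal V_+$ (invertible over $\F_p$), the identity $l'(B)=Jl(-B)J^{-1}$ and the Fourier-transform and quadratic-modulation formulas of Proposition \ref{weil-prop} show that $\rho(l'(B))\delta_0$ is a nonzero constant times the Gauss-sum exponential $x\mapsto e_p(-\tfrac12 x^TB^{-1}x)$, which has constant modulus and so is not proportional to $\delta_0$ — a contradiction. Hence $v=0$.

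The main obstacle is this final step, and within it the observation that genericity of $Q$ — the distinct eigenvalues of $\Delta$ together with the (automatic) symmetry of $B$ and $C$ — simultaneously diagonalises $B$, $C$ and $\Delta$ over $\overline{\F}_p$, which is what collapses the zero-locus computation; without it one would be left estimating the common zeros of a net of quadrics. (Should one only manage to show $Z_\pm$ finite rather than trivial, the argument can still be closed using the finite-field uncertainty principle $|\Supp v|\cdot|\Supp\widehat v|\geq p^4$, since $l'(W)=Jl(-W)J^{-1}$ gives $\Supp(\widehat v)\subseteq Z_+$ as well.) Everything else — the tensor-product and twist bookkeeping, the citation of the $\SL_2$ dimension bounds, and the verification of the symmetry of the unipotent generators and of the Weil-representation identities — is routine.
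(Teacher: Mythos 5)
Your argument is correct and is essentially the paper's own proof: reduce to $q=p$ prime via the tensor-product description of $\rho$ and $\Gamma_q\cong\prod_p\Gamma_p$; use the $\SL_2$ minimal-dimension bound to reduce to ruling out a $\Gamma_p$-fixed vector; then, using Proposition \ref{gen-prop} and the explicit unipotent elements of $\Gamma_p$, show genericity of $\Delta$ forces the support of such a vector (and of its Fourier transform) to be trivial. Your zero-locus computation --- simultaneously diagonalising $PBP^T$ and $P\Delta P^{-1}$ so that $\mathcal V_-$ becomes the full space of diagonal forms --- is a repackaging of the paper's self-adjointness/eigenvector-orthogonality argument for the form $\phi(x,y)=x^TCy$; the only place you genuinely diverge is the closing contradiction, where you directly compute $\rho(l'(B))\delta_0$ as a Gauss-sum exponential of constant modulus, whereas the paper symmetrically derives $\Supp(\widehat f)=\{0\}$ and notes the incompatibility (your parenthetical remark about the uncertainty principle is essentially the paper's route). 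Both closings work; you also legitimately shortcut the paper's use of the dilation elements $s(\lambda I)$ by invoking the normalisation $\xi(l(W))=1$ available in the prime case.
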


We begin by reducing to the prime case. Write $q = p_1 \cdots p_n$. The representation $\rho$ is constructed in Appendix \ref{weil-app} as a tensor product $\otimes_{i=1}^n \tilde\rho_i$, where $\tilde\rho_i : \Sp_8(\Z/p_i\Z) \rightarrow \U(\ell^2((\Z/p_i\Z)^4)$ is a twisted version of the mod $p_i$ Weil representation $\rho_i$, given by $\tilde \rho_i (g) = \rho_i(g^{\sigma_i})$ where $\smallpmat{M_1}{M_2}{M_3}{M_4}^{\sigma_i} = \smallpmat{M_1}{\lambda_i M_2}{\lambda_i^{-1} M_3}{M_4}$ and $\lambda_i = \prod_{j \neq i} p_i$.  This tensor product may be realised on $\ell^2((\Z/q\Z)^4)$ by $\rho(g) f = \prod_{i=1}^n \tilde\rho_i(g) f_i(x)$ for ``pure tensors'' $f(x) = \prod_{i=1}^n f_i(x)$, where $f_i$ factors through the projection $\pi_i : (\Z/q\Z)^4 \rightarrow (\Z/p_i\Z)^4$ (see Appendix \ref{weil-app} for a discussion of the notation here). Consider the restriction to $\Gamma_q \cong \prod_{i=1}^n \Gamma_{p_i}$. The decomposition of $\rho|_{\Gamma_q}$ into irreducibles is then given by decomposing each $\ell^2((\Z/p_i\Z)^4)$ into irreducible $\tilde\rho_i|_{\Gamma_{p_i}}$-invariant subspaces $V_i$ and taking tensor products. (Here we use the fact that if $V_i$ is an irreducible $G_i$ representation then $\bigotimes_{i=1}^n V_i$ is an irreducible $\times_{i=1}^n G_i$-representation, which is a standard fact of representation theory. See for instance \cite[Theorem 19.18]{james-liebeck}.)

Now each $\Gamma_{p_i}$ is a direct product of groups $\SL_2(\F_{p_i^j})$, and therefore by Appendix \ref{sl2-app} any irreducible representation of $\Gamma_{p_i}$ is either trivial, or has dimension at least $\frac{1}{2}(p_i-1)$. That is, if $\dim V_i \neq 1$ then $\dim V_i \geq \frac{1}{2}(p_i-1)$. Consequently, if for all $i$ the representation $\tilde\rho_i|_{\Gamma_{p_i}}$ has no invariant vector (that is, 1-dimensional invariant subspace) then $\dim \rho  \geq \prod_{p | q} \frac{1}{2}(p-1) = q^{1 - o(1)}$, as desired. 

This reduces the task of proving Proposition \ref{quasirandom-gamma} to the following, which is the final task for the main part of the paper.

\begin{proposition}\label{key-nontrivial}
Suppose that $Q$ is generic and that $p$ is sufficiently large in terms of $Q$. Let $\rho : \Sp_8(\Z/p\Z)  \rightarrow \U(\ell^2((\Z/p\Z)^4))$ be the mod $p$ Weil representation. Let $r \in (\Z/p\Z)^*$, and let $\tilde \rho$ be the twist of $\rho$ by dilation by $r$, that is to say $\tilde\rho(x) = \rho(x^{(r)})$. Then $\tilde \rho|_{\Gamma_p}$ has no nontrivial invariant vector.\end{proposition}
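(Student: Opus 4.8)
The plan is to use only the dilation and quadratic-modulation clauses of Proposition~\ref{weil-prop}, together with the explicit description of $\Gamma_p$ from Proposition~\ref{gen-prop}; notably, neither the Fourier clause nor the upper-unipotent elements of $\Gamma_p$ will be needed. Write $\mathcal{A} := \F_p[\Delta]$. For $p$ large and $Q$ generic, $\overline{\rho}_\Delta$ has $4$ distinct roots, so $\mathcal{A} \cong \F_p[X]/(\overline{\rho}_\Delta)$ is a product of finite fields (an étale $\F_p$-algebra of dimension $4$), and, since the minimal polynomial of $\Delta$ then equals its characteristic polynomial, $(\Z/p\Z)^4$ is a free rank-one $\mathcal{A}$-module. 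By Proposition~\ref{gen-prop}, $\Gamma_p = \tau^{-1}\SL_2(\mathcal{A})\tau$ with $\tau = \smallpmat{I}{0}{0}{B}$, where $B,C$ are the symmetric invertible matrices of \eqref{BC} with $BC = \Delta$.

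The first step is to locate inside $\Gamma_p$ enough elements acting by simple formulas. Since $\SL_2(\mathcal{A})$ contains every elementary matrix $\smallpmat{I}{0}{a}{I}$ ($a \in \mathcal{A}$) and every diagonal matrix $\smallpmat{E}{0}{0}{E^{-1}}$ ($E \in \mathcal{A}^*$), conjugating by $\tau^{-1}$ and using the relation $B\Delta^T = \Delta B$ — immediate from $BC = \Delta$, $B = B^T$, $C = C^T$, which also gives $B^{-1}\Delta B = \Delta^T$ — one finds that $\Gamma_p$ contains $l(B^{-1}a)$ for every $a \in \mathcal{A}$ (here $B^{-1}a$ is automatically symmetric, since the resulting element lies in $\Sp_8(\Z/p\Z)$) and $s(E)$ for every $E \in \mathcal{A}^*$. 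Since dilation by $r$ sends $l(W)\mapsto l(rW)$ and fixes $s(E)$, the operators $\tilde\rho(l(B^{-1}a)) = \rho(l(rB^{-1}a))$ and $\tilde\rho(s(E)) = \rho(s(E))$ are given by the formulas of Proposition~\ref{weil-prop}, with the relevant phases being $\xi(l(\cdot)) = 1$ and $\xi(s(E)) = \left(\tfrac{\det E}{p}\right)$.

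Now suppose $f \neq 0$ is a $\tilde\rho|_{\Gamma_p}$-invariant vector. Invariance under all the $l(B^{-1}a)$ forces $x^T B^{-1}a\,x \equiv 0 \md p$ for every $x \in \Supp(f)$ and every $a \in \mathcal{A}$ (the nonzero scalar $-r/2$ drops out). The key observation is that, after identifying $(\Z/p\Z)^4 \cong \mathcal{A}$ as $\mathcal{A}$-modules, the symmetric bilinear form $(x,y)\mapsto x^T B^{-1}y$ is $\mathcal{A}$-invariant (again by $B\Delta^T = \Delta B$), hence — $\mathcal{A}$ being étale with nondegenerate trace form — of the shape $x^T B^{-1}y = \mathrm{Tr}_{\mathcal{A}/\F_p}(\lambda_0\,xy)$ for some unit $\lambda_0 \in \mathcal{A}^*$. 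Then $x^T B^{-1}a\,x = \mathrm{Tr}_{\mathcal{A}/\F_p}(\lambda_0 a\,x^2)$, and as $a$ runs over $\mathcal{A}$ so does $\lambda_0 a$, so nondegeneracy of the trace form gives $x^2 = 0$ in $\mathcal{A}$, whence $x = 0$ because $\mathcal{A}$ is reduced. Thus $\Supp(f) = \{0\}$ and $f = c\,\mathbf{1}_{\{0\}}$ with $c \neq 0$. To finish, I would contradict invariance under the torus: the norm maps of the field factors of $\mathcal{A}$ are surjective, so $\det:\mathcal{A}^* \to \F_p^*$ is surjective; choosing $E \in \mathcal{A}^*$ with $\det E$ a non-square mod $p$, we get $\tilde\rho(s(E))(c\,\mathbf{1}_{\{0\}}) = \left(\tfrac{\det E}{p}\right)c\,\mathbf{1}_{\{0\}} = -c\,\mathbf{1}_{\{0\}} \neq c\,\mathbf{1}_{\{0\}}$, a contradiction. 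Hence no nonzero invariant vector exists.

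I expect the main obstacle to be the third paragraph: recognizing the family of quadratic conditions $\{x^T B^{-1}a\,x = 0 : a \in \mathcal{A}\}$ as the single equation $x^2 = 0$ in the algebra $\mathcal{A}$ acting on its own regular module. This rests on three ingredients — that $(\Z/p\Z)^4$ is a cyclic $\F_p[\Delta]$-module in the generic case; the intertwining relation $B\Delta^T = \Delta B$, which makes $x^T B^{-1}(\cdot)$ an $\mathcal{A}$-module form; and the classification of such forms via the (nondegenerate) trace pairing of the étale algebra $\mathcal{A}$ — the last of which is precisely the structural reason the matrix discriminant $\Delta$ (and the matrices $B,C$) were introduced. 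Everything else is a short computation using the Weil-representation formulas for $l(W)$ and $s(E)$.
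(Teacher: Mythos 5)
Your proposal is correct, and it genuinely diverges from the paper's argument in two respects.

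\textbf{The kill shot at the end.} The paper never uses the explicit multiplier: it keeps unknown phases $\xi_M$, absorbs them into parameters $t_M$, uses $s(\lambda I)$-invariance to kill all $t_M$, and then repeats the whole analysis on the upper-triangular side (via the Fourier clause of Proposition~\ref{weil-prop} and the conjugation $u(W) = J^{-1}l(-W)J$) to force $\Supp(\widehat f) \subseteq \{0\}$ as well; the two support conditions then contradict each other for $f\neq 0$. You instead stop after $\Supp(f)\subseteq\{0\}$ and use the explicit formulas $\xi(l(W))=1$ and $\xi(s(E))=\bigl(\tfrac{\det E}{p}\bigr)$ (recorded in Remark~2 after Proposition~\ref{weil-prop}, with the proof deferred to \cite{neuhauser}) to show that $\mathbf{1}_{\{0\}}$ cannot be $s(E)$-invariant once $\det E$ is a non-residue — a valid deduction, since $\det$ is the product of the norm maps $k_i^* \to \F_p^*$, each surjective. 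This is a cleaner and shorter finish, but it imports precisely the ingredient the paper goes out of its way \emph{not} to use (``In this paper, we will not need explicit values of $\xi$, and the mere existence is a much easier statement to prove''). If you wanted to remain self-contained at the level of Proposition~\ref{weil-prop} as stated, you would need to reinsert the $t_M$-and-dilation trick for the lower-triangular step, and you would no longer have the Legendre-symbol character available for the endgame.

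\textbf{The ``support at zero'' step.} Your identification $(\Z/p\Z)^4 \cong \mathcal{A} := \F_p[\Delta]$ as a cyclic module, the $\mathcal{A}$-invariance of $\beta(x,y)=x^TB^{-1}y$ coming from $B\Delta^T = \Delta B$, the representation $\beta(x,y)=\operatorname{Tr}_{\mathcal{A}/\F_p}(\lambda_0 xy)$ with $\lambda_0 \in \mathcal{A}^*$, and the conclusion $x^2 = 0 \Rightarrow x = 0$ in the reduced algebra $\mathcal{A}$ are all correct. The paper proves exactly the same statement (that the simultaneous vanishing of $x^TCMx$ for $M \in \F_p[\Delta]$ forces $x=0$) by diagonalising $\Delta$ over $\overline{\F}_p$ and inverting a Vandermonde matrix scaled by the nonzero $\phi(v_i,v_i)$. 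These are the same nondegeneracy of the trace form, just seen at different levels of abstraction; your version is more structural and makes clearer why the hypothesis ``$\Delta$ has distinct eigenvalues'' is what is really being used.

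In short: the route is genuinely different, and each approach buys something. Yours is conceptually tighter, uses only the lower-unipotent/torus part of $\Gamma_p$, and isolates the étale-algebra structure; the paper's is longer but relies only on the existence statement of the Weil representation (no explicit cocycle), at the cost of a second pass through the dual/Fourier side.
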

\begin{proof}
We will show that the conclusion holds under the assumption that $\Delta$ has distinct eigenvalues and is invertible over $\F_p$. This includes all sufficiently large primes $p$. Indeed $Q$ is generic, so by definition $\Delta$ has distinct eigenvalues and is invertible over $\Q$. Therefore the same is true over $\F_p$, $p$ sufficiently large, for the reasons detailed at the start of the proof of Proposition \ref{gen-prop}.

Suppose from now on that $\Delta$ has distinct eigenvalues and is invertible over $\F_p$. Since $\Gamma_p$ is invariant under the dilation $\gamma \mapsto \gamma^{(r)}$, it suffices to consider the case $\tilde\rho = \rho$. Suppose, then, that
\begin{equation}\label{invariance-cond}  \rho(\gamma) f = f\end{equation} for all $\gamma \in \Gamma_p$. Our aim is to show that $f$ is identically zero. To examine the condition \eqref{invariance-cond} we will look at the following particular elements $\gamma$, where $B, C \in \Mat_4(\F_p)$ are the specific symmetric matrices described in \eqref{BC}:
\begin{enumerate}
\item the upper triangular elements $u(MB) = \smallpmat{I}{MB}{0}{I}$, where $M \in \F_p[\Delta]$;
\item the lower triangular elements $l(CM) = \smallpmat{I}{0}{CM}{I}$, where $M \in \F_p[\Delta]$;
\item the diagonal elements $s(\lambda I) = \smallpmat{\lambda I}{0}{0}{\lambda^{-1} I}$,  $\lambda \in \F_p^*$. 
\end{enumerate}
Now that we know from Proposition \ref{gen-prop} that $\Gamma_p = \tau^{-1} \SL_2(\F_p[\Delta]) \tau$ (where $\tau$ is defined in \eqref{tau-def}), so one may easily check using \eqref{ul-conjs} that all of these elements do lie in $\Gamma_p$. 

Now we already have formulae for the actions of the elements in (2) and (3), directly from Proposition \ref{weil-prop}. 
Namely, 
\begin{equation}\label{first-inv} \rho(l(CM)) f(x) = \xi_M e_p(-\frac{1}{2} x^T CM x) f(x)\end{equation}for some unit complex number $\xi_M = \xi(l(CM))$ (we do not care exactly what this is)
and
\begin{equation}\label{second-inv}  \rho(s(\lambda I)) f(x) = \xi'_{\lambda} f(\lambda^{-1} x)\end{equation} for some unit complex number $\xi'_{\lambda}$.
If \eqref{invariance-cond} holds, it follows from \eqref{first-inv} that for each $M$
\[ \Supp(f) \subset \{ x : x^T CM x = t_M \}\] for some parameters $t_M$ (in fact satisfying $e_p(-\frac{1}{2} t_M) = \xi_M$). Now \eqref{second-inv} tells us that $\Supp(f(x)) = \Supp(f (\lambda^{-1} x))$ for all $\lambda \in \F_p^*$, therefore (taking $\lambda \neq \pm 1$) we see that in fact all the $t_M$ must be zero, that is
\begin{equation}\label{support-cond} \Supp(f) \subset \{ x : x^T CM x = 0 \} \quad \mbox{for all $M \in \F_p[\Delta]$}. \end{equation}

To get a formula for the upper triangular action, we note the identity
\[ u(W) = J^{-1} l(-W) J\] for any matrix $W$, where as usual $J = \smallpmat{0}{I}{-I}{0}$.  Thus if $f$ is invariant under $\rho(u(MB))$ then $\rho(J) f$ is invariant under $\rho(l(-MB))$. As above, this implies that $\Supp (\rho(J) f) \subset \{ x : x^T MB x = t'_M\}$. Since $J^{-1} s(\lambda I) J = s(\lambda^{-1} I)$, we see that $\rho(J) f$ is invariant (up to multiplication by a scalar) by dilation $x \mapsto \lambda^{-1} x$. Therefore all of the $t'_M$ are in fact zero.  Moreover, from Proposition \ref{weil-prop} we know that $\rho(J) f$ is the (normalised) Fourier transform of $f$ times a scalar, and so we come to the conclusion that 

\begin{equation}\label{support-cond-dual} \Supp(\hat{f}) \subset \{ x : x^T MB x = 0 \} \quad  \mbox{for all $M \in \F_p[\Delta]$} .\end{equation}

Conditions \eqref{support-cond} and \eqref{support-cond-dual} seem highly incompatible and should, for example, violate the uncertainty principle under reasonable assumptions. However, a proof seems not to be completely straightforward (and indeed the statement fails in sufficiently degenerate situations, for example if we were to allow $\Delta = 0$). 

Let us begin the argument. Consider the bilinear form $\phi(x,y) = x^T C y$ on $\overline{\F}_p^4 \times \overline{\F}_p^4$. Since $BC = \Delta$ and $\Delta$ is invertible over $\F_p$, $C$ is invertible over $\F_p$ and so this is a non-degenerate form. We note that $\Delta$ is self-adjoint with respect to $\phi$:
\[ \phi(x, \Delta y) = x^T CBC y,\]
\[ \phi(\Delta x, y) = x^T \Delta^T C y = x^T CBC y\] (note that $B, C$ are both symmetric). 
By the usual argument, this means that eigenvectors of $\Delta$ with distinct eigenvalues are orthogonal with respect to $\phi$. Indeed, if $\Delta v_1 = \lambda_1 v_1$ and $\Delta v_2 = \lambda_2 v_2$ then
\[ \lambda_2 \phi(v_1, v_2) = \phi(v_1, \Delta v_2) = \phi(\Delta v_1, v_2) = \lambda_1 \phi(v_1, v_2).\]
Now since $\Delta$ has distinct eigenvalues, it is diagonalisable over $\overline{\F}_p$. Suppose that $v_i$ are eigenvectors with (distinct) eigenvalues $\lambda_i$, $i = 1,2,3,4$. These are a basis for $\overline{\F}_p^4$. Write $x \in \overline{\F}_p^4$ as $\sum_{i=1}^4 x_i v_i$. Then
\begin{equation}\label{xcx} x^T C \Delta^j x = \phi(x, \Delta^j x) = \sum_{i=1}^4 x_i^2 \lambda_i^j \phi(v_i, v_i).\end{equation}
Note that $\phi(v_i, v_i) \neq 0$ (if it was, $v_i$ would be orthogonal with respect to $\phi$ to all of $v_1, v_2, v_3, v_4$ and hence to all of $\overline{\F}_p^4$, contrary to the fact that $\phi$ is non-degenerate). 

Therefore the matrix with $(i,j)$-entry $\lambda_i^j \phi(v_i, v_i)$ ($1 \leq i \leq 4, 0 \leq j \leq 3$) is non-singular, its determinant being a non-zero multiple $\prod_{i=1}^4 \phi(v_i, v_i)$ of a certain Vandermonde determinant. It follows from \eqref{xcx} that if $x^T C \Delta^j x = 0$ for $j = 0,1,2,3$ then $x = 0$, and so any $f$ satisfying \eqref{support-cond} is supported only at zero.

Noting that $x^T \Delta^{j+1} B x = (Bx)^T C \Delta^{j} (Bx)$, we can also conclude that if $x^T \Delta^{j+1} B x = 0$ for $j = 0,1,2,3$ then $x = 0$, and so any $f$ satisfying \eqref{support-cond-dual} has $\hat{f}$ supported only at zero.

These two facts about $f$ are completely incompatible, unless $f$ is identically zero: if $f$ is supported at zero, $\hat{f}$ is in fact constant on $(\Z/p\Z)^4$.

This completes the proof of Proposition \ref{key-nontrivial}, and hence that of Proposition \ref{quasirandom-gamma}.
\end{proof}

All of the main results in the paper are now established.\vspace*{6pt}

\emph{Remark.} It is in fact possible to show (under the assumptions on $\Delta$ in force throughout this section) that $\rho|_{\Gamma_p}$ is isomorphic to a Weil representation of $\SL_2(\F_p[\Delta])$ on $\ell^2(\F_p[\Delta])$, by giving an explicit intertwining map. This fact can be used to give an alternative proof of Proposition \ref{key-nontrivial} which, while more natural than the \emph{ad hoc} arguments presented here, requires quite a bit more setting up. We intend to give a full account in future work.

\appendix

\section{Facts about \texorpdfstring{$\SL_2(k)$}{SL2(k)}}\label{sl2-app}

We collect various well-known facts about $\SL_2(k)$, $k$ a finite field, which we used in the main text. For our purposes, ``rough'' versions of these facts (passing to subgroups of index $O(1)$, etc) would be quite sufficient but we use the precise versions when sufficiently clean results are relatively easily-available. 

\begin{proposition}\label{sl2-facts}
Let $k$ be a finite field of odd characteristic. Then
\begin{enumerate}
\item The smallest nontrivial complex representation of $\PSL_2(k)$ has dimension at least $\frac{1}{2}(|k| - 1)$. 
\item If $k$ has order at least $5$ then $\SL_2(k)$ is perfect.
\item Any proper subgroup of $\SL_2(k)$ has a subgroup of index at most $C$ which is conjugate to a subgroup of one of  \textup{(i)} the group of upper triangular matrices or \textup{(ii)} $\SL_2(k_0)$ for some proper subfield $k_0 < k$. 
\item Every automorphism of $\PSL_2(k)$ or $\SL_2(k)$ is a composition of a conjugation by elements of $\GL_2(k)$, and a power of the Frobenius automorphism of $k$.
\end{enumerate}

\end{proposition}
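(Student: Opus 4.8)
The plan is to reduce each of the four items to a classical fact about $\SL_2$ and $\PSL_2$ of a finite field, giving the short proof where there is one and citing a standard reference otherwise. Item (2) is self-contained. Gaussian elimination over the field $k$ shows $\SL_2(k)$ is generated by the elementary unipotents $u^+(t) := \smallpmat{1}{t}{0}{1}$ and $u^-(t) := \smallpmat{1}{0}{t}{1}$, $t \in k$; since $|k| \geq 5$ we have $|k^*| \geq 4$, so (as $\lambda^2 = 1$ has at most two solutions) there is $\lambda \in k^*$ with $\lambda^2 \neq 1$, and then $[\smallpmat{\lambda}{0}{0}{\lambda^{-1}}, u^+(t)] = u^+((\lambda^2-1)t)$. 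As $t$ runs over $k$ so does $(\lambda^2-1)t$, hence every $u^+(s)$, and symmetrically every $u^-(s)$, is a commutator, and $\SL_2(k)$ equals its derived subgroup. (Every field $\F_{p^n}$ that occurs as a factor of one of the groups $\Gamma_p$ has at least $p \geq 5$ elements, so the hypothesis $|k| \geq 5$ is met in all our applications.)

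For items (1) and (4) I would quote the classical statements and only indicate the $\SL_2$ bookkeeping. For (1): the character table of $\SL_2(k)$ is classical (Frobenius, Schur, Jordan; see e.g.\ \cite{kowalski}), and apart from the trivial character the irreducibles of $\SL_2(k)$ which are trivial on the centre $\{\pm I\}$ --- equivalently, the irreducibles of $\PSL_2(k)$ --- have degrees $|k|$, $|k| \pm 1$ and, when $|k|$ is odd, $\tfrac{1}{2}(|k| \pm 1)$, the smallest being $\tfrac{1}{2}(|k|-1)$; the same table shows the ``genuine'' irreducibles of $\SL_2(k)$ also have degree at least $\tfrac{1}{2}(|k|-1)$, which is the form used for the factors of $\Gamma_p$. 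For (4): $\{\pm I\}$ is the characteristic centre of $\SL_2(k)$, $\PSL_2(k)$ is simple and (by item (2)) $\SL_2(k)$ is perfect, so the natural map $\Aut(\SL_2(k)) \to \Aut(\PSL_2(k))$ is injective, and it is surjective because automorphisms of $\PSL_2(k)$ lift (standard, and already used in Case~3 of the proof of Proposition~\ref{gen-prop}); the classical description of $\Aut(\PSL_2(k))$ (see e.g.\ \cite{tao-expansion}) says every automorphism is a conjugation by an element of $\PGL_2(k)$ followed by a power of the Frobenius of $k$, and since $\GL_2(k) \twoheadrightarrow \PGL_2(k)$ acts on $\SL_2(k)$ by conjugation through $\PGL_2(k)$ while the Frobenius acts entrywise, the same description transfers to $\SL_2(k)$.

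Item (3) follows from Dickson's classification of the subgroups of $\PSL_2(q)$; I would quote the version in \cite[Ch.~2]{tao-expansion}. Up to conjugacy, a proper subgroup of $\PSL_2(q)$ is one of: (a) one of the bounded groups $A_4$, $S_4$, $A_5$; (b) a subgroup of a Borel (upper-triangular) subgroup; (c) a subgroup of the normalizer of a split or non-split maximal torus; (d) a subgroup of a conjugate of $\PGL_2(q_0)$ for a proper subfield $\F_{q_0} < \F_q$. In case (a) one passes to the trivial subgroup; in (c) to the cyclic torus itself (index $\leq 2$), which is diagonalisable over $\overline{\F}_p$ and hence conjugate over $\overline{\F}_p$ into the upper-triangular group; in (d) to the index-$\leq 2$ subgroup lying in $\PSL_2(q_0)$. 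Pulling these back along $\SL_2(q) \to \PSL_2(q)$ and absorbing $\{\pm I\}$ together with the bounded exceptions into the index gives the dichotomy with an absolute $C$. I expect this to be the main obstacle --- not the mathematics, which is entirely standard, but packaging it so it is both clean and genuinely usable in Lemma~\ref{generate-scalar}: one has to handle the normalizer-of-a-non-split-torus case (which forces one to read ``conjugate into the upper-triangular group'' as permitting conjugation over $\overline{\F}_p$) and to keep careful track of the field over which each conjugation takes place.
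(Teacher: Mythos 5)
Your proposal is correct and takes essentially the same approach as the paper, which simply quotes the classical facts with references (Prasad for (1), Lang for (2), Tao's expansion book and O.~King for (3), Steinberg for (4)); you additionally supply the short commutator proof for (2) and unpack the Dickson-classification reduction for (3). Your remark that the conjugation in (3)(i) must be permitted over $\overline{\F}_p$ --- to absorb the normalizer of a non-split torus --- is a genuine subtlety worth flagging: the paper's proof of Lemma~\ref{generate-scalar} writes $P \in \SL_2(k)$, but since the Schwartz--Zippel argument through Lemma~\ref{nullstellensatz} is insensitive to where the coefficients of the resulting polynomial live, the argument goes through unchanged with $P \in \GL_2(\overline{\F}_p)$.
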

\begin{proof}
(1) goes back well over a century, to Jordan and Schur. For a nice and easy-to-access discussion, see Prasad's notes \cite{prasad}.

(2)  See Lang \cite[Chapter XIII, Theorem 8.3]{lang}.

(3) The rough statement given here, which suffices for our purposes is \cite[Theorem 5.2.7]{tao-expansion}. As one would expect, a detailed classification of maximal subgroups of $\SL_2(k)$ has been known for more than a century. It is somewhat complicated; the details, as well as references to the original papers, may be found in \cite{oking}.

(4) This is certainly well-known. A standard reference is \cite[3.2]{steinberg-1}. See also \cite{steinberg-2}. The MathOverflow post \cite{auto-overflow} is helpful in navigating these papers. 

\end{proof}

\section{Weil representation of \texorpdfstring{$\Sp_8(\Z/q\Z)$}{Sp8(Z/qZ)}}\label{weil-app}

In the section we construct the representation of $\rho : \Sp_8(\Z/q\Z) \rightarrow \U(\ell^2((\Z/q\Z)^4)$ which we have been using throughout the paper, and whose properties are detailed in Proposition \ref{weil-prop}.  Suppose that $q = p_1 \cdots p_n$ is squarefree and odd. We assume the existence, for each $i$, of the Weil representations $\rho_i : \Sp_8(\Z/p_i\Z) \rightarrow \U(\ell^2((\Z/p_i\Z)^4))$, the construction of which is given in detail in \cite{neuhauser} and shown to satisfy the properties of Proposition \ref{weil-prop} (and in fact that paper gives details of the multiplier $\xi$, whose precise properties are unimportant in this paper). Since $\Sp_8(\Z/q\Z) \cong \prod_{i=1}^n \Sp_8(\Z/p_i\Z)$, one thinks of looking at the (exterior) tensor product $\rho = \otimes_{i=1}^n \rho_{i}$. However, this turns out to need a small modification.

For each $i$, denote by $\sigma_i : \Sp_8(\Z/p_i\Z) \rightarrow \Sp_8(\Z/p_i\Z)$ the automorphism defined by $\smallpmat{M_1}{M_2}{M_3}{M_4}^{\sigma_i} := \smallpmat{M_1}{\lambda_i M_2}{\lambda_i^{-1} M_3}{M_4}$, where $\lambda_i := \prod_{i \neq j} p_i$. Note that this is, in fact, one of the dilates we considered earlier (see \eqref{gr-def}), but with parameter $r = \lambda_i^{-1}$. It may not be an inner automorphism (this depends on whether or not $\lambda_i$ is a square mod $p_i$).  Each $\rho_{i}$ may be twisted by $\sigma_i$ to give a representation $\tilde\rho_i : \Sp_8(\Z/p_i\Z) \rightarrow \U(\ell^2((\Z/p_i\Z)^4))$, defined by $\tilde\rho_i(g) f := \rho(g^{\sigma_i}) f$. This will be isomorphic to $\rho_i$ if $\lambda_i$ is a square mod $p_i$, but not otherwise. However, we will not need this last fact.

We now construct $\rho$ as the tensor product $\bigotimes_{i=1}^n \tilde\rho_i$, which we shall shortly show how to realise concretely in $\ell^2((\Z/q\Z)^4)$. From here on, we will abuse notation by omitting explicit notation for projection maps from $(\Z/q\Z)^4 \rightarrow (\Z/p_i\Z)^4$, from $\Sp_8(\Z/q\Z)$ to $\Sp_8(\Z/p_i\Z)$, or from $\Mat_4(\Z/q\Z)$ to $\Mat_4(\Z/p_i\Z)$ when the domain is clear from context. Thus, for example, for functions $f_i \in \ell^2((\Z/p_i\Z)^4)$ we define the ``pure tensor'' $f(x) := \prod_{i=1}^n f_i(x)$, but it would be more correct, though cumbersome, to write $\prod_{i=1}^n f_i(\pi_i(x))$ where $\pi_i : (\Z/q\Z)^4 \rightarrow (\Z/p_i\Z)^4$ is the natural projection.

For a pure tensor $f$ as above define
\begin{equation}\label{tens-explicit-rho} \rho(g) f(x) = \prod_{i=1}^n \tilde\rho_{i}(g) f_i(x) = \prod_{i=1}^n \rho_{i}(g^{\sigma_i}) f_i(x).\end{equation} 
This is well-defined by the universal property of tensor products. 

We now turn to the verification of the properties stated in Proposition \ref{weil-prop}. 
Recall that the properties to be established are as follows (for some unit complex numbers $\xi(\cdot )$):

\begin{equation}\label{seq} \rho(s(E)) f(x) = \xi(s(E)) f(E^{-1} x)\end{equation}
for $s(E) := \smallpmat{E}{0}{0}{E^{-T}}$ with $E$ invertible;

\begin{equation}\label{jeq} \rho(J) f(x) =  \xi(J) q^2 \E_{y \in (\Z/q\Z)^4} f(y) e(x^T y);\end{equation}

\begin{equation}\label{ueq} \rho(l(W)) f(x) = \xi(l(W))e_q(-\frac{1}{2} x^T W x)f(x)\end{equation}
for $l(W) := \smallpmat{I}{0}{W}{I}$ with $W$ symmetric.

We will also, of course, be using the corresponding properties for the prime case $\rho_i$. To avoid confusion, we write $\xi_i(\cdot)$ for the corresponding unit complex numbers. It is enough to check \eqref{seq}, \eqref{jeq} and \eqref{ueq} for pure tensors $f$.

\emph{Proof of \eqref{seq}.} We have
\begin{align*} \rho(s(E)) f(x) & = \prod_{i=1}^n \rho_i (s(E)^{\sigma_i}) f_i(x)  = \prod_{i=1}^n \rho_i (s(E)) f_i(x) \\ & = \prod_{i=1}^n \xi_i(s(E)) f_i(E^{-1} x) = \xi(s(E)) f(E^{-1} x),\end{align*} where $\xi(s(E)) := \prod_{i=1}^n \xi_i(s(E))$. This establishes \eqref{seq}.\vspace*{8pt}

For the remaining two parts, we will need the relation
\begin{equation} \label{eqep-rel} e_q(\lambda t) = \prod_{i=1}^n e_p(t),\end{equation} where $\lambda = \sum_{i=1}^n \lambda_i$.\vspace*{8pt}

\emph{Proof of \eqref{jeq}.} We have
\begin{align*} \rho(J) f(x) & = \prod_{i=1}^n \rho_i(J^{\sigma_i}) f_i(x) = \prod_{i=1}^n \rho_i(s(\lambda_i I) J) f_i(x) \\ & =  q^2 \xi(J) \prod_{i=1}^n \E_{y_i \in (\Z/p_i\Z)^4} f_i(y_i) e_{p_i}(\lambda_i^{-1} x^T y_i),\end{align*}
where $\xi(J) := \prod_{i=1}^n \xi_i(s(\lambda_i I)) \xi_i(J)$. Applying \eqref{eqep-rel} gives \[ \prod_{i=1}^n e_{p_i}(\lambda_i^{-1} x^T y_i) = e_q(x^T y),\] where $y \in (\Z/q\Z)^4$ reduces to $y_i$ in $(\Z/p_i\Z)^4$, and the claim follows.\vspace*{8pt}

\emph{Proof of \eqref{ueq}.} We have
\begin{align*}
\rho(l(W)) f(x) & = \prod_{i=1}^n \rho_i(l(W)^{\sigma_i}) f_i(x) = \prod_{i=1}^n \rho_i (l(\lambda_i^{-1} W)) f_i (x) \\ & = \xi(l(W)) \prod_{i=1}^n e_{p_i}(-\frac{1}{2} x^T \lambda_i^{-1} W x) f_i(x),
\end{align*}
where $\xi(l(W)) := \prod_{i=1}^n \xi_i (l(\lambda_i^{-1} W))$. The claim again follows using \eqref{eqep-rel}.

\section{Almost-invariant measures}\label{appc}

We invite the reader to recall the notation concerning probability measures on finite groups $G$ as described at the start of Section \ref{sec8}. In particular, $\Vert \cdot \Vert$ means the $2$-norm with respect to the counting measure on $G$, $\Vert \mu \Vert = \big( \sum_{x \in G} \mu(x)^2 \big)^{1/2}$. We extend the notation $\mu_H$ for the uniform measure on a subgroup $H$ to arbitrary sets: thus if $A \subset G$ is a finite set then we write $\mu_A$ for the uniform measure on $A$, that is to say the measure which puts weight $|A|^{-1}$ on each point of $A$.

We have the following instance of Young's inequality.

\begin{lemma}[Young's inequality]\label{young}
Let $\mu, \nu$ be two probability measures on $G$. Then we have $\Vert \mu \ast \nu \Vert \leq \Vert \mu \Vert$.
\end{lemma}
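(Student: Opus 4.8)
The plan is to use Cauchy--Schwarz pointwise, exploiting that $\nu$ has total mass $1$. Writing the convolution as $\mu\ast\nu(x) = \sum_{g\in G} \mu(xg^{-1})\nu(g)$, I would first record that this exhibits $\mu\ast\nu$ as a weighted average of the right-translates $g\mapsto \mu(\,\cdot\, g^{-1})$ of $\mu$, with weights $\nu(g)$ summing to $1$.

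For the estimate itself, apply Cauchy--Schwarz to the sum $\sum_g \big(\mu(xg^{-1})\nu(g)^{1/2}\big)\cdot\nu(g)^{1/2}$, obtaining
\[ \mu\ast\nu(x)^2 \le \Big(\sum_g \mu(xg^{-1})^2 \nu(g)\Big)\Big(\sum_g \nu(g)\Big) = \sum_g \mu(xg^{-1})^2 \nu(g), \]
using $\sum_g \nu(g) = 1$ in the last step. Summing over $x\in G$ and interchanging the order of summation,
\[ \Vert\mu\ast\nu\Vert^2 = \sum_x \mu\ast\nu(x)^2 \le \sum_g \nu(g) \sum_x \mu(xg^{-1})^2 = \sum_g \nu(g)\,\Vert\mu\Vert^2 = \Vert\mu\Vert^2, \]
where we used that $x\mapsto xg^{-1}$ is a bijection of $G$ for each fixed $g$ (so the inner sum equals $\Vert\mu\Vert^2$) and then $\sum_g \nu(g)=1$ once more. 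Taking square roots gives $\Vert\mu\ast\nu\Vert\le\Vert\mu\Vert$.

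There is no genuine obstacle here; the only point requiring a little care is the normalisation convention, namely that $\Vert\cdot\Vert$ denotes the counting-measure $\ell^2$ norm rather than the averaged one used elsewhere in the paper, but this is irrelevant since every manipulation above is an honest sum over $G$. Alternatively, one could phrase the argument via the triangle inequality: left and right translations are isometries of $\ell^2(G)$, and $\mu\ast\nu = \sum_g \nu(g)\,(\text{right-translate of }\mu\text{ by }g)$ is a convex combination of such isometric copies of $\mu$, so $\Vert\mu\ast\nu\Vert\le\sum_g\nu(g)\Vert\mu\Vert=\Vert\mu\Vert$. This variant makes transparent that the companion bound $\Vert\mu\ast\nu\Vert\le\Vert\nu\Vert$ holds by the symmetric argument, though only the stated form is needed.
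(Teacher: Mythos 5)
Your main argument is the same as the paper's: a pointwise Cauchy--Schwarz with the probability measure $\nu$ supplying the weights, followed by a sum over $G$; the only difference is the cosmetic choice of summation variable ($g=y^{-1}x$ versus $y$). The triangle-inequality reformulation you offer at the end is a nice alternative viewpoint, but the computation you actually carry out matches the paper's proof.
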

\begin{proof}
We have
\[ \mu \ast \nu(x) = \sum_{y \in G} \big( \mu(y)^2 \nu(y^{-1}x) \big)^{1/2} \nu(y^{-1}x)^{1/2}.\]
By Cauchy-Schwarz and the fact that $\nu$ is a probability measure,
\[ \mu \ast \nu(x)^2 \leq \sum_{y \in G} \mu(y)^2 \nu(y^{-1}x).\]
Finally, summing over $x \in G$ gives the result.
\end{proof}
In the main text we required a statement about almost equality here in the case that $\mu = \nu$ and both are symmetric. The actual statement we quoted in the main text is Corollary \ref{stability-young-cor} below, but the heart of it is Lemma \ref{stable-young}. This result should be thought of as ``well-known'', but it is hard to give a precise reference. The basic idea of the proof goes back to Fournier \cite{fournier}; see \cite[Proposition 5.4]{eisner-tao} for the abelian case or \cite[Appendix A]{BGGT} for a closely related result. 

\begin{lemma}\label{stable-young}
Let $\mu$ be a symmetric probability measure on a finite group $G$. Let $\eps$ be sufficiently small positive constant. Suppose that $\Vert \mu \ast \mu \Vert \geq (1 - \eps ) \Vert \mu \Vert^2$. Then there is a subgroup $H \leq G$ such that $\Vert \mu - \mu_H \Vert \ll \eps^c \Vert \mu \Vert$. \end{lemma}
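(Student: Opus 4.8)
The plan is to establish the rigidity statement that sits behind Young's inequality $\Vert \mu \ast \mu\Vert \le \Vert \mu\Vert$ (Lemma \ref{young}): near-equality forces $\mu$ to be almost invariant under left-translation by a large set of group elements, which in turn forces $\Supp(\mu)$ to be an approximate coset of a subgroup on which $\mu$ is nearly flat, whence $\mu$ is close to the uniform measure on that subgroup. (In the regime where $\mu$ attains its maximum at the identity this coset can be taken to contain $e$, so the subgroup itself appears; this is the situation in the applications, where $\mu$ is an even symmetrised convolution power and hence $\mu(e) = \Vert \mu\Vert_\infty$.)

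\emph{Step 1: the Cauchy--Schwarz defect.} Writing $\lambda_g$ for the (unitary) left translation $\lambda_g f(x) := f(g^{-1}x)$, one has $\mu \ast \mu = \E_{g\sim\mu}\lambda_{g^{-1}}\mu$, an average of translates of $\mu$, each of norm $\Vert \mu\Vert$. The Hilbert space identity $\E\Vert V - \E V\Vert^2 = \E\Vert V\Vert^2 - \Vert \E V\Vert^2$, applied to $V = \lambda_{g^{-1}}\mu$, yields the exact defect
\[ \E_{g\sim\mu}\,\Vert \lambda_{g^{-1}}\mu - \mu\ast\mu\Vert^2 = \Vert \mu\Vert^2 - \Vert \mu\ast\mu\Vert^2, \]
which re-indexes to $\tfrac12\E_{g\sim\mu\ast\mu}\Vert \mu - \lambda_g\mu\Vert^2 = \Vert \mu\Vert^2 - \Vert \mu\ast\mu\Vert^2$. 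Under the hypothesis $\Vert \mu\ast\mu\Vert \ge (1-\eps)\Vert \mu\Vert$ the right-hand side is $\le 2\eps\Vert \mu\Vert^2$, so by Markov there is a symmetric set $S \ni e$ with $(\mu\ast\mu)(S) \ge 1 - \eps^{1/2}$ on which $\Vert \mu - \lambda_g\mu\Vert \le 2\eps^{1/4}\Vert \mu\Vert$, and then $SS$ lies inside $S_2 := \{g : \Vert \mu - \lambda_g\mu\Vert \le 4\eps^{1/4}\Vert \mu\Vert\}$ by the triangle inequality.

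\emph{Step 2: from almost-invariance to a subgroup.} Since $(\mu\ast\mu)(S) = \E_{h\sim\mu}\mu(h^{-1}S) \ge 1 - \eps^{1/2}$, there is a set $S'$ with $\mu(S') \ge 1 - \eps^{1/4}$ such that $\mu(h^{-1}S) \ge 1 - \eps^{1/4}$ for all $h \in S'$; any two such translates of $S$ must meet, which gives $S'(S')^{-1} \subseteq SS \subseteq S_2$. A one-line averaging argument (expand $\sum_{g\in S_2}\langle \mu, \lambda_g\mu\rangle$ and bound each $\mu(S_2 x) \le 1$) shows $|S_2| \le (1 + O(\eps^{1/2}))\Vert \mu\Vert^{-2}$, while Cauchy--Schwarz gives $|S'| \ge (1 - O(\eps^{1/4}))\Vert \mu\Vert^{-2}$; hence $S'$ is a set of doubling $1 + O(\eps^{1/4})$. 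I would then invoke the stability form of Kneser's theorem — a set of doubling $1 + \delta$ is within $O(\delta^c)$, in normalised symmetric difference, of a coset $x_0 H$ of a subgroup $H$ with $|H| \asymp |S'|$ — together with the fact that near-invariance of $\mu$ under the many elements of $S'(S')^{-1}$ propagates to $\mu$ being $O(\eps^c)$-flat across $S'$, to conclude $\Vert \mu - \mu_{x_0 H}\Vert \ll \eps^c\Vert \mu\Vert$. When $\mu(e) = \Vert\mu\Vert_\infty$ the approximate coset contains its own maximum point $e$, forcing $x_0 H = H$ and hence the stated bound; otherwise the conclusion should be phrased with a coset of $H$.

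\emph{Main obstacle.} Step 1 is routine; all the work is in Step 2, and specifically in passing from a set of doubling $1 + O(\eps^c)$ to an honest subgroup with only a \emph{polynomial} loss $\eps^c$ and \emph{no} logarithmic factor in $\Vert\mu\Vert_\infty^{-1}$ (a naive dyadic splitting of the level sets of $\mu$ would introduce such a logarithm, and must be avoided). This is precisely the ``stability of Young's convolution inequality'' phenomenon; it is well known but awkward to cite cleanly, and the arguments of Fournier \cite{fournier}, and their descendants in \cite[Proposition 5.4]{eisner-tao} and \cite[Appendix A]{BGGT}, carry it out in exactly the form needed, so I would follow those.
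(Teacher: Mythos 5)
Your proposal is correct in substance and takes essentially the same route as the paper: pass from near-equality in Young's inequality to a set of small doubling carrying almost all of the mass, then invoke the stability theory for near-extremizers (Fournier and its descendants in Eisner--Tao and BGGT). Your Step~1 is a cleaner entry point than the paper's: rather than working with level sets of $\mu\ast\mu$ and an averaging argument to produce $S = B\cap B^{-1}$, your exact variance identity
\[
\tfrac12\,\E_{g\sim\mu\ast\mu}\Vert\mu-\lambda_g\mu\Vert^2 = \Vert\mu\Vert^2 - \Vert\mu\ast\mu\Vert^2
\]
goes directly to near-invariance under left translation. Your Step~2 is sketchier than the paper's (the ``flatness across $S'$'' step needs care, precisely the issue you flag about avoiding a logarithmic loss), but that is exactly the content of the references you cite, so the plan is sound.

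The point worth dwelling on is one you have, correctly, flagged: the stability theorem produces a \emph{coset} $x_0H$, not a subgroup, and symmetry of $\mu$ alone does not upgrade the coset to a subgroup. The paper's own proof elides this, quoting Fournier as producing a subgroup $H$ with $|S\triangle H| = O(\eps^c)\min(|H|,|S|)$, which is stronger than the stability result actually gives. A concrete counterexample to the lemma as printed: take $G = \Z/4\Z$ and $\mu := \mu_{\{1,3\}}$. This is a symmetric probability measure, $\mu\ast\mu = \mu_{\{0,2\}}$, so $\Vert\mu\ast\mu\Vert = \Vert\mu\Vert$ exactly, and yet $\Vert\mu - \mu_H\Vert \geq 2^{-1/2}\Vert\mu\Vert$ for every subgroup $H\leq G$; the nearest uniform measure is on the coset $1+\{0,2\}$, not on a subgroup. (There is also a small typo: the hypothesis should read $\Vert\mu\ast\mu\Vert\geq(1-\eps)\Vert\mu\Vert$, not $(1-\eps)\Vert\mu\Vert^2$, to match Corollary~\ref{stability-young-cor} and the first display of the proof.) Your repair --- adding $\mu(e) = \Vert\mu\Vert_\infty$ as a hypothesis --- is exactly right, and it is free where the lemma is used: in Section~\ref{unif-conv-power} the lemma enters via Corollary~\ref{stability-young-cor} applied to $\nu = \mu_p^{(2^t)}$ with $t\geq 1$, which has the form $\sigma^\circ\ast\sigma$ with $\sigma = \mu_p^{(2^{t-1})}$, and any such measure attains its maximum at the identity by Cauchy--Schwarz. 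The lemma statement should therefore be amended either to include this extra hypothesis or to phrase the conclusion in terms of a coset of $H$; your proposal handles either option.
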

\begin{proof}
We assume throughout the proof that $\eps$ is sufficiently small. Set
\begin{equation}\label{a-def} A := \{x : \mu \ast \mu(x) \geq (1 - \eps^{1/2}) \Vert \mu \Vert^2\}.\end{equation} Then $A$ is a symmetric set, and we have (using the pointwise bound $\mu \ast \mu(x) \leq \Vert \mu \Vert^2$)
\begin{align*}
(1 - \eps) \Vert \mu \Vert^2 & \leq \Vert \mu \ast \mu \Vert^2 = \sum_x \mu \ast \mu(x)^2 \\ & \leq \Vert \mu \Vert^2 \mu \ast \mu(A) + (1 - \eps^{1/2}) \Vert \mu \Vert^2 \mu \ast \mu(A^c).
\end{align*}
Writing $\mu \ast \mu(A) = 1 - \delta$, so that $\mu\ast \mu(A^c) = \delta$, this rearranges to give $\delta \leq \eps^{1/2}$, that is to say $\mu \ast \mu(A) \geq 1 - \eps^{1/2}$.
Since 
\[ \mu \ast \mu(A) = \sum_y \mu(y) \mu(A y^{-1}),\] it follows that there is some $B := A y^{-1}$ such that $\mu (B) \geq 1 - \eps^{1/2}$. Since $\mu$ is symmetric, $\mu(B^{-1}) \geq 1 - \eps^{1/2}$. Therefore, setting $S := B \cap B^{-1}$, we see that $S$ is symmetric and $\mu(S) \geq 1 - 2\eps^{1/2}$. From \eqref{a-def} we have
\[ |S| \leq |A| \leq (1 + 2 \eps^{1/2}) \Vert \mu \Vert^{-2}.\] 
Now we have
\[ \Vert \mu - \mu_S \Vert^2 = \Vert \mu \Vert^2 - \frac{1}{|S|}(2 \mu(S) - 1) \leq \Vert \mu \Vert^2 (1 - \frac{1 - 4 \eps^{1/2}}{1 + 2 \eps^{1/2}}) < 8 \eps^{1/2} \Vert \mu \Vert^2\]
and so
\begin{equation}\label{mu-mus} \Vert \mu - \mu_S \Vert = O(\eps^{1/4})\Vert \mu \Vert.\end{equation}
Note that this implies 
\begin{equation}\label{compare} \frac{1}{2}\Vert \mu \Vert \leq \Vert \mu_S \Vert \leq 2 \Vert \mu \Vert.\end{equation}
Writing $\mu_S = \mu + (\mu_S - \mu)$ and expanding and using the triangle inequality, we have
\[ \Vert \mu_S \ast \mu_S\Vert \geq \Vert \mu \ast \mu\Vert - 2 \Vert \mu \ast (\mu_S - \mu)\Vert - \Vert (\mu_S - \mu) \ast (\mu_S - \mu)\Vert.\] By Young's inequality, \eqref{mu-mus}, \eqref{compare} and the assumption of the lemma it follows that 
\begin{equation}\label{s-assump}  \Vert \mu_S \ast \mu_S \Vert \geq (1 - O(\eps^{1/4}) \Vert \mu \Vert \geq (1 - O(\eps^{1/4})) \Vert \mu_S \Vert.\end{equation}
At this point we have essentially reduced the proof of the lemma to the case of a uniform measure on a set. Equation \eqref{s-assump} is equivalent to the statement that the number of multiplicative quadruples $s_1 s_2 = s_3 s_4$ in $S$ is $(1 - O(\eps^{1/4})|S|^3$. This is a well-known situation and (for example) Fournier \cite{fournier} implies that there is a subgroup $H$ such that $|S \triangle H| = O(\eps^c) \min(|H|, |S|)$.
Therefore
\[ \Vert \mu_S - \mu_H \Vert^2 = \frac{|S \triangle H|}{|S||H|} \ll \eps^c \Vert \mu_S \Vert^2 \ll \eps^c \Vert \mu \Vert^2.\]
The result follows from this, \eqref{mu-mus} and the triangle inequality. 
\end{proof}

Finally we give the result actually quoted in the main text.

\begin{corollary}\label{stability-young-cor}
Let $\mu$ be a symmetric probability measure on a finite group $G$. Let $\eps$ be a sufficiently small constant. Suppose that $\Vert \mu \ast \mu \Vert \geq (1 - \eps ) \Vert \mu \Vert$. Then there is a subgroup $H \leq G$ such that $\Vert \mu - \mu_H\Vert \ll \eps^c \Vert \mu_H \Vert$, $\mu(H) \geq 1 - O(\eps^c)$ and $|\Supp(\mu)| \geq (1 - O(\eps^c))|H|$. 
\end{corollary}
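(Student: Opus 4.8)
The plan is to deduce Corollary \ref{stability-young-cor} from Lemma \ref{stable-young} together with a couple of elementary manipulations, so there is no serious new content. The first step is to check that the hypothesis of the corollary implies that of the lemma: since $\mu$ is a probability measure we have $\mu(x)\leq 1$ for every $x$, hence $\Vert\mu\Vert^2=\sum_x\mu(x)^2\leq\sum_x\mu(x)=1$, so $\Vert\mu\Vert^2\leq\Vert\mu\Vert$ and therefore $\Vert\mu\ast\mu\Vert\geq(1-\eps)\Vert\mu\Vert\geq(1-\eps)\Vert\mu\Vert^2$. Lemma \ref{stable-young} then produces a subgroup $H\leq G$ with $\Vert\mu-\mu_H\Vert\ll\eps^c\Vert\mu\Vert$.

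Next I would convert this estimate from a statement about $\Vert\mu\Vert$ into one about $\Vert\mu_H\Vert$. By the triangle inequality $\bigl|\Vert\mu\Vert-\Vert\mu_H\Vert\bigr|\leq\Vert\mu-\mu_H\Vert\ll\eps^c\Vert\mu\Vert$, so for $\eps$ sufficiently small $\Vert\mu\Vert$ and $\Vert\mu_H\Vert=|H|^{-1/2}$ agree up to a factor $1+O(\eps^c)$; in particular $\Vert\mu-\mu_H\Vert\ll\eps^c\Vert\mu\Vert\ll\eps^c\Vert\mu_H\Vert$, which is the first claimed conclusion. For the claim $\mu(H)\geq1-O(\eps^c)$ I would use the exact identity
\[ \Vert\mu-\mu_H\Vert^2=\sum_{x\in H}\big(\mu(x)-\tfrac{1}{|H|}\big)^2+\sum_{x\notin H}\mu(x)^2=\Vert\mu\Vert^2-\frac{2\mu(H)-1}{|H|}, \]
which rearranges to $2\mu(H)-1=|H|\Vert\mu\Vert^2-|H|\Vert\mu-\mu_H\Vert^2$. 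Since $|H|\Vert\mu\Vert^2\geq1-O(\eps^c)$ and $|H|\Vert\mu-\mu_H\Vert^2\ll\eps^{2c}$ by the norm comparison just established, this yields $2\mu(H)-1\geq1-O(\eps^c)$, i.e.\ $\mu(H)\geq1-O(\eps^c)$.

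Finally, for the support bound: any $x\in H$ with $\mu(x)=0$ contributes exactly $|H|^{-2}$ to $\Vert\mu-\mu_H\Vert^2$, so the number of such $x$ is at most $|H|^2\Vert\mu-\mu_H\Vert^2\ll\eps^{2c}|H|$, whence $|\Supp(\mu)|\geq|\Supp(\mu)\cap H|\geq(1-O(\eps^{2c}))|H|\geq(1-O(\eps^c))|H|$. This completes the argument. The only point that requires a little care is the bootstrapping in the second paragraph, replacing $\Vert\mu\Vert$ by $\Vert\mu_H\Vert$ on both sides of the estimate and noting the two norms are comparable; beyond that everything is a routine computation, and all of the real work is carried out in Lemma \ref{stable-young}, so I do not anticipate any genuine obstacle here.
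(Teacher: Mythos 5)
Your proof is correct and takes essentially the same approach as the paper: apply Lemma \ref{stable-young}, compare $\Vert\mu\Vert$ with $\Vert\mu_H\Vert$ via the triangle inequality, and then extract $\mu(H)$ from the exact identity $\Vert\mu-\mu_H\Vert^2=\Vert\mu\Vert^2-(2\mu(H)-1)/|H|$, which is precisely the polarization identity the paper expresses via $\langle\mu,\mu_H\rangle$. The only cosmetic difference is the support bound, which you obtain by counting directly the points of $H$ where $\mu$ vanishes (each contributing $|H|^{-2}$ to $\Vert\mu-\mu_H\Vert^2$), whereas the paper uses Cauchy--Schwarz on $\langle\mu,\mu_H\rangle$; both are routine and equally short.
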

\begin{proof}
Let $H$ be as in the conclusion of Lemma \ref{stable-young}. The first statement is just the conclusion of Lemma \ref{stable-young}. From it we deduce
\begin{equation}\label{eq-c-2} \Vert \mu \Vert =(1 + O(\eps^c))\Vert \mu_H \Vert = (1 + O(\eps^c)) |H|^{-1/2}.\end{equation} In particular, 
\begin{equation}\label{eq-c-3} \langle \mu, \mu_H\rangle = \frac{1}{2} \big( \Vert \mu \Vert^2 + \Vert \mu_H \Vert^2 - \Vert \mu - \mu_H\Vert^2) = (1 + O(\eps^c))|H|^{-1} .\end{equation}
Now we have $\langle \mu, \mu_H\rangle = |H|^{-1} \mu(H)$, and so the second statement follows.

For the third statement, write $A := \Supp(\mu) \cap H$. By Cauchy-Schwarz we have
\[ \langle \mu, \mu_H\rangle = \frac{1}{|H|} \sum_x \mu(x) 1_A(x) \leq \frac{1}{|H|} \Vert \mu \Vert |A|^{1/2}.\]
The required bound now follows by comparing this with \eqref{eq-c-2} and \eqref{eq-c-3}.
\end{proof}

\end{document}